\DeclarePairedDelimiter{\abs}{\lvert}{\rvert}
\DeclarePairedDelimiter{\norm}{\lVert}{\rVert}
\newcommand{\numberset}{\mathbb}
\newcommand{\R}{\numberset{R}}
\newcommand{\Z}{\numberset{Z}}
\newcommand{\C}{\numberset{C}}
\newcommand{\sphere}{\mathbb{S}}
\newcommand{\Ham}{\text{Ham}}
\newcommand{\Symp}{\text{Symp}}
\newcommand{\Sym}{\text{Sym}}
\newcommand{\Diff}{\text{Diff}}
\newcommand{\D}{\mathbb{D}}
\newtheorem{thm}{Theorem}[section]
\newtheorem{lem}[thm]{Lemma}
\newtheorem{prp}[thm]{Proposition}
\newtheorem{cor}[thm]{Corollary}
\newtheorem{defn}[thm]{Definition}
\newtheorem{rem}[thm]{Remark}
\newcommand*{\@old@slash}{}\let\@old@slash\slash
\def\slash{\relax\ifmmode\delimiter"502F30E\mathopen{}\else\@old@slash\fi}
\def\bign#1{\mathclose{\hbox{$\left#1\vbox to8.5\p@{}\right.\n@space$}}\mathopen{}}
\def\Bign#1{\mathclose{\hbox{$\left#1\vbox to11.5\p@{}\right.\n@space$}}\mathopen{}}
\def\biggn#1{\mathclose{\hbox{$\left#1\vbox to14.5\p@{}\right.\n@space$}}\mathopen{}}
\def\Biggn#1{\mathclose{\hbox{$\left#1\vbox to17.5\p@{}\right.\n@space$}}\mathopen{}}
\title{Hofer Energy and Link Preserving Diffeomorphisms in Higher Genus}
\author{Francesco Morabito, Ibrahim Trifa}
\begin{document}

\maketitle
\begin{abstract}
    Given a pre-monotone Lagrangian link, we obtain Hofer energy estimates for Hamiltonian diffeomorphisms preserving it. Such estimates depend on the braid type of the Hamiltonian diffeomorphism only, and the natural language to talk about this phenomenon is provided by a family of norms on braid groups for surfaces with boundary.
    This generalises the results obtained by the first author to higher genus surfaces with boundary.
\end{abstract}

\tableofcontents
\section{Introduction}
Let $\Sigma_{g, p}$ be a compact surface with genus $g\geq 0$ and $p\geq 1$ boundary components (or punctures: we will use the two notions interchangeably since the content of the present paper is not affected by the difference). If $\Sigma_{g, p}$ is oriented, which will be the case throughout this paper, it is symplectic: there exists an area form $\omega$. We renormalise $\omega$ in such a way that $\int_{\Sigma_{g, p}}\omega=1$. Given a smooth function $H\in\mathcal{C}_0^\infty(\sphere^1_t\times\Sigma_{g, p}; \R)$ compactly supported in the interior of the surface for all times $t$, one may define its Hamiltonian vector field $X_{H_t}$ by\begin{equation*}
    \omega(X_{H_t}, \cdot)=-dH_t
\end{equation*}
The function $H$ one uses to define the Hamiltonian vector field is itself called Hamiltonian. The set of time 1-flows of such vector fields is a group, bearing the name of group of compactly-supported Hamiltonian diffeomorphisms of $\Sigma_{g, p}$. We denote it by $\Ham_c(\Sigma_{g, p})$. If $H$ is a Hamiltonian, we write $\phi^1_H$ for its time 1-map.

The group $\Ham_c(\Sigma_{g, p})$ comes with a natural metric on it, defined by Helmut Hofer in \cite{hof90}, as follows. For any smooth Hamiltonian $H$ on $\Sigma_{g, p}$ we define its oscillation
\begin{equation*}
    \norm{H}_{(1, \infty)}:=\int_0^1\left\{\max_{x\in M}H_t(x)-\min_{x\in M}H_t(x)\right\} dt
\end{equation*}
Let $\varphi\in \Ham_c(\Sigma_{g, p})$: its Hofer norm is defined then as
\begin{equation*}
    \norm{\varphi}:=\inf_{H\vert \varphi=\phi^1_H}\norm{H}_{(1, \infty)}
\end{equation*}

It is by now classical that $\norm{\cdot}$ defines a norm (see \cite{hof90}, \cite{lalMcD95},\cite{pol93}), and its very definition implies that $\norm{\cdot}$ is invariant under conjugation by symplectic diffeomorphisms (diffeomorphisms $\psi$ of the manifold such that $\psi^*\omega=\omega$). We define the Hofer metric extending $\norm{\cdot}$ by bi-invariance:
\begin{equation*}
    \text{If }\varphi, \varphi'\in\Ham_c(\Sigma_{g, p}),\,\,\, d_H(\varphi, \varphi'):=\norm{\varphi(\varphi')^{-1}}
\end{equation*}
It is rather complicated to provide examples of Hamiltonian diffeomorphisms with large Hofer norms. In principle, one expects Hofer energy to measure how ``large'' sets are moved around the symplectic manifold, and how complex their motion is. The first author of the present article constructed in \cite{M} a class of compactly supported Hamiltonian diffeomorphisms of the disc whose Hofer energy satisfies some simple estimates from below. The goal is now to extend these estimates to higher genus symplectic surfaces.
\subsection{Braid group on surfaces with genus and boundary}
Geometrically speaking, the braid group can be seen as the fundamental group of the configuration space of the surface $\Sigma_{g, p}$. Let us fix $k\geq 1$ distinct points on $\Sigma_{g, p}$, $P_1,\dots, P_k$, and consider the cartesian product $\left(\Sigma_{g, p}\right)^{k}$. The permutation group on $k$ letters, $\mathfrak{S}_k$
, acts on $\left(\Sigma_{g, p}\right)^{k}$, let $\Sym^k\left(\Sigma_{g, p}\right)$ be the quotient. Denote by $\Delta$ the image of the points in $\left(\Sigma_{g, p}\right)^{k}$ for which at least two coordinates coincide. We define $\mathcal{B}_{k,g, p}$ as the fundamental group of the $k$-th (unordered) configuration space of $\Sigma_{g, p}$, with base point $\{P_1,...,P_k\}$:
\begin{equation*}
    \mathcal{B}_{k, g, p}:=\pi_1\left(\Sym^k\left(\Sigma_{g, p}\right)\setminus \Delta\right)
\end{equation*}
If $g=0, p=1$ (i.e. $\Sigma_{g, p}$ is the disc) we may describe this group via a presentation containing $k-1$ generators which obey the so-called braid relations:\begin{equation*}
\mathcal{B}_{k, 0, 1} = \left \langle \sigma_1, \dots, \sigma_{k-1} \middle\vert
\begin{aligned}
 \sigma_i\sigma_j & = \sigma_j\sigma_i : |i-j| > 1 \\
 \sigma_i\sigma_{i+1} \sigma_i & = \sigma_{i+1} \sigma_i\sigma_{i+1} : 1 \leq i\leq k-2
\end{aligned}
\right\rangle
\end{equation*}
In such a case we have a remarkable homomorphism $\mathcal{B}_{k, 0, 1}\rightarrow\Z$, the linking number, obtained by sending every $\sigma_i$ to 1: this map will be denoted ``$\mathrm{lk}$'' throughout the paper. Note that each $\sigma_i$ corresponds geometrically to swapping $P_i$ and $P_{i+1}$ by a half-twist.

If however $g\geq 1, p\geq 1$, the presentation gets substantially more complicated. We are going to report here the results we need from \cite{bel04}. Let us fix $k$ distinct base points on $\Sigma_{g, p}$. There exist then four families of generators (for a picture, see Figure \ref{fig:generators}):
\begin{itemize}
    \item $\sigma_1, \dots, \sigma_{k-1}$: they correspond to the generators of $\mathcal{B}_{k, 0, 1}$, i.e. to half-twist swapping two of the base points, in such a way that the images of the paths are contained in a disc on the surface;
    \item $a_1, \dots, a_g$, $b_1, \dots, b_g$: they are obtained by homologically independent loops based at the first base point, $P_1$ (the other base points are instead fixed throughout the path). One way to describe them is seeing $\Sigma_{g, p}$ as the connected sum of $g$ tori with $p$ punctures, so that the loops $a_i$ and $b_i$ represent the generators of the homology of the $i$-th torus;
    \item $z_1, \dots, z_{p-1}$: they correspond to loops based at $P_1$ and winding around the $i-th$ puncture exactly once (here as well the other base points are fixed). Remark that there are $p-1$ generators, but $p$ punctures. A loop around the last puncture may in fact be written as composition of the others.
\end{itemize}
The relations in this group are rather complex, and we are not going to report them here to keep the presentation lean. The interested reader will find a detailed account in \cite[Theorem 1.1]{bel04}.
\subsection{The main result}
Let $\underline{L}=L_1,\dots, L_{k+g}$ be a set of $k+g$ disjoint, embedded circles on $\Sigma_{g, p}$: we call $\underline{L}$ a Lagrangian link.
\begin{defn}\label{defn:pre-monLink}
    We say that $\underline{L}$ is premonotone if the following conditions are satisfied:
\begin{itemize}
\item[\textit{i})] exactly $g$ of the circles in $\underline{L}$ are non contractible;
\item[\textit{ii})] $\Sigma_{g, p}\setminus \bigcup_{i=1}^{k+g}L_i$ is a disjoint union of $k$ discs $(B_j)_{j=1,\dots, k}$ and a pair of pants $B_{k+1}$ with $p+k+2g-1$ legs, i.e. a genus-0 surface with $p+k+2g$ ends; 
\item[\textit{iii})] There exist $\lambda\in \left(\frac{1}{k+1}, \frac{1}{k}\right)$, such that $\lambda=\int_{B_j}\omega$ for $j=1,\dots, k$.
\end{itemize}
\end{defn}
\begin{rem}
    The choice of the adjective ``premonotone'' is motivated by the fact that such Lagrangian configurations define monotone Lagrangian tori in symmetric products after the surface with boundary is embedded in a closed surface with same genus. The monotonicity of this kind of tori does not follow from previous results in \cite{CGHMSS22} and \cite{M}, and it is an element of novelty which appears to be one of the main ingredients for the success of the proof.
\end{rem}

Given a premonotone Lagrangian link, we denote by $\Ham_{\underline{L}}(\Sigma_{g, p})$ the subgroup of $\Ham_c(\Sigma_{g, p})$ stabilising the Lagrangian link as a set:
\begin{equation*}
\Ham_{\underline{L}}(\Sigma_{g, p}):=\Set{\varphi\in \Ham_c(\Sigma_{g, p})\vert \exists\sigma\in\mathfrak{S}_{k+g}, \varphi(L_i)=L_{\sigma(i)}}
\end{equation*}
To a diffeomorphism in $\Ham_{\underline{L}}(\Sigma_{g, p})$ we may associate an element of the braid group $\mathcal{B}_{k, g, p}$. This function is defined the following way: choose one base point per contractible circle in $\underline{L}$, denote it by $P_i\in L_i$ for $i=1, \dots, k$. Any Hamiltonian isotopy $(\varphi_t)_{t\in [0, 1]}$ between the identity and $\varphi\in \Ham_{\underline{L}}(\Sigma_{g, p})$ provides then a collection of $k$ curves $t\mapsto \varphi_t(P_i)$. Remark that there exists a permutation $\sigma$ in $\mathfrak{S}_{k+g}$ such that for all $i$, $\varphi(L_i)=L_{\sigma(i)}$, and contractible circles are mapped to contractible circles. For each contractible $L_i$, choose then a path in $L_{\sigma(i)}$ connecting $\varphi(P_i)$ to $P_{\sigma(i)}$. The concatenation of the curves $t\mapsto \varphi_t(P_i)$ with the respective connecting paths yields an element in $\mathcal{B}_{k, g, p}$. The braid obtained this way does not depend on the choice of the base points, of connecting paths or of the Hamiltonian isotopy (a proof of this fact is included later). We thus have defined a group homomorphism\begin{equation*}
b: \Ham_{\underline{L}}(\Sigma_{g, p})\rightarrow \mathcal{B}_{k, g, p}
\end{equation*}
Denote by $\mathcal{B}_{\underline{L}}$ the image of $b$, which will be described in detail in Section \ref{sec:image}. In particular, we show:
\begin{prp}
    $\mathcal{B}_{\underline L}$ is isomorphic to $\mathcal B_{k,0,p+2g}$.
\end{prp}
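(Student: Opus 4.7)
The plan is to identify $\mathcal{B}_{k,0,p+2g}$ with the $k$-strand braid group of the open subsurface $S := \Sigma_{g,p} \setminus \bigcup_{j=1}^{g} \overline{N}(L_{k+j})$ obtained by excising open tubular neighbourhoods of the $g$ non-contractible components of $\underline{L}$. An Euler characteristic computation combined with item (ii) of Definition \ref{defn:pre-monLink} shows that $S$ has genus zero and $p + 2g$ boundary components, so that $\pi_1(\Sym^k(S) \setminus \Delta) \cong \mathcal{B}_{k,0,p+2g}$. Placing the base points $P_1, \dots, P_k$ in $S$, the inclusion $S \hookrightarrow \Sigma_{g,p}$ induces a homomorphism $\iota \colon \mathcal{B}_{k,0,p+2g} \to \mathcal{B}_{k,g,p}$, and the proposition reduces to showing that $\iota$ is injective with image exactly $\mathcal{B}_{\underline{L}}$.

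The containment $\iota(\mathcal{B}_{k,0,p+2g}) \subseteq \mathcal{B}_{\underline{L}}$ follows by direct construction: given a braid $\beta$ represented by $k$ disjoint paths in $S$ from the $P_i$ to the $P_{\sigma(i)}$, I would build a compactly supported Hamiltonian isotopy on $\Sigma_{g,p}$ whose trace on the base points realises $\beta$, by thickening the paths to ribbons containing the contractible discs $B_i$ (all of area $\lambda$ by item (iii)) and transporting them along these ribbons via a flow whose support lies entirely in $S$. Its time-$1$ map lies in $\Ham_{\underline{L}}(\Sigma_{g,p})$, fixes each non-contractible Lagrangian pointwise, and has braid $\iota(\beta)$. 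Injectivity of $\iota$ I would derive from the $\pi_1$-injectivity of $\Sym^k(S) \setminus \Delta \hookrightarrow \Sym^k(\Sigma_{g,p}) \setminus \Delta$, using the Fadell--Neuwirth fibration and the fact that $\pi_1(S) \hookrightarrow \pi_1(\Sigma_{g,p})$ is injective (as $S$ is obtained by cutting along essential, non-separating simple closed curves).

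The main obstacle is the reverse containment $\mathcal{B}_{\underline{L}} \subseteq \iota(\mathcal{B}_{k,0,p+2g})$. Given $\varphi \in \Ham_{\underline{L}}(\Sigma_{g,p})$ with isotopy $\varphi_t$, the trajectories $t \mapsto \varphi_t(P_i)$ may cross the non-contractible Lagrangians, placing $b(\varphi)$ a priori outside $\iota(\mathcal{B}_{k,0,p+2g})$ (it could involve the generators $a_j, b_j$). The strategy is to produce, for each such $\varphi$, a factorisation $\varphi = \psi \circ \varphi'$ up to Hamiltonian isotopy within $\Ham_{\underline{L}}(\Sigma_{g,p})$, where $\psi$ is supported in a small tubular neighbourhood of $\bigcup_j L_{k+j}$ disjoint from the $P_i$ and realises the permutation $\varphi$ induces on the non-contractible components, while $\varphi'$ is supported entirely in $S$. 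Since the isotopy of $\psi$ can be chosen with support disjoint from the base points, $b(\psi) = 1$, whence $b(\varphi) = b(\varphi') \in \iota(\mathcal{B}_{k,0,p+2g})$. Producing this factorisation is the crux, and I expect it to follow from a Moser-type normalisation leveraging the rigidity built into premonotonicity: item (iii) pins down the areas of the $B_i$ and thereby constrains the symplectic normal form of $\varphi$ near $\bigcup_j L_{k+j}$, while the pair-of-pants structure of $\Sigma_{g,p} \setminus \bigcup_i L_i$ rigidifies which permutations on non-contractible components can actually occur. Verifying that this normalisation can be performed by a Hamiltonian, rather than merely symplectic, isotopy which stays in $\Ham_{\underline{L}}(\Sigma_{g,p})$ at every time is the subtlest technical point.
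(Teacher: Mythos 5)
Your overall geometric framing agrees with the paper's: both identify $\mathcal{B}_{k,0,p+2g}$ with the $k$-strand braid group of the planar subsurface $S$ obtained by excising tubular neighbourhoods of the $g$ non-contractible components, and both prove the containment $\iota(\mathcal{B}_{k,0,p+2g})\subseteq\mathcal{B}_{\underline L}$ by explicitly realising generators with Hamiltonian diffeomorphisms supported away from those components. Your treatment of the injectivity of $\iota$ (via $\pi_1$-injectivity of the subsurface inclusion on configuration spaces) is actually more explicit than the paper's, which essentially takes this identification for granted.

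The gap is in the reverse containment, which is where the content lies. The paper's mechanism is Hamiltonian non-displaceability of the homologically essential circles $L_{k+1},\dots,L_{k+g}$: since $\phi^t(L_{k+j})\cap L_{k+j}\neq\emptyset$ for every $t$ along any generating isotopy, one can correct the isotopy by a loop $\psi^t$ in $\Ham_c(\Sigma_{g,p})$ so that $\psi^t\phi^t(L_{k+j})=L_{k+j}$ for all $t$ and all $j$; the base-point trajectories then never cross the non-contractible circles, so the braid is manifestly carried by $S$. You never invoke non-displaceability, yet it is doing essential work even inside your own setup: it is the reason $\varphi$ cannot permute the non-contractible components (a diffeomorphism supported in a disjoint union of annuli around the $L_{k+j}$, as your $\psi$ is, preserves each annulus and hence cannot realise any non-trivial permutation, so your factorisation already presupposes that the permutation is trivial). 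The factorisation $\varphi\simeq\psi\circ\varphi'$ is moreover asserted rather than proven, and the ingredients you point to do not supply it: item (iii) of premonotonicity constrains the areas of the contractible discs $B_i$ and says nothing about the germ of $\varphi$ along $\bigcup_j L_{k+j}$, and the pair-of-pants structure of the complement is not what forbids permutations of the non-contractible components. Finally, even granting the factorisation, concluding $b(\varphi')\in\iota(\mathcal{B}_{k,0,p+2g})$ requires a generating isotopy of $\varphi'$ whose base-point trajectories stay in $S$; a diffeomorphism that is Hamiltonian in $\Sigma_{g,p}$ and supported in $S$ need not be Hamiltonian \emph{in} $S$ (the restriction map $H^1_c(S;\R)\to H^1_c(\Sigma_{g,p};\R)$ has a kernel, so there is a potential flux obstruction), and this needs an argument. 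The paper's isotopy-correction route sidesteps all of these issues at once.
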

\begin{rem}
If $g\geq 1$, the braid type homomorphism $b$ cannot be surjective. Indeed, any non contractible circle in $\underline{L}$ is Hamiltonianly non-displaceable, and therefore has to be mapped to itself by any element in $\Ham_{\underline{L}}(\Sigma_{g, p})$. In the genus 0 case $b$ is surjective, see \cite{M}.
\end{rem}
Our main result is the following:

\begin{thm}\label{thm:braidPersistenceGenus}
There exists a family $(\mathfrak f_{(v_1, v_2)})_{v_i\in V}$ of group homomorphisms $\mathcal{B}_{\underline{L}}\rightarrow \R$ indexed on the square of a $p$-dimensional simplex $V$ (defined in Section \ref{sec:constrFunction}) such that for every $v_i\in V$ and $\varphi, \psi\in \Ham_{\underline{L}}(\Sigma_{g, p})$,
\begin{equation}
d_H(\varphi, \psi)\geq \frac 1 2 \abs{\mathfrak f_{(v_1, v_2)}(b(\varphi\psi^{-1}))}.
\end{equation}
In particular, we have the following estimate for the Hofer norm:
\begin{equation}
    \norm{\varphi}\geq \frac 1 2 \sup\limits_{(v_1, v_2)\in V^2}\abs{\mathfrak{f}_{(v_1, v_2)}(b(\varphi))}.
\end{equation}
Let us define the function on the image of $b$
\begin{equation}\label{eq:defnMathfrF}
    \mathfrak f(g)=\max_{(v_1, v_2)\in V^2}\abs{\mathfrak f_{(v_1, v_2)}(g)}.
\end{equation}
In the case of the generators $\sigma_j$, $a_i$, $b_i^{-1}a_ib_i$ we then obtain the explicit values:
\begin{equation*}
    \mathfrak f(\sigma_j)=\frac{1}{2}\mathfrak f(a_i)= \frac{1}{2}\mathfrak f(b_i^{-1}a_ib_i)=\frac{1}{2(k+g)}\frac{(k+1)\lambda-1}{k+2g-1}.
\end{equation*}
\end{thm}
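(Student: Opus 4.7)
The plan is to construct the homomorphisms $\mathfrak f_{(v_1,v_2)}$ via Lagrangian spectral invariants associated to a monotone torus in a symmetric product, and then extract the Hofer lower bound from the Lipschitz property of these invariants. The strategy parallels the genus 0 argument of \cite{M} and \cite{CGHMSS22}, but the new ingredient (already emphasised in the remark following Definition \ref{defn:pre-monLink}) is the monotonicity of the relevant torus in $\Sym^{k+g}(\overline{\Sigma}_{g})$, where $\overline{\Sigma}_{g}$ denotes a closed surface into which $\Sigma_{g,p}$ embeds.

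\textbf{Step 1: symmetric product and monotone torus.} I would first embed $\Sigma_{g,p}$ into a closed genus $g$ surface $\overline\Sigma_g$ and form the symmetric product $\Sym^{k+g}(\overline\Sigma_g)$. The product $\mathbb T = L_1\times\cdots\times L_{k+g}$ descends to a Lagrangian torus $\mathbb T\subset \Sym^{k+g}(\overline\Sigma_g)$. Using the area condition \textit{iii}) of Definition \ref{defn:pre-monLink} together with the fact that exactly $g$ of the $L_i$ are non-contractible, I would check that the Maslov index and symplectic area of every Maslov 2 disc are proportional: this is the monotonicity of $\mathbb T$. Any $\varphi\in\Ham_{\underline L}(\Sigma_{g,p})$ lifts to a Hamiltonian diffeomorphism $\Sym^{k+g}(\varphi)$ of the symmetric product preserving $\mathbb T$ setwise.

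\textbf{Step 2: definition of the homomorphisms.} The parameter space $V$ will be the $p$-dimensional simplex of admissible weight vectors on the boundary/puncture classes (one coordinate per puncture, the last determined by a normalisation). Each $v\in V$ specifies a flat $\R$-valued $1$-cocycle, hence a local system $\rho_v$ on $\mathbb T$. For a pair $(v_1,v_2)$ I would use the Lagrangian Floer/quantum homology $HF^*(\mathbb T,\rho_{v_1};\mathbb T,\rho_{v_2})$ and the associated spectral invariants (Leclercq--Zapolsky, adapted to local systems) to define
\begin{equation*}
    \mathfrak f_{(v_1,v_2)}(\varphi) := c\bigl([\mathbb T];\Sym^{k+g}(H),\rho_{v_1},\rho_{v_2}\bigr) - c\bigl([\mathbb T];0,\rho_{v_1},\rho_{v_2}\bigr),
\end{equation*}
where $H$ is any Hamiltonian generating $\varphi$. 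Monotonicity and the fact that $\Sym^{k+g}(H)$ integrates to zero on $\mathbb T$ make this a homomorphism of $\Ham_{\underline L}$; a standard ``homotopy equivalent Hamiltonians give equal spectral values'' argument based on the isotopy class of the induced path of $\mathbb T$'s will then show that $\mathfrak f_{(v_1,v_2)}$ descends to a homomorphism on $\mathcal B_{\underline L}$.

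\textbf{Step 3: the Hofer estimate.} The Lipschitz property of Lagrangian spectral invariants with respect to the Hofer norm gives
\begin{equation*}
    \abs{c([\mathbb T];\Sym^{k+g}(H),\rho_{v_1},\rho_{v_2}) - c([\mathbb T];\Sym^{k+g}(K),\rho_{v_1},\rho_{v_2})} \leq \norm{H-K}_{(1,\infty)},
\end{equation*}
and $\norm{\Sym^{k+g}(H)}_{(1,\infty)}\leq \norm{H}_{(1,\infty)}$ (up to the factor accounting for normalisation: the max--min oscillation on the symmetric product coincides with that on the surface). Taking infimum over generating Hamiltonians and using the triangle inequality for $d_H$ produces the factor $1/2$ in the bound, exactly as in \cite{M}.

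\textbf{Step 4: values on generators.} For $\sigma_j$, isotope $\mathbb T$ along the explicit half-twist supported in a disc containing two adjacent base points and compute the Maslov 2 disc count weighted by the local systems; this reduces to a computation on a symmetric product of a disc with three boundary arcs, and the factor $\frac{(k+1)\lambda-1}{k+2g-1}$ will emerge as the area difference between the two small discs exchanged. For $a_i$ (a non separating loop), the braid drags $P_1$ along $a_i$, and the spectral shift equals the $\rho_{v_2}-\rho_{v_1}$ pairing against the homology class of $a_i$ relative to $\mathbb T$; the analogous computation gives the claimed doubling. The conjugated generator $b_i^{-1}a_ib_i$ has the same homological effect, hence equals $\mathfrak f(a_i)$.

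\textbf{Main obstacle.} The hardest part is Step 1 combined with the well-definedness in Step 2: the monotonicity of $\mathbb T$ inside $\Sym^{k+g}(\overline\Sigma_g)$ does not reduce to the results of \cite{CGHMSS22} or \cite{M} because the non-contractible $L_i$ force one to analyse Maslov indices of discs whose boundary wraps around non-trivial cycles. Proving that the spectral invariant defined from this torus really descends to a braid-type invariant (in particular is insensitive to the choice of connecting paths in $L_{\sigma(i)}$ and to the Hamiltonian isotopy) is where most of the technical content lives.
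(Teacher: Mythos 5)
Your overall architecture (spectral invariants of a monotone torus in the symmetric product, the Hofer--Lipschitz property, explicit computations on generators) matches the paper, and your identification of the monotonicity of $\Sym^{k+g}(\underline L)$ as the key new technical input is exactly right. However, the mechanism you propose in Step 2 does not produce the invariants of the theorem, for two reasons. First, a flat $\R$-valued local system $\rho_v$ on the Lagrangian torus only pairs with the class in $H_1(\Sym^{k+g}(\underline L))$ of the boundary of a capping; it is blind to the intersection number $[\hat y]\cdot\Delta$ of the capping with the diagonal, which is precisely the quantity that records the braiding and carries the coefficient $\eta=\frac{(k+1)\lambda-1}{2(k+2g-1)}$ appearing in the stated values on $\sigma_j$, $a_i$, $b_i^{-1}a_ib_i$. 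In the paper this term enters the action functional as a bulk term $-\eta[\hat y]\cdot\Delta$, and the way $\eta$ is varied is geometric: one glues discs of areas $v_i=(s_{i,1},\dots,s_{i,p})$ to the $p$ boundary components, obtaining two closed completions $\Sigma_g(1+s_i)$ on which the link is monotone with the same $\lambda$ but different $\eta_{s_i}$, and one sets $f_{v_1,v_2}=c_{\underline L_{v_1}}-c_{\underline L_{v_2}}$. Your parameter space $V$ has the right shape but the wrong role. (Your guess that $\frac{(k+1)\lambda-1}{k+2g-1}$ arises as ``the area difference between the two small discs exchanged'' cannot be right: all the discs $B_j$ have the same area $\lambda$ by Definition \ref{defn:pre-monLink}.)

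Second, the quantity $c([\mathbb{T}];\Sym^{k+g}(H),\rho_{v_1},\rho_{v_2})-c([\mathbb{T}];0,\rho_{v_1},\rho_{v_2})$ is a single spectral invariant normalised at $H=0$; spectral invariants are subadditive but not additive, so this is not a group homomorphism, and neither monotonicity nor the vanishing of $\Sym^{k+g}(H)$ on the torus repairs that. The homomorphism property in the paper comes from differencing two spectral invariants attached to the two completions: the biholomorphism between $\Sigma_g(1+s_1)$ and $\Sigma_g(1+s_2)$ induces a chain isomorphism whose failure to respect the action filtration is exactly $c_{\underline L_{v_1}}-c_{\underline L_{v_2}}$; this difference can be computed on any single generator (Lemma \ref{lem:action_difference}), the Hamiltonian terms cancel, and what remains is additive under concatenation of cappings, whence the factorisation through $b$. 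Finally, your assertion that $\mathfrak f(b_i^{-1}a_ib_i)=\mathfrak f(a_i)$ because the two braids ``have the same homological effect'' is the wrong reason: for each fixed $(v_1,v_2)$ one has $\mathfrak f_{v_1,v_2}(b_i^{-1}a_ib_i)=-\mathfrak f_{v_1,v_2}(a_i)$, because the corresponding cappings cross the diagonal with opposite signs; the two values of $\mathfrak f$ agree only after taking the absolute value and the maximum over $V^2$.
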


\begin{rem}
    We compute in (\ref{eqn:f}) explicit values for the maximum in \ref{eq:defnMathfrF}
\end{rem}

\begin{rem}
This result seems counter-intuitive when compared to a result of Khanevsky's contained in \cite{kha15}. In that paper, Khanevsky defines the notion of homological trajectory of a Hamiltonian diffeomorphism defined on a surface and fixing a given disc. He then proceeds to prove that, whenever the genus of the surface is positive, there exists a constant only depending on the disc such that one can realise any trajectory with a Hamiltonian diffeomorphism of Hofer energy lower than this threshold. In fact, it is easily seen that such diffeomorphisms cannot stabilise all non-contractible component of the Lagrangian link $\underline{L}$, therefore they do not appear in our treatment. If anything, this shows that to find a result like ours, the constraint of fixing a number of non-contractible circles equal to the genus is in fact minimal (otherwise it is possible to construct those diffeomorphisms, realising arbitrarily complex braids with bounded Hofer energy).
\end{rem}
We proceed now to define a norm on $\mathcal{B}_{\underline L}$. For any $g\in \mathcal{B}_{\underline L}$ we set
\begin{equation}
    \norm{g}_{\underline{L}}:=\inf_{\varphi\in\Ham_{\underline{L}}(\Sigma_{g, p}), b(\varphi)=g}\norm{\varphi}
\end{equation}
It is easy to check that $\norm{\cdot}$ is a pseudonorm on $\mathcal{B}_{\underline L}$.

A corollary of the main result from this paper is an estimate from below of $\norm{\cdot}_{\underline L}$:

\begin{cor}\label{cor:hofNormBraids}
Let $g\in \mathcal{B}_{\underline{L}}$. Then, if $\mathfrak f$ is the function from the main result,
\begin{equation}
\norm{g}_{\underline L}\geq \frac{1}{2}\cdot\abs{\mathfrak{f}(g)}
\end{equation}
\end{cor}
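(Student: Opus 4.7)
The plan is to deduce this corollary directly from Theorem \ref{thm:braidPersistenceGenus} by taking an infimum over Hamiltonian diffeomorphisms realising the given braid. The content beyond the main theorem is purely formal: I would not need any new symplectic input, only the compatibility between the definition of $\norm{\cdot}_{\underline{L}}$ as an infimum and the pointwise bound provided by the theorem.

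More concretely, let $g\in \mathcal{B}_{\underline{L}}$ and pick any $\varphi\in \Ham_{\underline{L}}(\Sigma_{g,p})$ with $b(\varphi)=g$ (such a $\varphi$ exists by definition of $\mathcal{B}_{\underline{L}}$ as the image of $b$). The second inequality of Theorem \ref{thm:braidPersistenceGenus} yields
\begin{equation*}
    \norm{\varphi}\geq \tfrac{1}{2}\sup_{(v_1,v_2)\in V^2}\abs{\mathfrak{f}_{(v_1,v_2)}(b(\varphi))}=\tfrac{1}{2}\sup_{(v_1,v_2)\in V^2}\abs{\mathfrak{f}_{(v_1,v_2)}(g)}=\tfrac{1}{2}\mathfrak{f}(g),
\end{equation*}
using only that $b(\varphi)=g$ and the definition (\ref{eq:defnMathfrF}) of $\mathfrak{f}$. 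Since the right-hand side depends only on $g$ and not on the chosen lift $\varphi$, I would then pass to the infimum over all such lifts:
\begin{equation*}
    \norm{g}_{\underline{L}}=\inf_{\substack{\varphi\in\Ham_{\underline{L}}(\Sigma_{g,p})\\ b(\varphi)=g}}\norm{\varphi}\geq \tfrac{1}{2}\mathfrak{f}(g)=\tfrac{1}{2}\abs{\mathfrak{f}(g)},
\end{equation*}
the last equality being a tautology because $\mathfrak{f}(g)$ is defined as a maximum of absolute values and is therefore already non-negative.

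There is essentially no obstacle to this argument; the only mild thing to check is that $\mathcal{B}_{\underline{L}}$ is truly the image of $b$, so that the infimum is taken over a nonempty set and $\norm{g}_{\underline{L}}$ is finite. This is already built into the definition of $\mathcal{B}_{\underline{L}}$ preceding the statement of the theorem, so no additional work is required. All the substantive content, namely the construction of the homomorphisms $\mathfrak{f}_{(v_1,v_2)}$ and the quasimorphism-style lower bound on $\norm{\varphi}$, is packaged inside Theorem \ref{thm:braidPersistenceGenus}, and the corollary is merely its descent to the quotient pseudonorm on the braid group.
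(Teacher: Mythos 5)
Your argument is correct and is exactly the (implicit) proof in the paper: the corollary follows by applying the second inequality of Theorem \ref{thm:braidPersistenceGenus} to any $\varphi$ with $b(\varphi)=g$ and passing to the infimum in the definition of $\norm{\cdot}_{\underline{L}}$. No further comment is needed.
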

As in \cite{M}, this is not enough to conclude non-degeneracy as soon as $k+g\geq 2$ (as soon as the genus is positive, there are no interesting cases we can consider for $k+g\leq 2$). We may however adapt a proof by Chen contained in \cite{chen23} to show that 
\begin{thm}
    For a pre-monotone $\underline L$, the pseudonorm $\norm{\cdot}_{\underline L}$ is non-degenerate.
\end{thm}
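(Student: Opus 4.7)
The plan is to adapt Chen's argument from \cite{chen23}, which treats the disc case $g=0$, $p=1$. Corollary \ref{cor:hofNormBraids} by itself cannot yield non-degeneracy once $k+g\geq 2$: each $\mathfrak f_{(v_1,v_2)}$ is a homomorphism $\mathcal B_{\underline L}\to\R$, so the joint kernel of the family indexed by $(v_1,v_2)\in V^2$ can still be non-trivial. The remedy is to let the pre-monotone link itself vary and to show that the corresponding family of pseudonorm estimates jointly separates non-trivial elements of $\mathcal B_{\underline L}$.

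I would argue by contradiction: suppose $g\in\mathcal B_{\underline L}$ is non-trivial with $\norm g_{\underline L}=0$, and pick a sequence $\varphi_n\in\Ham_{\underline L}(\Sigma_{g,p})$ with $b(\varphi_n)=g$ and $\norm{\varphi_n}\to 0$. The first main step is to produce an auxiliary family $\{\underline L^{(\alpha)}\}$ of pre-monotone links obtained by refining the contractible part of $\underline L$, letting $k$ grow and re-choosing $\lambda\in(1/(k+1),1/k)$, together with embeddings $\iota_\alpha\colon \mathcal B_{\underline L}\hookrightarrow \mathcal B_{\underline L^{(\alpha)}}$ obtained by trivially stabilising the added base points. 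I want these links to enjoy the following separation property: for every non-trivial $g\in\mathcal B_{\underline L}$ there exist $\alpha$ and $(v_1,v_2)$ with $\mathfrak f^{(\alpha)}_{(v_1,v_2)}(\iota_\alpha(g))\neq 0$.

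The second step is a transfer of norm estimates. Since $\norm{\varphi_n}\to 0$, for $n$ large the image $\varphi_n(\underline L^{(\alpha)})$ is Hamiltonian isotopic to $\underline L^{(\alpha)}$ through an isotopy supported in a Lagrangian Weinstein neighbourhood of the added circles and of Hofer energy $o(1)$. Conjugating $\varphi_n$ by this isotopy produces $\psi_n\in \Ham_{\underline L^{(\alpha)}}(\Sigma_{g,p})$ with $b^{(\alpha)}(\psi_n)=\iota_\alpha(g)$ and $\norm{\psi_n}\to 0$. Applying Corollary \ref{cor:hofNormBraids} on $\underline L^{(\alpha)}$ and letting $n\to\infty$ forces $\mathfrak f^{(\alpha)}_{(v_1,v_2)}(\iota_\alpha(g))=0$ for every $\alpha$ and every $(v_1,v_2)$, contradicting the separation property and hence forcing $g=e$.

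The hard part will be establishing the separation property in the first step. In higher genus $\mathcal B_{\underline L}\simeq \mathcal B_{k,0,p+2g}$ is already a proper subgroup of the ambient surface braid group, and the area constraint \textit{iii}) of Definition \ref{defn:pre-monLink} leaves little room to vary $\underline L$ freely. I expect that allowing $k$ to grow while keeping $\lambda$ inside its admissible interval, combined with the residual nilpotence of surface braid groups and the explicit non-vanishing of $\mathfrak f$ on the generators $\sigma_j$, $a_i$, $b_i^{-1}a_ib_i$ computed in Theorem \ref{thm:braidPersistenceGenus}, will produce enough homomorphisms to distinguish every non-trivial braid, mirroring the template of \cite{chen23} in the disc setting.
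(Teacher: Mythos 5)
There is a genuine gap, and it sits exactly where you anticipated: the separation property in your first step cannot hold. Every $\mathfrak f^{(\alpha)}_{(v_1,v_2)}$ is an honest group homomorphism $\mathcal B_{\underline L^{(\alpha)}}\to\R$, and your stabilisation maps $\iota_\alpha$ are group homomorphisms as well, so each composite $\mathfrak f^{(\alpha)}_{(v_1,v_2)}\circ\iota_\alpha$ is an $\R$-valued homomorphism on $\mathcal B_{\underline L}$ and therefore factors through the abelianisation. Since $\mathcal B_{\underline L}\cong\mathcal B_{k,0,p+2g}$ is non-abelian in every case of interest (already $k=1$, $g\geq 1$ gives a non-abelian free group), its commutator subgroup is non-trivial and lies in the joint kernel of \emph{any} family of real-valued homomorphisms, no matter how you vary the link. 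Residual nilpotence does not rescue this: it would require invariants valued in the successive nilpotent quotients (or at least quasi-morphisms, which the $\mathfrak f$'s are not --- they are additive), whereas $\R$-valued homomorphisms only ever see the first homology of the braid group. A secondary problem is the transfer step: $\norm{\varphi_n}\to 0$ gives no $C^0$ control, so you cannot conclude that $\varphi_n(\underline L^{(\alpha)})$ returns to $\underline L^{(\alpha)}$ by an isotopy of small Hofer energy supported in a Weinstein neighbourhood; Hofer-small maps can displace circles arbitrarily far in the $C^0$ sense.

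The paper takes a different route, which is the one Chen's argument actually provides. It proves Theorem \ref{thm:non-degeneracy}: there is a uniform $\varepsilon>0$, depending only on $\underline L$, such that $d_H(\varphi,\psi)<\varepsilon$ for $\varphi,\psi\in\Ham_{\underline L}(\Sigma_{g,p})$ forces $b(\varphi)=b(\psi)$. This is not obtained from the homomorphisms $\mathfrak f_{(v_1,v_2)}$ at all, but from the Floer machinery directly: one decomposes the differential and the continuation maps according to intersection numbers with the diagonal $\Delta$ and with divisors $D_i$ centred in the glued discs, shows that for Hofer-close Hamiltonians there exist curves contributing to the piece $h_{00}$ that avoids $\Delta$ and all $D_i$, and glues such a curve to a disc inside $\Sym^{k+g}\underline L$ to manufacture a braid isotopy between representatives of $b(\varphi)$ and $b(\psi)$. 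Taking $\psi=\mathrm{id}$ yields $\norm{g}_{\underline L}\geq\varepsilon$ for every non-trivial $g$, which is the non-degeneracy. If you want to salvage your strategy, you would need to replace the homomorphisms by invariants that detect the commutator subgroup --- which is precisely what the pseudo-holomorphic braid-isotopy argument accomplishes.
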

\noindent
Chen had in particular proved this fact for braids on the disc.

Recall the existence of the real-valued Calabi morphism on $\Ham_c(\Sigma_{g, p})$ (for a definition, see for instance \cite[Section 10.3]{mcDSal16}). An immediate corollary of this result is the existence of Hamiltonian diffeomorphisms in the kernel of Calabi with non-zero asymptotic Hofer norm.

For any $\varphi\in \Ham_c(\Sigma_{g, p})$, we define its asymptotic Hofer norm by
\begin{equation*}
    \norm{\varphi}_{\infty}:=\lim_n\frac{\norm{\varphi^n}}{n}
\end{equation*}
We may now state the following elementary consequence of Theorem \ref{thm:braidPersistenceGenus}:
\begin{cor}
 Let $\varphi\in \Ham_{\underline L}(\Sigma_{g, p})$. Then
 \begin{equation*}
     \norm{\varphi}_\infty\geq \frac{1}{2}\cdot\abs{\mathfrak f (b(\varphi))}
 \end{equation*}
Moreover, any braid type in the image of $b$ may be realised by a diffeomorphism in the kernel of Calabi.
\end{cor}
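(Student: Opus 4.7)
The plan is to split the corollary into two claims and handle each independently.

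For the asymptotic inequality, I would iterate Theorem~\ref{thm:braidPersistenceGenus}. Since $b$ is a group homomorphism, $b(\varphi^n) = b(\varphi)^n$, and since each $\mathfrak f_{(v_1, v_2)}$ is a group homomorphism to $\R$, one has
\[
\mathfrak f_{(v_1, v_2)}(b(\varphi^n)) = n\, \mathfrak f_{(v_1, v_2)}(b(\varphi)).
\]
Applying the main theorem to $\varphi^n$ in place of $\varphi$ and dividing by $n$ therefore gives $\norm{\varphi^n}/n \geq \tfrac12 \abs{\mathfrak f_{(v_1, v_2)}(b(\varphi))}$; passing to the limit (which exists by subadditivity of $\norm{\varphi^n}$) and taking the supremum over $(v_1, v_2) \in V^2$ yields the desired estimate on $\norm{\varphi}_\infty$.

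For the second statement, the strategy is to correct the Calabi invariant by a modification supported away from the link, which cannot alter the braid type. Given $g \in \mathcal{B}_{\underline L}$, pick any $\varphi \in \Ham_{\underline L}(\Sigma_{g, p})$ with $b(\varphi) = g$ and set $c := \Cal(\varphi)$. Choose an embedded disc $D \subset \Sigma_{g, p} \setminus \underline{L}$ (such a disc exists since the complement of $\underline L$ contains the pair of pants $B_{k+1}$ of Definition~\ref{defn:pre-monLink}), and, using surjectivity of the Calabi morphism restricted to $\Ham_c(D)$, pick $\psi \in \Ham_c(D)$ with $\Cal(\psi) = -c$. Then $\psi$, extended by the identity, belongs to $\Ham_{\underline L}(\Sigma_{g, p})$ since its support is disjoint from $\underline L$. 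An isotopy from the identity to $\psi$ supported in $D$ fixes every base point $P_i$, so $b(\psi) = 1$; consequently $b(\varphi \psi) = b(\varphi) \cdot 1 = g$, while by additivity of Calabi we have $\Cal(\varphi \psi) = c + (-c) = 0$.

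The only point worth double-checking is that $b(\psi) = 1$ for such a $\psi$, which is immediate from the construction of $b$: the curves $t \mapsto \psi_t(P_i)$ are all constant, no connecting paths on the $L_i$ are needed (the circles are pointwise fixed), and the resulting loop in the unordered configuration space based at $\{P_1, \dots, P_k\}$ is nullhomotopic. No ingredient beyond Theorem~\ref{thm:braidPersistenceGenus}, the definition of $b$, and the standard properties of Calabi is used.
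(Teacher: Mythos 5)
Your proposal is correct and follows essentially the same route as the paper: the inequality is the immediate consequence of $b$ and the $\mathfrak f_{(v_1,v_2)}$ being group homomorphisms (so that the bound for $\varphi^n$ scales linearly in $n$), and the Calabi statement is obtained exactly as in the paper by composing with a diffeomorphism of trivial braid type and prescribed Calabi invariant, which you merely make explicit by locating its support in a disc disjoint from $\underline L$.
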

\begin{proof}
Since $\mathfrak f_{(v_1, v_2)}$ and $b$ are both morphisms of groups for all $v_i\in V$, the only statement which is not entirely trivial is the one about the kernel of Calabi. This is easily seen, since Calabi is a morphism of groups, and we may compose any element in $\Ham_{\underline L}(\Sigma_{g, p})$ by a diffeomorphism realising the trivial braid and arbitrary Calabi.
\end{proof}
\subsection{Structure of the paper}
We are going to start our paper by recalling the necessary notions from the construction of Quantitative Heegaard-Floer homology as in \cite{CGHMSS22}.
In Section \ref{sec:monotonicity} we are going to prove monotonicity of Lagrangian links under more general hypotheses than those in \cite{CGHMSS22}: as anticipated above, this is a crucial new property needed for the good functioning of the proof. In Section \ref{sec:kunneth} we recall some fundamental information about the Künneth isomorphism in \cite{MT}. It is possible to use such isomorphism to prove that the Floer Homology we work with is non-zero, and we shall also use it in a simplified proof of a particular case of our Theorem \ref{thm:braidPersistenceGenus} (this may be found at the end of the paper and is independent of the rest).  After reviewing this construction, in Sections \ref{sec:defB} and \ref{sec:image} we give some necessary details about braid and Hamiltonian diffeomorphism groups, details which are used in the construction of the Hofer-Lipschitz functions and in the ``braid type'' function\begin{equation*}
    b: \Ham_{\underline{L}}(\Sigma_{g, p})\rightarrow \mathcal{B}_{k, g, p}
\end{equation*}
It is in Section \ref{sec:image} in particular that we give a characterisation of the image of $b$ using the description in \cite{bel04}.

After this necessary introduction, in Section \ref{sec:proofMainResult} we go on to construct the family of the Hofer Lipschitz homomorphisms $\mathfrak f_{(v_1, v_2)}$, and explain the optimisation process which yields the function $\mathfrak{f}$ from the main result. In Section \ref{sec:intersections} we also describe in detail the intersection phenomena taking place in the symmetric product. In Section \ref{sec:chenAdapt} we show how the proofs contained in \cite{chen23} also apply in our setting, and prove non-degeneracy of the norms on the braid groups of surfaces we defined above. Section \ref{sec:proof_Kunneth} closes the paper with a simplified proof of our result in which we only consider Hamiltonian diffeomorphisms with compact support in a fixed disc: Section \ref{sec:proof_Kunneth} will only rely on the ideas from \cite{M} and the Künneth isomorphism in \cite{MT}. 

\paragraph{Acknowledgements}
The first author would like to thank Vincent Humilière for his support and interest in the project, Pierre Godfard for office chats about symmetric products of Riemann surfaces, Paolo Bellingeri and Cheuk Yu Mak for their helpful indications. Fabio Gironella's patient Inkscape tutorial also saved him a lot of time.
The second author would also like to thank Vincent Humilière, Cheuk Yu Mak as well as his advisor Sobhan Seyfaddini for his support and helpful suggestions.
We thank the referee for their patient work and suggestions that brought to a substantial improvement of the paper, and especially for proposing an alternative, simpler proof for Lemma \ref{lem:locality}. Both authors are supported by the ERC Starting Grant number 851701.

\section{Quantitative Heegaard-Floer homology and a Künneth Formula}
\label{sec:prelim}

We recall the construction of Heegaard-Floer homology and link spectral invariants. After giving an overview of the theory and providing some of its properties, in Section \ref{sec:monotonicity} we carry out the proof of monotonicity of the class of Lagrangian links we are going to use, and in Section \ref{sec:kunneth} we sketch the proof of a Kunneth formula as contained in \cite{MT}. We mainly follow \cite[Section 6]{CGHMSS22}, although we are considering homology instead of cohomology, and our conventions differ slightly from theirs. Moreover, we weaken the necessary conditions for the definition of the homology in the case of surfaces with genus.

Let $\Sigma_g$ be a closed, symplectic surface, and $\underline{L}$ be a collection of $k+g$ circles satisfying the conditions detailed in Definition \ref{defn:pre-monLink} setting $p=0$. Let $\varphi\in \Ham(\Sigma_g)$ be a Hamiltonian diffeomorphism defined by a Hamiltonian $H$, and such that $\forall i, j=1,\dots, k+g$, $\varphi(L_i)\pitchfork L_j$. 

The symmetric group on  $k+g$ letters acts on $(\Sigma_{g})^{k+g}$ by permutation of the coordinates. We denote indifferently by $\Sym^{k+g}(\Sigma_g)$ or $X$ the quotient of the cartesian product by this action: this set may be given the structure of a symplectic manifold, as in \cite{Perutz}. We consider then the image of the Lagrangian torus $\prod_{i=1}^{k+g}L_i$ under the quotient map: since $\prod_{i=1}^{k+g}L_i$ does not intersect the diagonal (the subset of the Cartesian product where two or more coordinates coincide), the quotient map induces a diffeomorphism between $\prod_{i=1}^{k+g}L_i$ and its image, which we denote by $\Sym^{k+g}(\underline{L})$. Let also $\Delta$ be the image of the diagonal under the quotient map in $\Sym^{k+g}\Sigma_g$.  The push-forward of the product symplectic form on $\Sigma_g^{k+g}$, which we denote by $\omega_X$, is not smooth near $\Delta$: in \cite{Perutz} the author applies a convolution to smoothen it. The smoothened form coincides with $\omega_X$ outside an arbitrarily small neighbourhood of the diagonal, so that $\Sym^{k+g}(\underline L)$ turns out to be Lagrangian. We shall still denote the smoothened form by $\omega_X$: this abuse of notation will not affect any of the following.

In a similar way, to $\varphi$ we may associate a Hamiltonian diffeomorphism of $\Sym^{k+g}\Sigma_g$: it is going to be defined by approximating the function
\begin{equation*}
\Sym H_t: \Sym^{k+g}\Sigma_g\rightarrow \R,\,\,\,\Sym H_t([x_1, \dots, x_{k+g}]):=\sum_{i=1}^{k+g}H_t(x_i)
\end{equation*}
via another one which is a time-dependent constant near $\Delta$, and coincides with $H_t$ away from it. By an abuse of notation we denote by $\Sym^{k+g}(\varphi)$ the diffeomorphism generated by an approximation of the quotient Hamiltonian (none of the following constructions will depend on the actual approximation we choose). The Quantitative Heegaard-Floer Homology of the pair $(\underline{L}, H)$ is defined to be the traditional Lagrangian Floer Homology in $\Sym^{k+g}\Sigma_g$ of the pair $(\Sym^{k+g}(\underline{L}), \Sym^{k+g}(H))$.

We may define the Floer complex
\begin{equation*}
CF(\underline{L}, H)
\end{equation*}
To define the complex we consider a point $x\in \Sym^{k+g}(\underline L)$ as a constant path. Generators of the Floer complex are Hamiltonian chords from $\Sym^{k+g}(\underline L)$ to $\Sym^{k+g}(\underline L)$, with capping. We assume that such are homotopic to the reference path $x$, and a capping is the choice of a homotopy between the Hamiltonian chord and $x$. The differential of the complex is defined by counting appropriate pseudo-holomorphic strips in $X$ with boundary conditions on the Lagrangian tori $\Sym^{k+g}(\underline L)$ and $\Sym^{k+g}(\varphi)\Sym^{k+g}(\underline L)$. For details, we refer to \cite{CGHMSS22} (our conventions here are consistent with those of \cite{MT}).

The complex is filtered by the symplectic action: given a Hamiltonian chord $y$ with capping $\hat{y}$, its action is
\begin{equation}\label{eq:action}
\mathcal{A}^\eta_H(\hat{y}):=\int_0^1 \Sym H_t(y(t))\, dt - \int_{[0, 1]\times[0, 1]}\hat{y}^*\omega_X - \eta[\hat{y}]\cdot \Delta
\end{equation}
The constant $\eta$ appearing in the formula for the action parametrises a deformation one has to apply to the symplectic form on the symmetric product obtained in \cite{Perutz} in order to make the quotient Lagrangian monotone: for a detailed explanation see \cite[Remark 4.22]{CGHMSS22}. By \cite[Lemmata 3.3, 3.4]{M} (which still hold in the context of this paper) the unfiltered complex shall not depend on $\eta$, hence we suppress it from the notation. The action filtration as expected however does, and showing that its dependence on $\eta$ is connected to the braid type of a Hamiltonian diffeomorphism (if such braid type is defined) is in fact one key argument towards our proof. The actual value of $\eta$ is easily recovered from the area of the non-contractible component $B_{k+1}$:
\begin{equation*}
    \mathrm{Area}(B_{k+1})+2\eta(k+2g-1)=\lambda.
\end{equation*}
An explanation for this relation in the case of a surface without boundary (which is the case we are interested in) is the object of Section \ref{sec:monotonicity}.
The very last term is the intersection product between the capping and the diagonal: this is well defined because the capping sends the boundary of the square to the Lagrangians, which do not intersect the diagonal by assumption. The group $\pi_2(X)$ acts on the space of capped Hamiltonian chords, given by recapping: since the Lagrangian $\Sym^{k+g}(\underline L)$ is monotone, the symplectic action is shifted by multiples of $\lambda$ by recapping.

\begin{prp}
    The homology $HF(\underline L, H)$ is well defined.
\end{prp}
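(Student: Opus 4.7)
The plan is to reduce well-definedness of $HF(\underline L, H)$ to verifying the standard hypotheses of monotone Lagrangian Floer theory for the pair $\bigl(\Sym^{k+g}(\underline{L}),\Sym^{k+g}(\varphi)(\Sym^{k+g}(\underline{L}))\bigr)$ inside $(\Sym^{k+g}\Sigma_g,\omega_X)$. Transversality at intersection points follows from the transversality assumption $\varphi(L_i)\pitchfork L_j$ on the surface, Gromov compactness for holomorphic strips is standard once the almost complex structure is chosen tame with respect to $\omega_X$ and compatible with $\Delta$ in the sense of Perutz, and continuation maps handle invariance under the choice of $H$ and of auxiliary data. The only genuinely new content is monotonicity of the Lagrangian torus $\Sym^{k+g}(\underline{L})$ in the symmetric product under the weakened hypotheses of Definition \ref{defn:pre-monLink}.

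To establish monotonicity I would first describe the relative homology $H_2\bigl(\Sym^{k+g}\Sigma_g,\Sym^{k+g}(\underline{L})\bigr)$ via the tautological correspondence between disks in the symmetric product and chains in $\Sigma_g$ with boundary on $\bigcup_iL_i$. Any relative class is encoded, up to absolute cycles, by the integer multiplicities $(n_1,\dots,n_{k+1})$ with which it covers the connected components $B_1,\dots,B_{k+1}$ of $\Sigma_g\setminus\bigcup_iL_i$, so its $\omega_X$-area equals $\sum_{j=1}^{k}n_j\lambda+n_{k+1}(1-k\lambda)$ by condition $iii)$ of Definition \ref{defn:pre-monLink} and the normalisation $\int_{\Sigma_g}\omega=1$. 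The Maslov index of such a class is likewise a linear form in the $n_j$, with coefficients read off from the intersection number with the diagonal divisor $\Delta$; a careful count shows that only the contractible components $B_1,\dots,B_k$ contribute.

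Next I would compare the two linear forms and verify that their ratio takes a positive constant value on the lattice of relative classes of non-zero Maslov index, which is exactly monotonicity of $\Sym^{k+g}(\underline{L})$. The numerical constraint $\lambda\in(\tfrac{1}{k+1},\tfrac{1}{k})$ is precisely what guarantees both the positivity of this ratio and that all Maslov-2 classes have area bounded below by a common positive constant, thereby ruling out pathological disk bubbling. Once monotonicity is in place, the classical theory of monotone Lagrangian Floer homology yields $\partial^2=0$ and invariance, so that $HF(\underline L, H)$ is well defined.

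The main obstacle I anticipate is the relative homology and Maslov bookkeeping in the higher-genus regime: unlike in the genus-zero case of \cite{CGHMSS22}, the non-contractible components of $\underline L$ bound only chains (not discs) in $\Sigma_g$, which complicates both the enumeration of relative classes in $\Sym^{k+g}\Sigma_g$ and the verification that such classes do not contribute spurious non-monotone disks. Getting the sign conventions right in the Maslov computation, and in particular confirming that the non-contractible circles contribute trivially to Maslov while the pair of pants $B_{k+1}$ only affects area, is the technical step that unlocks monotonicity under the weaker pre-monotonicity assumption.
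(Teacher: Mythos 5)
Your reduction of the proposition to monotonicity plus the standard machinery of \cite{CGHMSS22} is exactly the paper's strategy (its proof is literally an invocation of Proposition \ref{prp:monotonicity} together with the arguments of \cite{CGHMSS22}), but your monotonicity argument has a genuine gap. The Lagrangian $\Sym^{k+g}(\underline L)$ is \emph{not} monotone in the classical sense you set out to verify: the basic class $[u_j]$ tautologically corresponding to a disc component $B_j$ ($j\le k$) has area $\lambda$ and Maslov index $2$, while the basic class $[u_{k+1}]$ corresponding to the pair of pants has area $A_{k+1}=1-k\lambda$ and Maslov index $2$ as well; since $\lambda\in\left(\frac{1}{k+1},\frac{1}{k}\right)$ forces $\lambda\neq 1-k\lambda$, the ratio of area to Maslov index is \emph{not} constant on the lattice of basic classes, and your plan to ``verify that their ratio takes a positive constant value'' fails on $[u_{k+1}]$. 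What actually holds, and what Proposition \ref{prp:monotonicity} proves, is the $\eta$-monotonicity identity $\omega_X([u])+\eta\,\Delta\cdot[u]=\frac{\lambda}{2}\mu([u])$ with $\eta=\frac{(k+1)\lambda-1}{2(k+2g-1)}>0$: the diagonal term is a genuine correction to the symplectic area (it is the same term that appears in the action functional (\ref{eq:action})), not merely a device for reading off the Maslov index.

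Relatedly, your claim that ``only the contractible components $B_1,\dots,B_k$ contribute'' to the Maslov index is incorrect, and in fact the entire difficulty of the higher-genus case sits in the opposite statement: one must show $\mu([u_{k+1}])=2$ and $\Delta\cdot[u_{k+1}]=2(k+2g-1)$, whereas the disc classes satisfy $\Delta\cdot[u_j]=0$. The paper obtains this by constructing $u_{k+1}$ tautologically from a simple branched cover $\widehat D\to D$ (this is where the hypothesis $k\ge 2$ enters), computing $\Delta\cdot[u_{k+1}]$ via Riemann--Hurwitz applied to a branched cover $\Sigma_g\to S^2$, and computing $\langle c_1(TX),[u]\rangle$ through the Abel--Jacobi map and the formula $c_1(T\Sym^d\Sigma_g)=(d-g+1)PD(X_q)-v^*\theta$, using that the Jacobian is aspherical. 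Without this computation you have neither the Maslov index of the pair-of-pants class nor the specific value of $\eta$ that makes the twisted monotonicity identity close up, so the compactness argument for the Floer differential does not go through as written.
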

\begin{proof}
    We apply Proposition \ref{prp:monotonicity} (proved in Section \ref{sec:monotonicity}) and the arguments contained in \cite[Section 6]{CGHMSS22}.
\end{proof}
\begin{prp}
    The homology of $CF(\underline L, H)$ is non-zero.
\end{prp}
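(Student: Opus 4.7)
The plan is to compute $HF(\underline L, H)$ with a convenient choice of Hamiltonian and identify the result with the singular homology of the Lagrangian torus, which is visibly non-zero.

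First I would appeal to the homotopy invariance of Floer homology under deformation of $H$, which is available from \cite{CGHMSS22} once the monotonicity result of Proposition \ref{prp:monotonicity} is in place: continuation maps induce isomorphisms between $HF(\underline L, H)$ and $HF(\underline L, H')$ for any two admissible Hamiltonians, so the problem reduces to a single convenient choice.

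Next I would take $H$ to be a $C^2$-small, time-independent Morse function on $X=\Sym^{k+g}\Sigma_g$ whose restriction to $\Sym^{k+g}(\underline L)\cong T^{k+g}$ is Morse (the symmetric-product Lagrangian is diffeomorphic to a torus because the circles in $\underline L$ are pairwise disjoint, so the quotient map is injective on $\prod_i L_i$). For monotone Lagrangians with minimal Maslov index at least $2$, the Floer--Oh argument identifies $HF(\underline L, H)$ with the Lagrangian quantum (pearl) homology of $\Sym^{k+g}(\underline L)$. This quantum deformation is encoded by the superpotential counting Maslov-$2$ holomorphic discs with boundary on the torus, and the pre-monotonicity condition $\lambda\in(1/(k+1),1/k)$ on the areas of the disc regions $B_j$ is precisely what guarantees that the superpotential admits a non-degenerate critical point. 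At such a critical point the quantum homology is isomorphic to $H_*(T^{k+g};\F)$, which is non-zero.

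The main obstacle will be the enumeration of Maslov-$2$ discs in $\Sym^{k+g}\Sigma_g$ bounded by $\Sym^{k+g}(\underline L)$ in the higher-genus regime. The $g$ non-contractible circles together with the pair of pants $B_{k+1}$ and its $p+k+2g-1$ legs could \emph{a priori} contribute extra discs (for instance ones that wind along the handles and would be counted by classes of the form $a_i$ or $b_i$) which are not present in the disc case of \cite{M} or the closed-sphere case of \cite{CGHMSS22}. One has to verify that these either do not exist for index/area reasons or cancel in pairs, so that only the $k$ ``standard'' discs coming from the regions $B_j$ survive. Once this classification is complete, the identification with $H_*(T^{k+g})$ and hence non-vanishing are formal. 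A conceptually cleaner alternative, foreshadowed by the reference to \cite{MT}, is to bypass the superpotential analysis entirely and apply the Künneth-type isomorphism of that paper, which is the route followed in the simplified proof at the end of the present article.
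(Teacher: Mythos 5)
Your primary route (reduce to a $C^2$-small Morse Hamiltonian, identify $HF$ with the pearl/quantum homology of the torus $\Sym^{k+g}(\underline L)$, and deduce non-vanishing from a non-degenerate critical point of the superpotential) is precisely the route the paper declines to take: Section \ref{sec:kunneth} mentions it only as something one ``could try''. The step you flag as ``the main obstacle'' --- classifying the Maslov-$2$ discs in $\Sym^{k+g}\Sigma_g$ with boundary on $\Sym^{k+g}(\underline L)$ --- is a genuine gap, not a routine verification. Proposition \ref{prp:monotonicity} already shows that the relevant relative homology classes are generated by $k+1$ basic classes $[u_1],\dots,[u_{k+1}]$, each of Maslov index $2$, where $[u_{k+1}]$ is built from a branched cover of the disc by the pair of pants $\overline B_{k+1}$ together with $g$ constant disc components on the $\alpha_j$. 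So your hope that ``only the $k$ standard discs coming from the regions $B_j$ survive'' cannot be right even at the level of classes (it already fails in the genus-$0$ case of \cite{CGHMSS22}, where the complement of the discs contributes a term to the potential); what is actually needed is the open Gromov--Witten count of holomorphic discs in each class, including $[u_{k+1}]$, and in the higher-genus regime that count is exactly the uncomputed hard part. Without it, the claim that pre-monotonicity ``is precisely what guarantees'' a non-degenerate critical point of the potential is unsupported.

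Your closing remark, however, names the paper's actual proof: apply the K\"unneth isomorphism of \cite{MT} $g$ times to the connected-sum decomposition $\Sigma_g\cong \sphere^2\# E_1\#\cdots\# E_g$, writing $\underline L=\underline L'\cup\alpha_1\cup\cdots\cup\alpha_g$ with $\underline L'$ a monotone link on the sphere. This reduces non-vanishing to $HF(\underline L',\underline L')\neq 0$ (known from \cite{CGHMSS22}) tensored with the Floer homology of a non-displaceable circle in a torus, and then continuation maps handle a general $H$. The point of that route is exactly that it sidesteps the disc enumeration you would otherwise have to perform. As written, your proposal should either carry out the disc count or promote the K\"unneth argument from an aside to the actual proof.
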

\begin{proof}
    The proof is a simple application of the Künneth formula together with continuation maps for the homology, see Section \ref{sec:kunneth} for the explanation of the argument.
\end{proof}
\begin{rem}
    We highlight that in the case of links with $g$ non-contractible components we may not use the tools from \cite{CGHMSS22} to deduce that the differential squares to 0, and that even in such a case the homology is non-zero. In fact, they make the stronger assumption on the Lagrangian link that the closures of the connected components $B_i$ be planar. If we have exactly $g$ non-contractible components however, this hypothesis is not satisfied, and it is not clear in principle whether the Floer homology in the setting we analyse in this paper is defined, or non-zero. In Sections \ref{sec:monotonicity} and \ref{sec:kunneth} both questions are addressed.
\end{rem}

The homology admits a product structure with unit $\mathbf{1}$. The product operation is, as per usual in Lagrangian Floer Theory, defined via counts of pseudo-holomorphic triangles: this yields a map
\begin{equation*}
    CF(\underline{L}, H)\otimes CF(\underline{L}, H)\rightarrow CF(\underline L, H\# H).
\end{equation*}One then post-composes the arrow above by a continuation map
\begin{equation*}
    CF(\underline L, H\# H)\rightarrow CF(\underline L, H)
\end{equation*}
and the composition descends to the homological level. The resulting operation does not moreover depends on the choices one makes to define the complexes and the continuation map.

We define the spectral invariant $c_{\underline L}(H)$ as:
\begin{equation*}
    c_{\underline L}(H):=\frac{1}{k+g}\inf\Set{\alpha\in \R \vert \mathbf{1} \text{ may be represented by a chain of action} <\alpha}
\end{equation*}
If the Hamiltonian $H$ is mean-normalised in fact $c_{\underline L}(H)$ is an invariant of the homotopy class of the Hamiltonian path $t\mapsto \varphi_t$ with fixed endpoints. If $g\geq 1$, the group of Hamiltonian diffeomorphisms is simply connected, so that $c_{\underline L}$ provides an invariant of the time 1-map. The resulting map\begin{equation*}
    c_{\underline L}: \Ham(\Sigma_g)\rightarrow \R
\end{equation*}
is moreover Hofer Lipschitz. The reason why $c_{\underline L}$ is defined on the whole of $\Ham(\Sigma_g)$ and not just on the non-degenerate diffeomorphisms is given by Hofer-density of the latter set, and by the Hofer Lipshitz property.

\subsection{Proof of monotonicity}\label{sec:monotonicity}
To show that the differential on the complex is well defined, we need to show that the zero-dimensional moduli spaces are compact. This is done in \cite{CGHMSS22} by proving a monotonicity property which gives a uniform bound to the energy of Maslov index 1 strips with given asymptotics. However, they only prove the property for links such that the closure of each connected component of the complement is planar. In this section, we prove the monotonicity property for a larger class of links, which in turn imply that the Floer complex is well defined.

For more generality, we assume that $\int_{\Sigma_g}\omega=A$ (the area $A$ is not necessarily 1). In the following, we split the circles in $\underline{L}$ in two classes: the contractible and the non-contractible ones. We assume that  $L_1,...,L_k$ ($k\geq 2$) bound disjoint discs $B_1,...,B_k$ of area $\lambda\in [\frac A {k+1}, \frac A k)$, and that $\alpha_1,...,\alpha_g$ ($\alpha_i:=L_{k+i}$) be meridians for each handle of $\Sigma_g$. This way, we have $\underline L=L_1\cup...\cup L_k\cup\alpha_1\cup...\cup\alpha_g$ and $X:= \Sym^{k+g}(\Sigma_g)$.
Let $B_{k+1}$ be the only connected component of $\Sigma_g\setminus\underline L$ that is not a disc, and let $A_{k+1}$ be its area. Let $\eta$ be the real number satisfying
\[A_{k+1}+2\eta(k+2g-1)=\lambda\]
Then $\eta$ can be recovered by the formula \begin{equation*}
   \eta=\frac {\lambda-A_{k+1}}{2(k+2g-1)}=\frac{(k+1)\lambda-A}{2(k+2g-1)} 
\end{equation*}and in particular is non-negative.

\begin{prp}
    \label{prp:monotonicity}
    For all $[u]$ in the image of $\pi_2(X,\Sym^{k+g}(\underline L))\to H_2(X,\Sym^{k+g}(\underline L))$ one has $$\omega_X([u])+\eta\Delta\cdot [u] = \frac \lambda 2 \mu([u])$$
    where $\mu$ is the Maslov class $H_2(X,\Sym^{k+g}(\underline L))\rightarrow\Z$.
\end{prp}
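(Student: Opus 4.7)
Since both sides of the identity $\omega_X([u])+\eta\Delta\cdot [u] = \frac{\lambda}{2}\mu([u])$ are $\mathbb{Z}$-linear in $[u]$, it suffices to verify the formula on a set of generators for the image of $\pi_2(X,\Sym^{k+g}(\underline L))\to H_2(X,\Sym^{k+g}(\underline L))$. I would build such a generating set by examining the lift of discs under the branched cover $\Sigma_g^{k+g}\to X$. Any disc $u:(D,\partial D)\to(X,\Sym^{k+g}(\underline L))$ admits, after a branched cover of the source, a lift $\tilde u=(u_1,\dots,u_{k+g})$ whose components $u_i$ have boundary on some $L_{\sigma(i)}$, with $\sigma$ a permutation of $\{L_1,\dots,L_k,\alpha_1,\dots,\alpha_g\}$. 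Relative to this lift the three invariants decompose as $\omega_X([u])=\frac{1}{\deg}\sum_i\int u_i^*\omega$, $\mu([u])=\frac{2}{\deg}\sum_i n_i$ with $n_i$ the winding of $\partial u_i$ around $L_{\sigma(i)}$, and $[u]\cdot\Delta$ equals the number of branch points of $\tilde u$. The area of each $u_i$ decomposes further as $\sum_{j=1}^{k+1} d_i^j\cdot\mathrm{Area}(B_j)=\lambda\sum_{j=1}^k d_i^j + A_{k+1}\, d_i^{k+1}$, where $d_i^j$ is the algebraic intersection of $u_i$ with a generic point of $B_j$.

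With this in hand, I would take as generators: (a) for each $j=1,\dots,k$, the standard disc class $e_j$ represented by $\{[x,P_2,\dots,P_{k+g}]:x\in\overline{B_j}\}$ for generic $P_i$ on the remaining Lagrangians; (b) a pair-of-pants class $e_{k+1}$ associated with $B_{k+1}$, obtained by lifting a disc whose first coordinate sweeps $B_{k+1}$ once while the others remain fixed; (c) for each handle $i=1,\dots,g$, a handle class $f_i$ coming from an arc through $B_{k+1}$ crossing $\alpha_i$ and producing a pair of coordinates that swap sides of $\alpha_i$; (d) a sphere class $[\iota(\Sigma_g)]\in\pi_2(X)$ from the embedding $\iota(x)=[x,P_2,\dots,P_{k+g}]$. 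For (a) one reads off $\omega_X(e_j)=\lambda$, $\Delta\cdot e_j=0$, $\mu(e_j)=2$, and the identity reduces to $\lambda=\lambda$. For (b) one gets $\omega_X(e_{k+1})=A_{k+1}$, $\mu(e_{k+1})=2$ and $\Delta\cdot e_{k+1}=2(k+2g-1)$, so the identity becomes exactly the defining equation $A_{k+1}+2\eta(k+2g-1)=\lambda$.

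The main obstacle is the verification for the new handle generators (c), which are the genuinely novel ingredient compared to the planar case of CGHMSS22. Since $\alpha_i$ does not bound in $\Sigma_g$, a disc in $X$ whose boundary winds around $\alpha_i$ cannot arise from a single component disc in $\Sigma_g$; instead the lift necessarily has two components that swap sides across $\alpha_i$, and this swap produces both a controlled number of branch points and a specific Maslov contribution. The technical point is to show that these counts combine, via the area relation $\int_{B_{k+1}}\omega=A_{k+1}$ together with the fact that both sides of $\alpha_i$ lie in $\partial B_{k+1}$, into a contribution compatible with the same $\eta=\frac{\lambda-A_{k+1}}{2(k+2g-1)}$. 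For the sphere class (d) the check reduces to the standard computation of $c_1(T\Sym^{k+g}\Sigma_g)\cdot[\iota(\Sigma_g)]$ and of $[\iota(\Sigma_g)]\cdot\Delta=k+g-1$, which together with the total area $A$ of $\Sigma_g$ yields an identity that holds thanks to the formula for $\eta$. Linearity then completes the proof.
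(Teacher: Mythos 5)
Your overall strategy (linearity plus verification on a generating set, with the non-planar component $B_{k+1}$ isolated as the new difficulty) matches the paper's, but the execution has genuine gaps. First, your generating set is not right: the image of $\pi_2(X,\Sym^{k+g}(\underline L))\to H_2(X,\Sym^{k+g}(\underline L))$ is freely generated by exactly $k+1$ classes, one for each connected component of $\Sigma_g\setminus\underline L$ (the paper checks that \cite[Lemma 4.10]{CGHMSS22} still applies here). There are no independent ``handle classes'' $f_i$: both sides of each meridian $\alpha_i$ lie in the single non-disc component $B_{k+1}$, so any class of the kind you describe has a domain that is an integer combination of the $B_j$'s and is therefore already a combination of your $e_j$'s; and the sphere class equals $\sum_{j=1}^{k+1}e_j$, so it is not an independent generator either. (Moreover $\iota(\Sigma_g)$ for your embedding $\iota$ is a genus-$g$ surface, not a sphere; the relevant sphere class is the one tautologically corresponding to a degree-$(k+g)$ simple branched cover $\Sigma_g\to S^2$, and it meets $\Delta$ in $2(k+2g-1)$ points by Riemann--Hurwitz, not $k+g-1$.) Since your items (c) and (d) are precisely where you locate ``the main obstacle'' and the verification there is explicitly deferred, the argument does not close.

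Second, even for $e_{k+1}$ neither the construction nor the two key numbers are supplied. A disc in $X$ cannot have ``first coordinate sweeping $B_{k+1}$ once while the others remain fixed'': $\overline B_{k+1}$ has genus $g$ and $k+2g$ boundary circles, so there is no degree-one map from a disc onto it, and a fixed tuple $(P_2,\dots,P_{k+g})$ cannot place one boundary point on every $\alpha_j$ while the whole boundary stays on $\Sym^{k+g}(\underline L)$. The paper instead builds $u_{k+1}$ tautologically from a $(k+g)$-fold simple branched cover $\widehat D=\overline B_{k+1}\sqcup\coprod_j D_j\to D$ whose $g$ extra disc components are sent to points of the $\alpha_j$. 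Granting the construction, the identities $\Delta\cdot[u_{k+1}]=2(k+2g-1)$ and $\mu([u_{k+1}])=2$ are exactly the two nontrivial computations of the proof: the first follows from Riemann--Hurwitz applied to the sphere class, and the second requires evaluating $c_1(T\Sym^{k+g}\Sigma_g)$ on that class via the Abel--Jacobi map, the formula $c_1=(d-g+1)PD(X_q)-v^*\theta$, and the asphericity of the Jacobian. You assert both values without justification, so the one genuinely new case of the proposition is left unproved.
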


\begin{proof}
    For $1\leq i \leq k+1$, let $\overline B_i$ be the closure of $B_i$ in $\Sigma_g$. Let $k_i$ be the number of boundary components of $\overline B_i$. For each $i$, fix a point $a_i$ in $B_i$, and let $X_{a_i}$ be the projection of $\Sigma_g^{k+g-1}\times \{a_i\}$ in $\Sym^{k+g}(\Sigma_g)$.
    In \cite[Section 4.5]{CGHMSS22}, it is explained how, when $\overline B_i$ is planar, one can construct a disc class $[u_i]$ in $H_2(X,\Sym\underline L)$, which satisfies $[u_i]\cdot X_{a_j}=\delta_{i,j}$, $[u_i]\cdot \Delta = k_i-1$, and $\mu([u_i])=2$.
    Here, the $\overline B_i$'s for $1\leq i \leq k$ are discs, therefore we can apply this construction to get $k$ classes satisfying $\omega_X([u_i])+\eta\Delta\cdot [u_i] = \lambda + 0 = \frac \lambda 2 \mu([u_i])$.

    However, $\overline B_{k+1}$ is not a planar domain (it is equal to the whole surface minus $k$ disjoint discs). But we can still apply a similar construction:

    Let $\widehat D:=\overline B_{k+1}\sqcup\coprod_{1\leq j\leq g}D_j$, where the $D_j$'s are copies of the closed unit disc D in $\C$.
    Let $\pi_{\widehat D}:\widehat D\to D$ be a $(k+g)$-fold simple branched covering, such that $\pi_{\widehat D}|_{D_j}$ is a biholomorphism for all $j$, and $\pi_{\widehat D_i}|_{\overline B_{k+1}}$ is a topological $k$-fold simple branched cover, such that $a_{k+1}$ is not a branched point. Since $k\geq 2$, such a branched cover always exists; one can construct it using techniques presented in John Etnyre's lecture notes \cite{Etnyre}.
    Let $v_{k+1}:\widehat D\to \Sigma_g$ be a map whose restriction to $\overline B_{k+1}$ is the identity, and which sends $D_j$ to a point in $\alpha_j$.
    Then, by tautological correspondence, we get a map $u_{k+1}:(D,\partial D)\to (X,\Sym\underline L)$, defined by $u_{k+1}(z)=v_{k+1}(\pi_{\widehat D}^{-1}(z))$. The class $[u_{k+1}]$ also satisfies $[u_{k+1}]\cdot X_{a_i}=v_{k+1}\cdot a_i=\delta_{i,k+1}$ for $1\leq i \leq k+1$.
    Therefore, the proof of \cite[Lemma 4.10]{CGHMSS22} goes through, and we get that the image of $\pi_2(X,\Sym\underline L)\to H_2(X,\Sym \underline L)$ is freely generated by $\{[u_i]\}_{i=1}^{k+1}$, and that the image of $\pi_2(X)\to H_2(X,\Sym \underline L)$ is freely generated by $\sum_{i=1}^{k+1}[u_i]$.

    Since the $[u_i]$ satisfy the equation $\omega_X([u_i])+\eta\Delta\cdot [u_i] = \frac \lambda 2 \mu([u_i])$ for $1\leq i \leq k$, it only remains to show it for $[u_{k+1}]$.

    We start by computing $[u_{k+1}]\cdot\Delta$ in a similar fashion as in \cite{CGHMSS22}. Let $\pi:\Sigma_g\to S^2$ be a topological $k+g$-fold simple branched covering of the sphere. Let $u:S^2\to X$ be the map tautologically corresponding to the pair $(\pi,id_{\Sigma_g})$. Then, since $[u]$ is in the image of $\pi_2(X)\to H_2(X,\Sym \underline L)$, there exists an integer $c$ such that $[u]=c\sum_{i=1}^{k+1}[u_i]$. Since $[u]\cdot X_{a_i} = 1$ for all $i$, we get that $c=1$. Therefore, $[u]\cdot \Delta = (\sum_{i=1}^{k+1}[u_i]) \cdot\Delta=[u_{k+1}]\cdot \Delta$.
    Since $id_{\Sigma_g}$ is injective, there is a one-to-one correspondence between branched point of $\pi$ and points of $S^2$ whose image by $u$ lies in $\Delta$. Moreover, since the branched points are simple, we have that $[u]\cdot \Delta$ is actually equal to the number of branched points of $\pi$.
    Therefore, the Riemann-Hurwitz formula implies that $2-2g=2(k+g)-[u]\cdot\Delta$, and we finally get $[u_{k+1}]\cdot\Delta = 2(k+2g-1)$.
    It only remains to compute the Maslov index of $[u_{k+1}]$:
    \[\mu([u_{k+1}])= \mu\left([u]-\sum\limits_{i=1}^{k}[u_i]\right)=2\langle c_1(TX),[u]\rangle-2k\]
    
    Let $v$ be the Abel-Jacobi map, from $\Sym^d(\Sigma_g)$ to its Jacobian variety
    $J$, which is isomorphic to the $2g$-dimensional torus $T^{2g}$.
    According to \cite[Chapter VII, Section 5]{Arbarello}, there is a class $\theta$ in $H^2(J)$ and a point $q$ in $\Sigma_g$ such that:
    \[c_1(T\Sym^d\Sigma_g)=(d-g+1)PD(X_q)-v^*\theta\]

    Now, in our case, $d=k+g$, and as above $[u]$ is a generator of the image of $\pi_2(X)\to H_2(X,\Sym \underline L)$. Since $J=T^{2g}$ is aspherical, $v_*[u]$ vanishes, and
    \[\mu([u_{k+1}])=2\langle c_1(TX),[u]\rangle -2k=2(k+1)[u]\cdot X_q-2k = 2\] where the last equality comes from the fact that $[u]\cdot X_q = id_{\Sigma_g}\cdot q=1$ when $q$ is not a branched point of $\pi$, which we can assume by perturbing $\pi$ if necessary.
    As claimed, we get
    \[\omega_X([u_{k+1}])+\eta\Delta\cdot [u_{k+1}] = A_{k+1}+2\eta(k+2g-1)=\lambda = \frac \lambda 2 \mu([u_i])\]    
\end{proof}

\begin{rem}
    One can use the results of this section to show the monotonicity property for a more general class of links. In fact, let $\underline L$ be a link on $\Sigma_g$, and $B_1,...,B_s$ be the connected components of $\Sigma_g\setminus\underline L$. Let $k_i$ be the number of boundary components of $\overline B_i$ (the closure of $B_i$ in $\Sigma_g$), $g_i$ be its genus, and $A_i$ be its area. Assume the following is satisfied:
    \begin{itemize}
        \item for all $i$, $\overline B_i$ contains exactly one non-contractible component of $\underline L$ around each of its handles (otherwise said, the non-contractible components of the link generate a $g$-dimensional subspace of the first homology of $\Sigma_g$);
        \item if $g_i>0$, $k_i\geq 2$;
        \item there exists constants $\lambda$ and $\eta$ such that for all $1\leq i \leq s$, $A_i+2\eta(k_i+2g_i-1)=\lambda$.
    \end{itemize}
    Then, $\underline L$ satisfies the monotonicity property: for all $[u]$ in the image of \begin{equation*}
        \pi_2(X,\Sym(\underline L))\to H_2(X,\Sym(\underline L))
    \end{equation*}one has $\omega_X([u])+\eta\Delta\cdot [u] = \frac \lambda 2 \mu([u])$.
    As in \cite{CGHMSS22}, it is enough to check the monotonicity property on the basic disc classes tautologically corresponding to the $\overline B_i$'s. When they are planar ($g_i=0$), it is shown in \cite{CGHMSS22} that they satisfy the monotonicity property by embedding $\overline B_i$ into the sphere. If $g_i\neq 0$, we can embed $\overline B_i$ into $\Sigma_{g_i}$ by gluing $k_i$ discs of area $\lambda$ on the boundary components. Then, we can apply the result of this section to get the monotonicity property for $\overline B_i$.
\end{rem}

\subsection{The K\"unneth formula}\label{sec:kunneth}
    In this section we present a result shown by Cheuk Yu Mak and the second author in \cite{MT}. We will use it in Section \ref{sec:proof_Kunneth} to give an easier proof of our main result in the (more restrictive) case of disc supported Hamiltonians. It was also used above to prove that the homology theory we consider is not 0. The setting is the following: let $\underline L$ and $\underline K$ be two transverse $\eta$-monotone Lagrangian links on a closed surface $(\Sigma,\omega)$. Let $(E,\omega_E)$ be a two-dimensional torus, $\alpha$ be a non-contractible circle on $E$, and $\alpha'$ be a small Hamiltonian deformation of $\alpha$, transverse to $\alpha$. Pick a point $\sigma_1\in\Sigma$ away from $\underline L$ and $\underline K$, and a point $\sigma_2\in E$ away from the support of the isotopy between $\alpha$ and $\alpha'$. Pick open discs $B(\sigma_1)$ and $B(\sigma_2)$ that contain $\sigma_1$ and $\sigma_2$ respectively, and that are also away from the Lagrangian links and the isotopy. Let $\Sigma':=\Sigma\#E$ be the connected sum of $\Sigma$ and $E$, obtained by gluing together $\Sigma\setminus \{\sigma_1\}$ and $E\setminus\{\sigma_2\}$ along the two cylinders $B(\sigma_1)\setminus\{\sigma_1\}$ and $B(\sigma_2)\setminus\{\sigma_2\}$. We equip $\Sigma'$ with an area form $\omega'$ which coincides with $\omega$ on $\Sigma\setminus B(\sigma_1)$, with $\omega_E$ on the support of the isotopy between $\alpha$ and $\alpha'$, and such that $\omega'(\Sigma')+4\eta=\omega(\Sigma)$. (For this last condition to be satisfied, we need to assume that the support of the isotopy between $\alpha$ and $\alpha'$ has small enough area.)
    The result is the following (\cite[Theorem 6]{MT}): for an appropriate choice of complex structure, there is a filtered chain complex isomorphism
    \[CF(\underline L,\underline K)\otimes CF(\alpha,\alpha')\xrightarrow{\sim}CF(\underline L\cup\alpha,\underline K\cup\alpha')\]
    In this K\"unneth formula for the connected sum, $\underline L$ and $\underline K$ are viewed as links on $\Sigma$, $\alpha$ and $\alpha'$ are Lagrangians on $E$, and $\underline L\cup\alpha$ and $\underline K\cup\alpha'$ are links in $\Sigma'$. The action for $CF(\alpha,\alpha')$ is also defined by the formula \ref{eq:action}, but in this case $\Delta=\emptyset$.
    The proof of this statement is inspired by the ``stabilisation invariance'' shown in \cite[Section 10]{OS}. The first step is to identify the generators of those vector spaces. Then to show that we get a chain isomorphism, one needs to identify the moduli spaces counting curves contributing to the differentials on both sides. This is done by a gluing argument and Gromov compactness. Finally, the part of the statement concerning the action filtration is a straightforward consequence of our choice of symplectic form $\omega'$.

    \begin{rem}Compared to \cite{MT} who showed the isomorphism in the case $\eta = 0$, we needed here to add the condition $\omega'(\Sigma')+4\eta=\omega(\Sigma)$ to make sure that the action filtration is preserved. This equality implies that $\underline L\cup\alpha$ and $\underline K\cup\alpha'$ are also $\eta$-monotone links on $(\Sigma',\omega')$, for the same $\eta$ as $\underline L$ and $\underline K$.
    \end{rem}

    We can use this K\"unneth isomorphism to show that the Heegaard Floer homology of a monotone link $\underline L$ (as in Definition \ref{defn:pre-monLink}) on a closed surface $\Sigma_g$ does not vanish. Indeed, one can write $\underline L = \underline L'\cup\alpha_1\cup...\alpha_g$ and decompose $\Sigma_g$ as a connected sum of a sphere containing $\underline L'$ and $g$ tori each containing a circle $\alpha_i$. $\underline L'$ is then a monotone link on the sphere, and by \cite{CGHMSS22} its homology $HF(\underline L',\underline L')$ does not vanish. Applying the K\"unneth isomorphism $g$ times, we get that $HF(\underline L,\underline L)$ does not vanish either. Finally, by invariance of the homology, it also does not vanish for any choice of Hamiltonian $H$.
    Alternatively, one could try to directly prove the non-vanishing of the homology as in \cite{CGHMSS22} by computing the potential of $\Sym(\underline L)$ and showing it admits a non-degenerate critical point. This potential is a map $W:H^1(\Sym(\underline L),\C^*)\to \C$ whose definition involves counting holomorphic discs of Maslov index 2 with boundary on $\Sym \underline L$ (see \cite{FOOO} for details about the theory of the potential).

\section{The braid type homomorphism}
\subsection{The definition}\label{sec:defB}
In the introduction it was claimed that $b$ was a well defined homomorphism from $\Ham_c(\Sigma_{g, p})$. To prove it, we need to remark that $b$ does not depend on the Hamiltonian isotopy one chooses. This is true if, for instance, $\pi_1(\Ham_c(\Sigma_{g, p}))=0$. In fact, since in such a case any two Hamiltonian isotopies are homotopic relative endpoints between isotopies, we may deform one Hamiltonian path into the other keeping strands from crossing. Such a deformation then provides a braid isotopy between the images of the two Hamiltonian isotopies, and $b$ is well defined. We then just have to show that $\pi_1(\Ham_c(\Sigma_{g, p}))=0$: this fact is standard, but as we could not find a reference, we decided to include the following proof.
\begin{lem}\label{lem:HamSimplConn}
    If $\Sigma_{g, p}\neq \sphere^2$, $\Ham_c(\Sigma_{g, p})$ is simply connected.
\end{lem}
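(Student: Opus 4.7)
The plan is to argue the stronger statement that $\Ham_c(\Sigma_{g,p})$ is in fact contractible, in three steps that reduce to classical results on the topology of diffeomorphism groups of surfaces.

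First I would recall that $\Diff_c^0(\Sigma_{g,p})$ is contractible: for the open disc this is Smale's theorem, and for every other compact surface with non-empty boundary or puncture set different from $\sphere^2$ and $\R P^2$ it is the content of the classical theorems of Earle-Eells and Gramain on the homotopy type of the identity component of the diffeomorphism group. The assumption $\Sigma_{g,p}\neq \sphere^2$ together with $p\geq 1$ places us in a case covered by those results. A parametric version of Moser's trick then upgrades this to the statement that the inclusion $\Symp_c^0(\Sigma_{g,p})\hookrightarrow \Diff_c^0(\Sigma_{g,p})$ is a weak homotopy equivalence: given a smooth $\sphere^k$-family of compactly supported symplectic diffeomorphisms together with a null-homotopy through smooth (not necessarily symplectic) isotopies, Moser produces a smooth $\sphere^k$-family of area-preserving isotopies realising the same null-homotopy. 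Hence $\Symp_c^0(\Sigma_{g,p})$ is contractible as well.

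I would then invoke the Flux homomorphism. A priori $\Flux$ is defined on the universal cover $\widetilde{\Symp_c^0}(\Sigma_{g,p})\to H^1_c(\Sigma_{g,p};\R)$; because the previous step yields $\pi_1(\Symp_c^0)=0$, the flux group is trivial and $\Flux$ descends to a group homomorphism $\Symp_c^0(\Sigma_{g,p})\to H^1_c(\Sigma_{g,p};\R)$, whose kernel is exactly $\Ham_c(\Sigma_{g,p})$. A smooth section is obtained by fixing compactly supported closed one-forms $\beta_1,\dots,\beta_n$ representing a basis of $H^1_c$ and sending $\sum_i c_i[\beta_i]$ to the time-one map of the symplectic dual of $\sum_i c_i\beta_i$. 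This trivialises $\Flux$ as a principal $\Ham_c$-bundle, so $\Symp_c^0\simeq \Ham_c\times H^1_c$ as topological spaces; since both factors on the right are contractible, so is $\Ham_c$, and in particular it is simply connected.

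The one point requiring genuine care is the identification $\Ham_c=\ker(\Flux)$ in the last step, which is the analogue here of the flux conjecture. However, once $\Symp_c^0$ is known to be simply connected this identification becomes immediate from the definition of a Hamiltonian isotopy as a symplectic isotopy with vanishing flux: any two paths from the identity to the same endpoint are homotopic in $\Symp_c^0$ and therefore have equal flux, so the vanishing of the descended flux at $\phi$ is equivalent to the existence of \emph{some} Hamiltonian path ending at $\phi$. The first two steps I would simply cite as black boxes from the literature on surface diffeomorphism groups.
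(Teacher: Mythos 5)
Your argument is correct and in fact proves the stronger statement that $\Ham_c(\Sigma_{g,p})$ is contractible; its skeleton --- pass from $\Diff$ to $\Symp$ by a parametric Moser argument, then from $\Symp$ to $\Ham$ by a flux argument --- is the same as the paper's, but the two implementations differ in instructive ways. Where you black-box the contractibility of $\Diff^0_c(\Sigma_{g,p})$, the paper spends most of its effort exactly there: the classical results of Smale, Earle--Eells and Gramain are stated for $\Diff_0(\Sigma_{g,p})$ or $\Diff_0(\Sigma_{g,p},\partial\Sigma_{g,p})$, not for the compactly supported group, and the paper bridges this gap with the boundary-restriction fibration $\Diff_0(\Sigma_{g,p},B)\to\Diff_0(\Sigma_{g,p})\to\Diff_0(B)$ (treating the disc and the cylinder separately, since there $\Diff_0(\Sigma_{g,p})$ is only homotopy equivalent to $SO(2)$ rather than contractible) followed by a direct-limit argument over collar neighbourhoods to descend to compact supports. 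If you keep the first step as a citation you must make sure the reference genuinely covers $\Diff_c$ of the open surface, or else reproduce such a limit argument. For the final step the paper simply quotes the injectivity of $\pi_1(\Ham_c)\to\pi_1(\Symp_c)$ from McDuff--Salamon, which is the same flux fact you use; your explicit section of $\Flux$ buys more, namely the homeomorphism $\Symp^0_c\cong\Ham_c\times H^1_c$ and hence contractibility of $\Ham_c$, a strengthening the paper only conjectures in a remark. Two small points to tighten: the identification $\Ham_c=\ker(\Flux)$ is not ``by definition'' but rests on the theorem that a compactly supported symplectic isotopy of zero flux can be deformed rel endpoints to a Hamiltonian one (McDuff--Salamon, Theorem 10.2.5), and your section $v\mapsto\phi^1_{X_v}$ is a continuous section but not a group homomorphism --- which is enough, since the trivialisation $(\psi,v)\mapsto\psi\cdot s(v)$ only needs continuity of $s$ and of the descended flux.
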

\begin{proof}
    Let $B=\partial \Sigma_{g, p}$: it is a disjoint union of $p$ circles. The embedding $B\hookrightarrow \Sigma_{g, p}$ provides a fibration
    \begin{equation}\label{eq:fibrDiff}
        \mathrm{Diff}_0(\Sigma_{g, p}, B)\hookrightarrow \mathrm{Diff}_0(\Sigma_{g, p})\rightarrow \mathrm{Diff}_0(B)
    \end{equation}
where the fibre is the group of diffeomorphisms of $\Sigma_{g, p}$ inducing the identity on the boundary and isotopic to the identity, the total space is the connected component of the identity of the diffeomorphisms of $\Sigma_{g, p}$, and the base is the connected component of the identity in the group of diffeomorphisms of a disjoint union of circles.

Let us consider the long exact sequence of the fibration in (\ref{eq:fibrDiff}):
\begin{equation}\label{eq:LESfibr}
    \pi_2(\mathrm{Diff}_0(B))\rightarrow\pi_1(\mathrm{Diff}_0(\Sigma_{g, p}, B))\rightarrow \pi_1(\mathrm{Diff}_0(\Sigma_{g, p}))\rightarrow \pi_1(\mathrm{Diff}_0(B))
\end{equation}
The term $\pi_2(\mathrm{Diff}_0(B))$ is always 0, since it is a power of $\pi_2(\mathrm{Diff}_0(\sphere^1))$ and this is trivial ($\mathrm{Diff}_0(\sphere^1)$ has the homotopy type of the circle itself). This proves that the second arrow in the exact sequence (\ref{eq:LESfibr}) is always an injection. Now, if $g>1$, $(g=1,p\geq 1)$ or $(g=0, p\geq 3)$, \cite[Theorem 1]{Gramain} shows that $\mathrm{Diff}_0(\Sigma_{g, p})$ is contractible: in such a case we deduce that $\pi_1(\mathrm{Diff}_0(\Sigma_{g, p}, B))=0$. We want to prove that in this case $\mathrm{Diff}_0(\Sigma_{g, p}, B)$ and $\Ham_c(\Sigma_{g, p})$ have isomorphic fundamental groups.

Let $\Symp_0(\Sigma_{g, p}, B)$ be the group of symplectic diffeomorphisms of $\Sigma_{g, p}$ inducing the identity on the boundary and isotopic to the identity through symplectic diffeomorphisms with the same property. By \cite{ban74} (generalising a result of Moser \cite{mos65}) $\Symp_0(\Sigma_{g, p}, B)$ is a deformation retract of $\mathrm{Diff}_0(\Sigma_{g, p}, B)$, and in particular\begin{equation*}
    \pi_1(\Symp_0(\Sigma_{g, p}, B))\cong\pi_1(\mathrm{Diff}_0(\Sigma_{g, p}, B))
\end{equation*}
Fix a decreasing sequence of open collar neighbourhoods of the boundary, call them $(U_n)_{n\geq 1}$: they satisfy the property
\begin{equation}\label{eq:intCollNeigh}
    \bigcap_{n\geq 1}U_n=B
\end{equation}
Define by $\Symp_{U_n}(\Sigma_{g, p})$ the group of symplectic diffeomorphisms of $\Sigma_{g, p}$ which are supported in $\Sigma_{g, p}\setminus U_n$ and are isotopic to the identity via such diffeomorphisms. 

Clearly, 
\begin{equation}
   \Symp_{U_n}(\Sigma_{g, p})\subset \Symp_{U_{n+1}}(\Sigma_{g, p}),\,\,\, \bigcup_{n\geq 1}\Symp_{U_n}(\Sigma_{g, p})=\Symp_c(\Sigma_{g, p})
\end{equation}
and both these conditions together imply that
\begin{equation}\label{eq:limFundGroupsSymp}
    \pi_1(\Symp_c(\Sigma_{g, p}))=\lim_n\pi_1(\Symp_{U_n}(\Sigma_{g, p}))
\end{equation}
We shall now prove that $\pi_1(\Symp_c(\Sigma_{g, p}))$ injects into $\pi_1(\Symp_0(\Sigma_{g, p}, B))=0$ via the map induced by the inclusion. From this it is possible to deduce that $\pi_1(\Ham_c(\Sigma_{g, p}))=0$, since the map:
\begin{equation*}
   \pi_1(\Ham_c(\Sigma_{g, p}))\rightarrow  \pi_1(\Symp_c(\Sigma_{g, p}))
\end{equation*}
defined by the inclusion is in fact an injection (see \cite[Proposition 10.2.13]{mcDSal16}).

Let therefore $t\mapsto \varphi_t\in \Symp_c(\Sigma_{g, p})$ be a loop based at the identity which becomes contractible once seen as a representative in $\pi_1(\Symp_0(\Sigma_{g, p}, B))$. There is therefore a homotopy of symplectic isotopies fixing the boundary pointwise shrinking $t\mapsto \varphi_t$ to the constant path at the identity:
\begin{align*}
    \psi: [0, 1]_s\times [0, 1]_t\rightarrow\Symp_0(\Sigma_{g, p}, B),\\\,\,\, \psi(0, t)=\varphi_t, \psi(1, t)=\mathrm{Id}, \psi(s, 0)=\psi(s, 1)=\mathrm{Id}
\end{align*}

The goal now is to modify this isotopy to a new one, in area-preserving diffeomorphisms which fix a small enough collar neighbourhood of the boundary. Since the support of $\varphi_t$ is compact in $\Sigma_{g, p}\setminus B$ for all $t$, there exists an $n$ big enough such that the support of $\varphi_t$ is contained in $\Sigma_{g, p}\setminus U_n$ for all $t$.

Since we know that $\Symp_0(\Sigma_{g, p}, B)$ is a deformation retract of $\Diff_0(\Sigma_{g, p}, B)$, we may homotope $\psi$ to another homotopy $\psi'$, this time via diffeomorphisms fixing $U_n$ pointwise (but not necessarily area-preserving), from the identity to itself. We now consider $\psi'$ as a homotopy in diffeomorphisms of $\overline{\Sigma_{g, p}\setminus U_n}$: applying Moser's result again we find a homotopy in $\Symp_{U_n}(\Sigma_{g, p})$, call it $\psi''$, between the symplectic loop $t\mapsto \varphi_t$ and the constant loop. Since $\Symp_{U_n}(\Sigma_{g, p})\subset \Symp_c(\Sigma_{g, p})$, the existence of $\psi''$ proves that we have a sequence of group morphisms
\begin{equation*}
     \pi_1(\Ham_c(\Sigma_{g, p}))\hookrightarrow  \pi_1(\Symp_c(\Sigma_{g, p}))\hookrightarrow\pi_1(\Symp_0(\Sigma_{g, p}, B))\cong\pi_1(\mathrm{Diff}_0(\Sigma_{g, p}, B))
\end{equation*}
and the rightmost group is 0 whenever $g>1$, $(g=1, p\geq 1)$ or $(g=0, p\geq 3)$ by Gramain's result. We have thus proved that $\Ham_c(\Sigma_{g, p})$ is simply connected under the above topological assumptions.

If $g=1, p=0$, $\Sigma_{g, p}$ is a torus, for which it is well known that $\Ham_c(\Sigma_{1, 0})$ is simply connected (see for instance \cite[Section 7.2]{pol01}).

We are now left with the cases of the disc $\D$ and cylinder $Z$ to consider. In these two cases \cite{Gramain} shows that\begin{equation*}
    \Diff_0(\D, \partial \D)\sim \Diff_0(Z, \partial Z)\sim SO(2, \R)
\end{equation*}
where $\sim$ denotes homotopy equivalence, the diffeomorphisms induce the identity on the boundary, and $SO(2, \R)$ is the real special orthogonal group of rank 2. Let us again adopt the notation from above: $\Sigma_{g, p}$ is our surface (disc or cylinder), and $B$ its boundary. We have again an exact sequence \begin{equation*}
    0\rightarrow\pi_1(\mathrm{Diff}_0(\Sigma_{g, p}, B))\rightarrow \pi_1(\mathrm{Diff}_0(\Sigma_{g, p}))\rightarrow \pi_1(\mathrm{Diff}_0(B))
\end{equation*}
but here the third group is not trivial. We need thus to show that the image of the second arrow is 0.

If $\Sigma_{g, p}=\D$, then $SO_2(\R)$ is a subgroup of $\mathrm{Diff}_0(\Sigma_{g, p})$, and is in fact its deformation retract. A generator of $\pi_1(SO_2(\R))$ is given by a full rotation, which is mapped to the generator of $\pi_1(\Diff_0(\partial \D))$: the third arrow is an injection in the case of the disc, so that the map
\begin{equation*}
    \pi_1(\mathrm{Diff}_0(\Sigma_{g, p}, B))\rightarrow \pi_1(\mathrm{Diff}_0(\Sigma_{g, p}))
\end{equation*}
from above is 0 as claimed.

If we examine $\Sigma_{0, 2}=Z$ instead, the argument is similar: the generator of $\pi_1(\Diff_0(Z))$ is the full rotation, which is mapped to a non-zero element in $\pi_1(\Diff_0(\partial Z))$ (not a generator in this case, of course). The third arrow is an injection in this case as well.

Summing up, in the cases $g=0, p\in \{1, 2\}$ we still have $\pi_1(\Diff_0(\Sigma_{g, p}, B))=0$. The rest of the arguments above did not depend on the actual surface we worked on, and they carry over to this context: we infer that\begin{equation*}
  \pi_1(\Ham_c(\D))\cong\pi_1(\Ham_c(Z))=0  
\end{equation*}
\end{proof}
\begin{rem}
    In fact, one should be able to deduce from the proof that $\Ham_c(\Sigma_{g, p})$ is contractible whenever $\Diff_0(\Sigma_{g, p}, B)$ is. One may apply \cite[Theorem 15]{pal66} and use it in combination with the fact that the homotopy groups of an increasing union of topological spaces is the limit of homotopy groups of said subspaces (easy consequence of the compactness of spheres).
\end{rem}

\subsection{The image}
\label{sec:image}
Let $\Sigma_{g, p}$ be a Riemann surface with genus $g$ and $p$ punctures. Bellingeri in \cite{bel04} describes via generators and relations the braid group with $k$ strands on $\Sigma_g$, for which we write $\mathcal{B}_{k, g, p}$.

Such group has $k+2g+p-2$ generators, denoted by 
\begin{equation}
    \sigma_1,\dots, \sigma_{k-1}, a_1,\dots, a_g,b_1,\dots, b_g, z_1,\dots, z_{p-1}
\end{equation}
The $\sigma_i$ here represent a clockwise half-twist between $P_i$ and $P_{i+1}$ (while the other strands are fixed), the $a_i$ move $P_1$ along a non-contractible loop on a generator of the homology parallel to $\beta_i$, the $b_i$ move $P_1$ along a loop parallel to $\alpha_i$, and $z_i$ move $P_1$ along a non-contractible loop going around the $i-th$ puncture. The $a_i$, $b_i$ and $z_i$ all keep the points $P_2,\ldots,P_k$ fixed. These generators are depicted in Figure \ref{fig:generators}.

\begin{figure}
\centering
\def\svgwidth{\linewidth}
     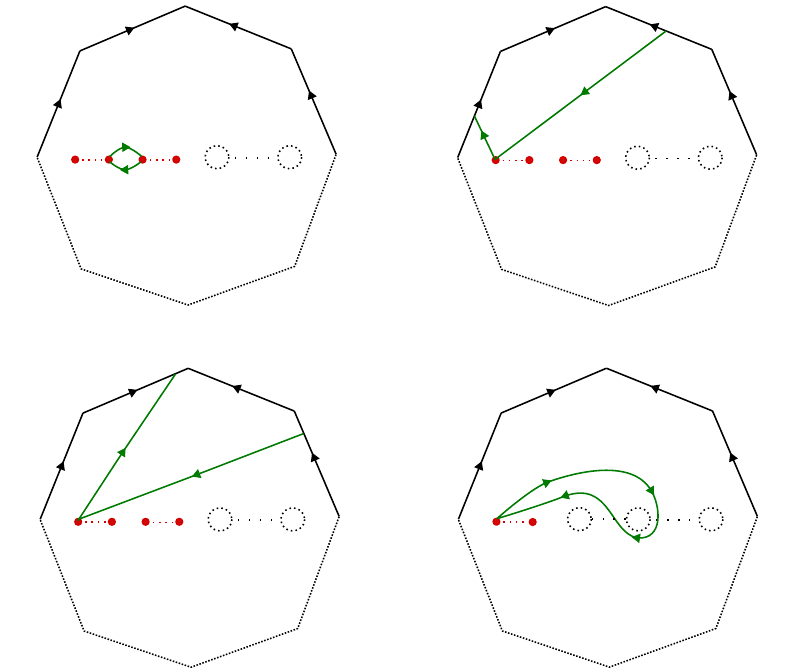
     \caption{The generators of the braid group $\mathcal{B}_{k, g, p}$. In clockwise order, starting from top-left: $\sigma_j$, $a_i$, $z_l$ and $b_i$ . We draw a fundamental domain of the surface, and all base points which are not endpoints of the green paths are to be thought of as constant paths.}
        \label{fig:generators}
\end{figure}

There are then 8 groups of relations, we refer to \cite[Theorem 1.1]{bel04} for the precise definitions.

We aim to describe the braid type of a Hamiltonian diffeomorphism preserving a given premonotone Lagrangian configuration $\underline L$. We assume hereafter that the $P_i$'s lie in $L_i$ and that the $g$ non contractible circles in $\underline L$ coincide with small disjoint deformations of the $\beta_i$ (see for instance $L_{k+i}$ in Figure \ref{fig:b-1ab_generator}). Consider the subgroup of $\mathcal{B}_{k, g, p}$ generated by ${(\sigma_i)}_{i=1,\dots, k-1}$, ${(a_j)}_{j=1,\dots, g}$,  ${(b_j^{-1}a_jb_j)}_{j=1,\dots, g}$ and ${(z_l)}_{l=1,\dots, p-1}$ only (no $b_j$ alone is present), with the restrictions of Bellingeri's relations. For clarity, the $\sigma_i$ we consider are those describing exchanges between contractible components of the link only. Denote this subgroup by $\mathcal{B}_{\underline{L}}$: in Lemma \ref{lem:imb} we prove that, consistently with the notation given in the introduction, $\mathcal{B}_{\underline{L}}$ is indeed the image of the braid type homomorphism.

\begin{rem}
\label{rem:relation}
    Even though we have $p$ punctures, the associated generators are only $p-1$. As remarked in \cite{bel04}, one may express a single loop $z_p$ around the last puncture using the relation
    \[[a_1,b_1^{-1}]\cdots [a_g,b_g^{-1}]=\sigma_1\sigma_2\cdots\sigma_{k-1}^2\cdots \sigma_2\sigma_1z_1^{-1}\cdots z_p^{-1}\]
\end{rem}
\begin{lem}\label{lem:imb}
    Let us consider the group homomorphism
    \begin{equation}
        b:\Ham_{\underline{L}, c}(\Sigma_{g,p})\rightarrow\mathcal{B}_{k, g, p}
    \end{equation}
    Its image is precisely $\mathcal{B}_{\underline{L}}$, and it is isomorphic to $\mathcal B_{k,0,p+2g}$.
\end{lem}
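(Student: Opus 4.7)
My strategy is to identify both the image of $b$ and the subgroup $\mathcal{B}_{\underline{L}}$ with the braid group $\mathcal{B}_{k, 0, p+2g}$ of the open subsurface $\Sigma' := \Sigma_{g, p} \setminus \bigcup_{i=1}^{g} L_{k+i}$. Since the circles $L_{k+i}$ are pairwise non-isotopic and non-separating (each is parallel to a distinct meridian $\beta_i$), cutting along them yields a planar surface homotopy equivalent to $\Sigma_{0, p+2g}$, with two new boundary components replacing each $L_{k+i}$. The inclusion $\iota: \Sigma' \hookrightarrow \Sigma_{g, p}$ induces a homomorphism $\iota_*: \mathcal{B}_{k, 0, p+2g} \to \mathcal{B}_{k, g, p}$, and the plan is to establish $\Im(b) = \Im(\iota_*) = \mathcal{B}_{\underline{L}}$.

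First, every $\varphi \in \Ham_{\underline{L}}(\Sigma_{g, p})$ preserves each $L_{k+i}$ setwise: being Hamiltonian it is isotopic to the identity, hence trivial on $H_1(\Sigma_{g, p}; \Z)$, while the classes $[L_{k+i}]$ are linearly independent in homology. To prove $\Im(b) \subseteq \Im(\iota_*)$, I would then show that every $\varphi \in \Ham_{\underline{L}}$ admits a Hamiltonian isotopy to the identity along which each $L_{k+i}$ is preserved setwise throughout. Starting from any isotopy $\varphi_t$, this is obtained by a Moser-type correction: one modifies $\varphi_t$ by a Hamiltonian $\psi_t$ so that $\psi_t \circ \varphi_t$ fixes each $L_{k+i}$ setwise, using that the loop $t \mapsto \varphi_t(L_{k+i})$ in the space of embedded circles is null-homotopic thanks to the simple connectedness of $\Ham_c$ established in the preceding lemma. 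The strands $\varphi_t(P_i)$ then stay inside $\Sigma'$ for all $t$, so $b(\varphi)$ lies in $\Im(\iota_*)$.

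For the reverse inclusion $\mathcal{B}_{\underline{L}} \subseteq \Im(b)$, I would construct an explicit element of $\Ham_{\underline{L}}$ realising each listed generator: half-twists supported in small discs give the $\sigma_i$; rotations localised near each original puncture give the $z_l$; a flow transporting $P_1$ along a loop parallel to $L_{k+j}$ on one side yields $a_j$; and the same construction on the other side of $L_{k+j}$ yields $b_j^{-1} a_j b_j$. The conjugated form reflects that, after cutting, the two sides of $L_{k+j}$ correspond to two distinct new boundary components of $\Sigma'$, whose loops in $\mathcal{B}_{k, g, p}$ differ by conjugation with the $\alpha_j$-parallel generator $b_j$.

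The main obstacle is to verify that $\iota_*$ is an isomorphism onto $\mathcal{B}_{\underline{L}}$. Both groups admit Bellingeri presentations with $k + p + 2g - 2$ generators, and under $\iota_*$ the half-twists of $\mathcal{B}_{k, 0, p+2g}$ map to the $\sigma_i$, the loops around the $p$ original punctures map to the $z_l$, and the loops around the two new punctures created by each cut map to $a_j$ and $b_j^{-1} a_j b_j$. I would close the proof by checking that the defining relations of $\mathcal{B}_{k, 0, p+2g}$ are respected in $\mathcal{B}_{k, g, p}$ by these elements, and that no further relation is imposed; this second part is the delicate one and is best handled by exhibiting a splitting (e.g., by iteratively cutting along one $L_{k+j}$ at a time and using the resulting short exact sequence of configuration space fibrations), which simultaneously rules out any relation involving $b_j$ alone from appearing in $\mathcal{B}_{\underline{L}}$ and establishes the desired isomorphism.
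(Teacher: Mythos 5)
Your proposal follows essentially the same route as the paper: both identify $\mathcal{B}_{\underline L}$ and $\Im(b)$ with the braid group $\mathcal B_{k,0,p+2g}$ of the planar subsurface obtained by deleting (tubular neighbourhoods of) the non-contractible components, argue that each $L_{k+i}$ is individually preserved (you via triviality of $\varphi_*$ on $H_1$, the paper via non-displaceability — both work), correct the Hamiltonian isotopy so that the strands stay in this subsurface, realise the generators of $\mathcal B_{k,0,p+2g}$ by explicit Hamiltonian diffeomorphisms, and match them with $\sigma_i$, $z_l$, $a_j$, $b_j^{-1}a_jb_j$. The one place you go beyond the paper is in flagging the injectivity of the inclusion-induced map $\mathcal B_{k,0,p+2g}\to\mathcal B_{k,g,p}$ as the delicate point (the paper takes this for granted); that concern is legitimate, but it is more cleanly settled by the Paris--Rolfsen theorem on geometric subgroups of surface braid groups (no complementary component of the subsurface is a disc) than by the cutting/fibration splitting you sketch, since cutting the surface does not produce a Fadell--Neuwirth-type fibration.
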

\begin{proof}
We are going to describe the group $\mathcal{B}_{\underline{L}}$ as the fundamental group of the $k$-th configuration space of the surface $\tilde{\Sigma}_{g, p}$, which we obtain by removing tubular neighbourhoods of the $g$ non contractible circles $(L_{j})_{i=k+1, \dots, k+g}$ from $\Sigma_{g, p}$. Now, $\tilde{\Sigma}_{g, p}$ is a punctured sphere, with $2g+p$ punctures. Its braid group $\mathcal B_{k,0,p+2g}$ is described in \cite[Theorem 2.1]{beFu04}: it has the $k-1$ generators corresponding to moves which take place in a disc on the surface, and $p+2g-1$ corresponding to non-contractible loops around punctures.

Let us show that we may identify the image of $b$ with this fundamental group. Let $\varphi\in\Ham_{\underline{L}, c}(\Sigma_{g, p})$, $(\varphi_t)_{t\in[0,1]}$ any Hamiltonian isotopy between the identity and $\varphi$, and let $L_j$ be any non contractible component of $\underline L$. In Lemma \ref{lem:loopHamDif}we are going to show that there is a loop in $\Ham_c(\Sigma_{g, p})$ based at the identity, $t\mapsto \psi_t$, such that for all $t$ and for any non contractible component $L_j$ one has
\begin{equation*}
    \psi_t\varphi_t(L_j)=L_j
\end{equation*}
Let us admit this result for the moment, and assume therefore that the isotopy $\varphi_t$ fixes each non-contractible $L_j$ for all times $t\in [0,1 ]$.

Because the circles cannot intersect during this isotopy, and since $b$ does not depend on the particular isotopy one chooses without loss of generality (up to shrinking the tubular neighbourhoods we take off to produce $\tilde{\Sigma}_{g, p}$) the braid $b(\varphi)$ is represented by a braid entirely contained in $\tilde{\Sigma}_{g, p}$. All of this proves that the image of $b$ is contained in $\mathcal B_{k,0,p+2g}$, which is what we wanted. To prove equality, we just need to be able to represent all generators of $\mathcal{B}_{k,0,p+2g}$ as images of $b$. This is readily done: given any $\sigma_i$, we may consider a disc containing $L_1$ and $L_i$ and apply a half rotation. In the case of any $z_i$ (loop around a puncture) we may use an annulus with centre at the puncture, and a full rotation on it.

We may now end the proof showing that the generators of $\mathcal{B}_{\underline L}$, ${(\sigma_i)}_{i=1,\dots, k-1}$, ${(a_j)}_{j=1,\dots, g}$,  ${(b_j^{-1}a_jb_j)}_{j=1,\dots, g}$ and ${(z_l)}_{l=1,\dots, p-1}$ (which are elements of $\mathcal{B}_{k, g, p}$) are in fact the images of the generators of $\pi_1(\mathrm{Conf}^k(\tilde\Sigma_{g, p}))$ under the inclusion morphism. In fact, it is easy to see that the $\sigma_i$'s in $\mathcal{B}_{k, g, p}$ correspond to the half-twists in $\pi_1(\mathrm{Conf}^k(\tilde\Sigma_{g, p}))$, and that the turns around the punctures in $\tilde\Sigma_{g, p}$ are sent by the inclusion to turns around the punctures in $\Sigma_{g,p}$ (the $z_j$'s) and turns around the handles (the $a_i$'s and $b_i^{-1}a_ib_i$'s) (see Figure \ref{fig:b-1ab_generator} for a picture of $b_i^{-1}a_ib_i$).

    \begin{figure}
	\centering
	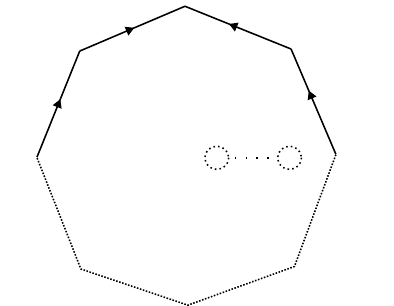
	\caption{The generator $b_i^{-1}a_ib_i$}
	\label{fig:b-1ab_generator}
    \end{figure}

\end{proof}
\begin{lem}\label{lem:loopHamDif}
Let $t\mapsto \varphi_t$ be an isotopy in $\Ham_c(\Sigma_{g, p})$ such that, for any $L_j$ non-contractible circle in a premonotone configuration $\underline L$ we have $\varphi_1(L_j)=L_j$. Then there is a loop in $\Ham_c(\Sigma_{g, p})$ based at the identity, $t\mapsto \psi_t$, such that for all $t$ and for all non contractible components $L_j$ one has
\begin{equation*}
    \psi_t\varphi_t(L_j)=L_j
\end{equation*}
\end{lem}
\begin{proof}
    Since $\varphi_1$ fixes all the non-contractible components $L_j$, we can find a Hamiltonian isotopy $(R_t)$, supported in tubular neighbourhoods of those components, fixing the $L_j$ for all $t$ and moreover such that $R_1\varphi^{-1}$ fixes pointwise small tubular neighbourhoods $U_j$ of the circles $L_j$. We remark that $R_1\varphi^{-1}$ defines a symplectic diffeomorphism $\mathfrak{R}$ of $\Sigma_{g,p}\setminus (\bigcup L_j)$ which is compactly supported in its interior. Remark that the union we are considering is only of the non-contractible $L_j$, and that we are not subtracting the contractible components of the Lagrangian link to $\Sigma_{g, p}$.
    
    We may choose $R$ so that $\mathfrak{R}$ is Hamiltonian. At first, we show that $\mathfrak{R}$ can be made isotopic via compactly-supported symplectomorphisms to the identity, and then we prove that we may in fact suppose it to be Hamiltonian. We use a slight variation of an argument from \cite{kha11}.
    \vspace{0.2 cm}

\textbf{Step 1}: $R$ may be chosen so that $\mathfrak{R}$ is symplectically isotopic to the identity.\vspace{0.2 cm}

\noindent
By the description given in \cite{farMar11} of $\pi_0(\Diff_0(\Sigma_{g,p}\setminus (\bigcup L_j)))$ and a Moser argument, the Symplectic Mapping Class Group of $\Sigma_{g,p}\setminus (\bigcup L_j)$ is generated by Dehn twists supported near the ends, since $\Sigma_{g,p}\setminus (\bigcup L_j)$ is a genus-0 surface. The symplectic diffeomorphism $\mathfrak{R}$ is isotopic to a non-trivial composition of Dehn twists (i.e. it is not isotopic to  the identity through compactly supported symplectomorphisms) if and only if there exist fixed points  of $R_1\varphi^{-1}$ in $U_j$ whose trajectories, along a Hamiltonian isotopy between $\mathrm{id}$ and $R_1\varphi^{-1}$ in $\Ham_c(\Sigma_{g, p})$, are not contractible. Remark that this condition does not depend on the choice of Hamiltonian isotopy by Lemma \ref{lem:HamSimplConn}. The homotopy type (more specifically, the number of rotations around a handle of the surface) of these trajectories determines the powers of the Dehn twists appearing in the isotopy class of $\mathfrak{R}$. Moreover, the homotopy type of such fixed points being constant on the whole of $U_j$ implies that the Dehn twists around the two ends corresponding to $L_j$ appear with opposite exponents. Consider a Hamiltonian $A$ whose time-1 map fixes pointwise the tubular neighbourhoods $U_j$, whose Hamiltonian vector field generates a flow that fixes $L_j$ (not necessarily pointwise) at all times, but such that the trajectories of the fixed points in $U_j$ have the same homotopy type as those of $R_1\varphi^{-1}$. In case $\Sigma_{g, p}$ is equipped with its standard symplectic structure, this just amounts to requiring $A$ on $U_j$ to have constant slope in the normal direction to $L_j$. We assume that $A$ is constant outside slightly larger tubular neigbourhoods of the $L_j$. The time-1 map defined by $A$ defines a symplectic diffeomorphism of $\Sigma_{g, p}\setminus \bigcup L_j$ whose isotopy class in the Symplectic Mapping Class Group is represented by the same product of Dehn twists as $\mathfrak{R}$. Since the Hamiltonian diffeomorphism of $\Sigma_{g, p}$ defined by $A$ is also supported in small tubular neighbourhoods of the $L_j$, we can replace $R_t$ with $(\varphi^t_A)^{-1} R_t$. It satisfies the same hypotheses as $R_t$, so that the resulting $\mathfrak{R}$ is now isotopic to the identity via compactly supported symplectomorphisms of $\Sigma_{g,p}\setminus (\bigcup L_j)$.\vspace{0.2 cm}

\textbf{Step 2}: $R$ may be chosen so that $\mathfrak{R}$ is Hamiltonian.

\vspace{0.2 cm}
\noindent
Consider an autonomous Hamiltonian $A_j$, supported in $U_j$, which is  constant and non-zero on a smaller neighbourhood of $L_j$. 
The Hamiltonian vector field of $A_j$ is by definition compactly supported inside $U_j$ but outside of a small neighbourhood of $L_j$, 
so that the time-1 map of $A_j$ defines a compactly supported symplectic diffeomorphism $\xi_j$ of $\Sigma_{g,p}\setminus \bigcup L_j$. Crucially, $\xi_j$ cannot be Hamiltonian as it has non-zero Flux. In Lemma \ref{lem:fluxNonZero} we are going to prove in fact that the $Flux(\xi_j)$ are linearly independent. Let us suppose this to be true for the moment.

We now claim that there are real numbers $c_1, \dots c_g$ such that
\begin{equation*}
    Flux\left(\mathfrak{R}\right)=\sum_{j=1}^gc_jFlux(\xi_j).
\end{equation*}
If the claim is verified, it suffices to compose $R_t$ with $(\varphi^t_{\sum_jc_jA_j})^{-1}$. The proof of this claim is included in Lemma \ref{lem:spanFluxXij}, since it uses the proof of Lemma \ref{lem:fluxNonZero}.
Therefore, we may assume that there exists a Hamiltonian $K$ supported away from the $L_j$ generating $R_1\varphi^{-1}$. Finally, we define $\psi_t=(\phi_K^t)^{-1}R_t(\varphi_{t})^{-1}$. Clearly, $\psi$ is a loop, and \[\psi_t\varphi_t(L_j)=(\phi_K^t)^{-1}R_t(\varphi_{t})^{-1}\varphi_t(L_j)=L_j\] since $R_t$ and $\phi_K^{t}$ fix $L_j$ at all time $t$.
\end{proof}

\begin{lem}\label{lem:fluxNonZero}
    The collection of the $Flux(\xi_j)$ in the proof of Lemma \ref{lem:loopHamDif} is linearly independent.
\end{lem}
\begin{proof}
    Let $\tilde{\omega}$ be the symplectic form induced on $\Sigma_{g, p}\setminus\bigcup L_j$ by the inclusion in $\Sigma_{g, p}$. Let $X_j$ be the Hamiltonian vector field of $A_j$, and $\tilde{X}_j$ be its restriction to $\Sigma_{g, p}\setminus\bigcup L_j$. By definition, we have the identities:
\begin{equation}\label{eq:equalityVectFields}
    \iota_{\tilde{X}_j}\tilde{\omega}=\iota_{X_j}\omega=-dA_j
\end{equation}
wherever both sides of the first equality are defined. Computing $Flux(\xi_j)$ (see \cite[Exercise 10.2.6]{mcDSal16}) requires us to integrate the 1-form $\iota_{\tilde{X}_j}\tilde{\omega}$ on the generators of 
\begin{equation*}
    H_1(\overline{\Sigma_{g, p}\setminus\bigcup L_j}, \partial(\overline{\Sigma_{g, p}\setminus\bigcup L_j}); \R).
\end{equation*}
The bar stands for the compactification obtained adding the boundary to every end. The surface $\overline{\Sigma_{g, p}\setminus\bigcup L_j}$ has then genus 0 and $2g+p$ boundary components. Each non-contractible $L_j$ we subtract gives rise to two boundary components. Recall that we are not subtracting the contractible components of the Lagrangian link.

A basis for the above homology group may for instance be constructed by fixing one boundary component of $\Sigma_{g, p}$, and taking a family of smooth curves with one endpoint on it, and the other one on a second boundary component of $\Sigma_{g, p}\setminus\bigcup L_j$. We assume that no two curves constructed this way have second endpoint on the same boundary component, which guarantees linear independence. Let $\gamma_i$ be such a curve, parametrised on the interval $[0, 1]$, connecting the fixed boundary component of $\Sigma_{g, p}$ and one of the two boundary components corresponding to $L_i$. Let $h_i$ be the homology class represented by $\gamma_i$. Then by Equation (\ref{eq:equalityVectFields}) above and the Fundamental Theorem of Calculus:
\begin{equation}\label{eq:fluxAj}
    Flux(\xi_j)(h_i)=\int_0^1\iota_{\tilde{X}_j}\tilde{\omega}(\dot{\gamma}_i(t))\, dt=-\int_0^1dA_j(\dot{\gamma}_i(t))\, dt=A_j(\gamma_i(0))-A_j(\gamma_i(1))
\end{equation}
The notation $\dot{\gamma}_i$ indicates the derivative of the path $\gamma_i$. Recall now that $A_j$ is non-zero only on $U_j$. We infer that:
\begin{equation}\label{eq:fluxAjDelta}
    Flux(\xi_j)(h_i)=-\delta_{i, j}A_j(L_j),
\end{equation}
where $\delta_{i, j}$ is Kronecker's delta and $A_j(L_j)$ is the value that $A_j$ takes on any point of the circle $L_j$. This proves that the collection of the $Flux(\xi_j)$ is linearly independent, and in particular that $\xi_j$ has non-zero $Flux$.
\end{proof}
\begin{lem}\label{lem:spanFluxXij}
    There exist real numbers $c_1, \dots, c_g$ such that \begin{equation*}
        Flux(\mathfrak{R})=\sum_{i=1}^gc_jFlux(\xi_j)
    \end{equation*}
\end{lem}

\begin{proof}
    We adopt the description from Lemma \ref{lem:fluxNonZero} of \begin{equation*}
        H_1(\overline{\Sigma_{g, p}\setminus\bigcup L_j}, \partial(\overline{\Sigma_{g, p}\setminus\bigcup L_j}); \R).
    \end{equation*}
The left-hand side of the equation is 0 whenever it is evaluated on the relative homology classes represented by smooth curves connecting two boundary components of $\Sigma_{g,p}$: such curves define homology classes in $H_1(\Sigma_{g, p}, \partial\Sigma_{g, p};\R)$, where $R$ is Hamiltonian. An application of Equation (\ref{eq:equalityVectFields}) with the fact that the $Flux$ of Hamiltonian isotopies is 0 concludes.
This is not yet enough to prove that such $c_j$ exist. Denote for each $j=1,\dots, g$ by $\gamma_{\pm, j}$ the two curves in the basis of the relative homology group with second endpoint on the two different boundary components associated to $L_j$. Let $h_{\pm, j}$ be the two associated homology classes, and $h^*_{\pm, j}$ the element of the associated dual basis for the cohomology. From Equation (\ref{eq:fluxAjDelta}) then we obtain
\begin{equation*}
    Flux(\xi_j)=-A_j(L_j)(h^*_{+, j}+h^*_{-, j}).
\end{equation*}
It is enough, to prove existence of the $c_j$, to show that $Flux(\mathfrak{R})$ takes the same value on $h_{+, j}$ and $h_{-, j}$. This is true: consider the concatenation $\gamma_{+, j}\#\overline{\gamma_{-,k}}$ (we assume without loss of generality that the endpoints coincide and that the concatenation is smooth), where the bar indicates inversion of time. It defines a homology class in $H_1(\Sigma_{g, p}, \partial\Sigma_{g, p};\R)$. Since $R_t$ is a Hamiltonian isotopy, so is the one denoted by $(R'_t)_{[0, 1]}$, defined by\begin{equation*}
    R'_t=\begin{cases}
        R_{2t}& t\in \left[0,\frac{1}{2}\right]\\
        R_{2t-1}\circ R_1& t\in \left[\frac{1}{2},1\right]
    \end{cases}.
\end{equation*}We have then
\begin{equation*}
    Flux(R'_t)(\gamma_{+, j}\#\overline{\gamma_{-,k}})=0.
\end{equation*}

On the other hand, if $H_t$ is a $1$-periodic Hamiltonian generating $R_t$, then $R'_t$ is generated by $H'_t=2H_{2t}$, and we have
\begin{align*}
    Flux(R'_t)&(\gamma_{+, j}\#\overline{\gamma_{-,k}})=-\int_0^1dH'_{t}\left(\frac {d}{dt}(\gamma_{+, j}\#\overline{\gamma_{-,k}})\right)\, dt\\
    &=-2\int_0^{\frac 1 2}dH_{2t}\left(2\dot{\gamma}_{+, j}(2t)\right)\, dt -2\int_{\frac 1 2}^1dH_{2t-1}\left(2\frac{d}{dt}\overline{\gamma_{-,k}}(2t-1)\right)\, dt\\
    &=-2\int_0^{1}dH_{t}\left(\dot{\gamma}_{+, j}(t)\right)\, dt -2\int_0^1dH_{t}\left(\frac{d}{dt}\overline{\gamma_{-,k}}(t)\right)\, dt\\
    &=2 (Flux(\mathfrak R)(h_{+,j})+Flux(\mathfrak R)(-h_{-,j}))
\end{align*}
which implies as we wanted to show that $Flux(\mathfrak R)(h_{+,j})$ and $Flux(\mathfrak R)(h_{-,j})$ coincide.

This finishes the proof of the Lemma.
\end{proof}

\begin{rem}
If $g=0, p=1$, Lemma \ref{lem:imb} proves surjectivity of the braid type function on the disc.
\end{rem}

\section{Proof of the main result}\label{sec:proofMainResult}
\subsection{Sketch of the proof of Theorem \ref{thm:braidPersistenceGenus} for the disc}\label{sec:proofResultDisc}
We are going to recall the main ideas behind the proof contained in \cite{M}. 

In the class of Hamiltonian diffeomorphisms $\Ham_{\underline L}(\Sigma_{g, p})$, the generators of $CF(\underline L, H)$ are essentially representatives of the braid class of $b(\varphi)$, and the reference path a trivial braid. If $\hat{y}$ is a capped Hamiltonian chord, the intersection product $[\hat{y}]\cdot\Delta$ can therefore be connected to the linking number of $b(\varphi)$: we aim to then isolate this contribution in the expression of the action.

We are lead to consider a difference of spectral invariants: we embed the disc $\D$ symplectically into spheres of areas $1+s_i$ ($i=1,2$), with $0\leq s_i\leq (k+1)A-1$, in such a way that the obtained Lagrangian links are monotone with same $\lambda$ but different $\eta$. We thus want to compute \begin{equation}\label{eq:diffSpecInv}
    c_{\underline{L}_{s_1}}(\varphi)-c_{\underline{L}_{s_2}}(\varphi)
\end{equation}
where the $\underline{L}_{s_i}$ are the two obtained Lagrangian links.

We define a biholomorphism between the symmetric products: this induces a chain isomorphism between the two $CF(\underline L_{s_i}, H)$, but such isomorphism does not respect the action filtration. The difference (\ref{eq:diffSpecInv}) is exactly the failure to respect the action filtration of this chain isomorphism. This means that the spectral invariants in (\ref{eq:diffSpecInv}) may be computed using capped Hamiltonian chords related by the biholomorphism. Since the two Lagrangian links share furthermore the same parameter $\lambda$, we may change the capping of this intersection point freely; without loss of generality therefore we may assume that such capping is contained in $\Sym^k\D\subset \Sym^k\sphere^2$. The proof ends showing that under this assumption
\begin{equation*}
    c_{\underline{L}_{s_1}}(\varphi)-c_{\underline{L}_{s_2}}(\varphi)= (\eta_{s_2}-\eta_{s_1})[\hat{y}]\cdot \Delta
\end{equation*}
where $\hat{y}$ is any capped Hamiltonian chord of $\Sym H$, showing that the intersection product coincides with the linking number up to a factor of $-2$. Choosing optimal $s_i$ and applying the Hofer Lipschitz property of the difference in (\ref{eq:diffSpecInv}) yields the main result of \cite{M}.
\subsection{Intersections in the symmetric product}\label{sec:intersections}
We define elementary intersections between two strands: they are local descriptions for intersections counting towards $u\cdot \Delta$ which do not admit a lift to the cartesian product. The goal of this section is to prove that elementary intersections are transverse in the symmetric product, and may thus be used to compute the intersection product with the diagonal.
\vspace{0.2 cm}

First off, let us reduce the computation of the intersection product to counting the intersections between two strands.

Let $u:[0, 1]\times [0, 1]\rightarrow \Sym^{k+g}(\Sigma_{g, p})$ be a capping for an intersection point between the $\Sym^{k+g}\underline L$ and $\Sym^{k+g}(\varphi(\underline L))$. Assume that for some $(s_0, t_0)$, $u(s_0, t_0)\in\Delta$. Generically, $u(s_0, t_0)$ belongs to the top stratum of the diagonal, which means that considering the path in the symmetric product
\begin{equation*}
    t\mapsto u(s_0, t)
\end{equation*}
as a collection of $k+g$ curves, at $t_0$ exactly two of them coincide. Furthermore, as we need to compute the sign of an intersection and this is a local problem, we may assume without loss of generality that the intersection at $(s_0, t_0)$ is the only intersection between $u$ and $\Delta$. Denote the two intersection paths in $u$ by $u_2:[0, 1]\times [0, 1]\rightarrow \Sym^2\Sigma_{g}$, and the 2-dimensional diagonal by $\Delta_2$.

\begin{lem}[Locality of the intersection problem]\label{lem:locality}
In the setting as above, the sign of the intersection at $(s_0, t_0)$ between $u$ and $\Delta$ is the same as the sign of the intersection at $(s_0, t_0)$ between $u_2$ and $\Delta_2$.
\end{lem}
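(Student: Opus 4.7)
My plan is to exploit the local product structure of the symmetric product near a top-stratum point of the diagonal, and observe that the intersection sign is determined by the factor where the diagonal actually has nontrivial codimension.

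First, I would set up local coordinates. At the point $u(s_0,t_0) \in \Delta$ lying in the top stratum, the underlying unordered tuple has the form $\{x, x, y_3, \dots, y_{k+g}\}$ with $x, y_3, \dots, y_{k+g}$ pairwise distinct. Choose pairwise disjoint open disks $U$, $U_3$, \dots, $U_{k+g}$ in $\Sigma_g$ around $x, y_3, \dots, y_{k+g}$ respectively. Standard results on symmetric products of surfaces give a canonical biholomorphism between a neighbourhood of $u(s_0,t_0)$ in $\Sym^{k+g}(\Sigma_g)$ and the product
\[
\Sym^2(U) \times U_3 \times \cdots \times U_{k+g},
\]
and under this biholomorphism the top stratum of the diagonal $\Delta$ corresponds precisely to $\Delta_2 \times U_3 \times \cdots \times U_{k+g}$, where $\Delta_2 \subset \Sym^2(U)$ is the 2-dimensional diagonal. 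Because this decomposition is holomorphic, the complex (hence symplectic) orientations on both sides agree.

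Next, I would decompose the map. Shrinking the $[0,1]^2$ domain to a small neighbourhood $V$ of $(s_0,t_0)$ (which we may do without changing intersection numbers), the genericity assumption that only two strands coincide and that there are no other nearby intersections implies that the image of $u|_V$ sits inside the above product chart. Write $u|_V = (u_2, u_3, \dots, u_{k+g})$ accordingly, so $u_2 : V \to \Sym^2(U)$ captures the two colliding strands and each $u_i : V \to U_i$ captures a single strand staying away from everything.

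Now I would compute the intersection sign. At a transverse intersection of $u$ with $\Delta$, the sign is the sign of the isomorphism
\[
du(T_{(s_0,t_0)} V) \oplus T_{u(s_0,t_0)} \Delta \;\xrightarrow{\sim}\; T_{u(s_0,t_0)}\Sym^{k+g}(\Sigma_g),
\]
oriented by the ambient complex orientation. Under the product splitting, $T\Delta$ contains the entire $\bigoplus_{i\geq 3} T U_i$ factor, so the sign depends only on the projection of $du$ to $T\Sym^2(U) / T\Delta_2$. That projection is exactly $d u_2$ composed with the quotient by $T\Delta_2$. Hence the transversality of $u$ to $\Delta$ at $(s_0,t_0)$ is equivalent to that of $u_2$ to $\Delta_2$, and the two intersection signs are equal.

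The only subtle point — which I would pin down carefully — is the compatibility of orientations. Since the local splitting of $\Sym^{k+g}(\Sigma_g)$ is holomorphic and the diagonal, the subspaces $T\Delta_2$, $TU_i$ are all complex subspaces, the decomposition of the ambient orientation as the product of the orientations on $\Sym^2(U)$ and on $U_3, \dots, U_{k+g}$ is automatic; similarly for the orientation of $\Delta$. This is where I expect the only real bookkeeping, but no genuine obstacle, and the lemma follows.
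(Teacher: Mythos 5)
Your proposal is correct and follows essentially the same route as the paper: both exploit the local holomorphic product/slice structure of $\Sym^{k+g}(\Sigma_g)$ near a top-stratum point of $\Delta$ and use the complex orientations to reduce the sign computation to the $\Sym^2$ factor. The only (minor) difference is that you pass to the quotient $T\Sym^2(U)/T\Delta_2$, which automatically absorbs the non-colliding strands, whereas the paper first normalises the capping so that those strands are locally constant and then evaluates a block determinant; the two bookkeeping devices are equivalent.
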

\begin{proof}
Let us consider $u$ as above. Since $u$ intersects $\Delta$ exactly one and in the highest stratum, it factors through $\Sym^2 (\Sigma_g)\times \Sigma_g^{k+g-2}$ as in the following commutative diagram:
\[
\begin{tikzcd}
{[0,  1]^2} \arrow[r, "\tilde{u}"]\arrow[d, "u"]&\Sym^2 (\Sigma_g)\times \Sigma_g^{k+g-2}\arrow[dl, "p"]\\\
\Sym^g (\Sigma_g)&
\end{tikzcd}.
\]
In the diagram, $p$ represents the quotient projection. Now,  the first component of $\tilde{u}$ is exactly $u_2$, and by assumption
\begin{equation*}
    \tilde{u}\cdot(\Delta_2\times \Sigma_g^{k+g-2})=u_2\cdot \Delta_2.
\end{equation*}
Since the intersection pairing is functorial we have the equalities
\begin{equation*}
    u\cdot\Delta=(p\circ\tilde{u})\cdot\Delta=\tilde{u}\cdot(\Delta_2\times \Sigma_g^{k+g-2})=u_2\cdot \Delta_2
\end{equation*}
which prove what we wanted.

\end{proof}

\begin{defn}
    Let $(s, t)\in [0, 1]^2$. We say that a capping $u$ has an elementary intersection at $(s, t)$ with $\Delta$ if $u(s, t)\in \Delta$ and if there exist  charts as those defined in Lemma \ref{lem:locality} around $u(s, t)$ such that $u_2$ coincides with the roots of the complex polynomial
    \begin{equation*}
        X^2-(s+it)
    \end{equation*}
\end{defn}

\begin{lem}Elementary intersections between two strands are transverse.
\end{lem}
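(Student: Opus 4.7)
The plan is to reduce the transversality question to a direct calculation in the complex chart of $\Sym^2(\Sigma_g)$ constructed in the proof of Lemma \ref{lem:locality}. By definition of an elementary intersection, after passing to these charts the ambient manifold is identified with $\C^2$ via the elementary symmetric polynomials $[y_1, y_2]\mapsto (y_1+y_2, y_1y_2)=:(a,b)$, and the two-strand piece $u_2$ of the capping is given by the roots of $X^2-(s+it)$. By Vieta's formulas, I would first observe that in these coordinates
\begin{equation*}
u_2(s,t) = (0,\, -(s+it)),
\end{equation*}
so that the image of $u_2$ is contained in the complex line $\{a=0\}$.

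Next I would describe the diagonal $\Delta_2$ in the same chart. A point $[y_1, y_2]$ lies on $\Delta_2$ exactly when $y_1=y_2$, equivalently when the discriminant $(y_1+y_2)^2 - 4y_1y_2$ of the corresponding quadratic vanishes. Hence in the $(a,b)$-coordinates
\begin{equation*}
\Delta_2 = \{(a,b)\in \C^2 \,:\, a^2 - 4b = 0\},
\end{equation*}
and at the origin (the image of the intersection point $(s,t)=(0,0)$) the gradient of $a^2 - 4b$ is $(0,-4)\neq 0$, so $\Delta_2$ is smooth there with tangent space $T_0\Delta_2 = \C\times\{0\}$.

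Finally I would compute $T_0(\Im u_2)$: the real partial derivatives at $(s,t)=(0,0)$ are $\partial_s u_2 = (0,-1)$ and $\partial_t u_2 = (0,-i)$, so $T_0(\Im u_2) = \{0\}\times \C$. Since
\begin{equation*}
T_0\Delta_2 \oplus T_0(\Im u_2) = (\C\times\{0\})\oplus(\{0\}\times\C) = \C^2 = T_0 \Sym^2(\C),
\end{equation*}
the two tangent spaces are complementary, which is precisely the transversality statement. There is no real obstacle here — the only subtlety is ensuring that one works in the Vieta chart adapted to the two colliding strands, which is exactly what the definition of an elementary intersection provides; the rest is an immediate linear-algebra verification.
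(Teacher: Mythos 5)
Your proof is correct and follows essentially the same route as the paper: pass to the Vieta coordinates $(a,b)=(y_1+y_2,y_1y_2)$, observe that the elementary intersection becomes $(0,-(s+it))$ with the diagonal cut out by $a^2-4b=0$, and check that the two tangent planes at the origin span $\C^2$ over $\R$. The only cosmetic difference is that you describe $\Delta_2$ implicitly via the discriminant while the paper parametrises it as $(a,a^2/4)$; the resulting tangent-space computation is identical.
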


\begin{proof}We will start by considering a homotopy $\gamma : [-1,1]_s\times [-1,1]_t \rightarrow \Sym^2(\C)$ between two braids $\gamma_{-}$ and $\gamma_+$.
There is a diffeomorphism $\phi:\C^2\rightarrow \Sym^2(\C)$ which maps $(a,b)\in \C^2$ to the pair of roots of the degree 2 polynomial $X^2-aX+b$ (its inverse being given by $\phi^{-1}([x_-,x_+])=(x_-+x_+, x_-x_+)$).
Through this diffeomorphism, the diagonal $\Delta\subset \Sym^2(\C)$ corresponds to the set \[\phi^{-1}(\Delta)=\left\{(2x,x^2),x\in \C\right\}=\left\{\left(a,\frac {a^2}{4}\right),a\in \C\right\}\]whose tangent space at $(a,\frac {a^2}{4})$ is the complex vector subspace generated by $(1,\frac a 2)$, i.e. the real vector subspace generated by $(1,\frac a 2)$ and $(i,\frac {ia} 2)$.
Let $(a(s,t),b(s,t)):=\phi^{-1}(\gamma(s,t))$. Assume that $\gamma$ intersects the diagonal at $(s,t)=(0,0)$.
Then, the intersection is transverse if and only if the vectors $(1,\frac {a(0,0)} 2)$, $(i,\frac {ia(0,0)} 2)$, $(\partial_s a(0,0),\partial_s b(0,0))$ and $(\partial_t a(0,0),\partial_t b(0,0))$ generate the whole space $\C^2$ as a real vector space.

In particular, consider the case where $\gamma(s,t)$ is the pair of square roots of $s+it$. Then, $\gamma$ intersects the diagonal at $(0,0)$, and $(a(s,t),b(s,t))=(0,-s-it)$. Therefore, we get that the intersection is transverse if and only if the vectors $(1,0),(i,0),(0,-1),(0,-i)$ generate $\C^2$. Since this is true, we have transversality of the intersection.

\begin{figure}
        \centering
        \def\svgwidth{\columnwidth}
\begingroup%
  \makeatletter%
  \providecommand\color[2][]{%
    \errmessage{(Inkscape) Color is used for the text in Inkscape, but the package 'color.sty' is not loaded}%
    \renewcommand\color[2][]{}%
  }%
  \providecommand\transparent[1]{%
    \errmessage{(Inkscape) Transparency is used (non-zero) for the text in Inkscape, but the package 'transparent.sty' is not loaded}%
    \renewcommand\transparent[1]{}%
  }%
  \providecommand\rotatebox[2]{#2}%
  \newcommand*\fsize{\dimexpr\f@size pt\relax}%
  \newcommand*\lineheight[1]{\fontsize{\fsize}{#1\fsize}\selectfont}%
  \ifx\svgwidth\undefined%
    \setlength{\unitlength}{473.46696989bp}%
    \ifx\svgscale\undefined%
      \relax%
    \else%
      \setlength{\unitlength}{\unitlength * \real{\svgscale}}%
    \fi%
  \else%
    \setlength{\unitlength}{\svgwidth}%
  \fi%
  \global\let\svgwidth\undefined%
  \global\let\svgscale\undefined%
  \makeatother%
  \begin{picture}(1,0.32530459)%
    \lineheight{1}%
    \setlength\tabcolsep{0pt}%
    \put(0,0){\includegraphics[width=\unitlength,page=1]{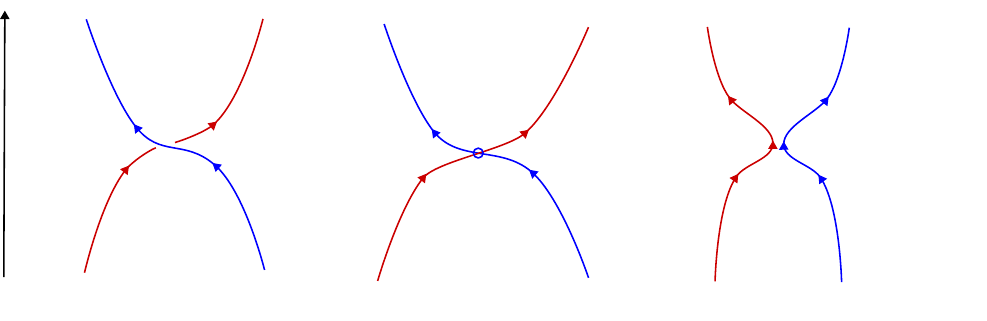}}%
    \put(0.01653124,0.30988351){\color[rgb]{0,0.03137255,0.03137255}\makebox(0,0)[lt]{\lineheight{1.25}\smash{\begin{tabular}[t]{l}$t$\end{tabular}}}}%
    \put(0.10903787,0.00471941){\color[rgb]{0,0,0}\makebox(0,0)[lt]{\lineheight{1.25}\smash{\begin{tabular}[t]{l}$t\mapsto \gamma(-0.01, t)$\end{tabular}}}}%
    \put(0.41206082,0.00471941){\color[rgb]{0,0,0}\makebox(0,0)[lt]{\lineheight{1.25}\smash{\begin{tabular}[t]{l}$t\mapsto \gamma(0, t)$\end{tabular}}}}%
    \put(0.72106498,0.00471941){\color[rgb]{0,0.03137255,0.03137255}\makebox(0,0)[lt]{\lineheight{1.25}\smash{\begin{tabular}[t]{l}$t\mapsto \gamma(0.01, t)$\end{tabular}}}}%
  \end{picture}%
\endgroup%

        \caption{The homotopy $\gamma(s,t)$}
\end{figure}
\begin{figure}
     \centering
\begingroup%
  \makeatletter%
  \providecommand\color[2][]{%
    \errmessage{(Inkscape) Color is used for the text in Inkscape, but the package 'color.sty' is not loaded}%
    \renewcommand\color[2][]{}%
  }%
  \providecommand\transparent[1]{%
    \errmessage{(Inkscape) Transparency is used (non-zero) for the text in Inkscape, but the package 'transparent.sty' is not loaded}%
    \renewcommand\transparent[1]{}%
  }%
  \providecommand\rotatebox[2]{#2}%
  \newcommand*\fsize{\dimexpr\f@size pt\relax}%
  \newcommand*\lineheight[1]{\fontsize{\fsize}{#1\fsize}\selectfont}%
  \ifx\svgwidth\undefined%
    \setlength{\unitlength}{257.50220832bp}%
    \ifx\svgscale\undefined%
      \relax%
    \else%
      \setlength{\unitlength}{\unitlength * \real{\svgscale}}%
    \fi%
  \else%
    \setlength{\unitlength}{\svgwidth}%
  \fi%
  \global\let\svgwidth\undefined%
  \global\let\svgscale\undefined%
  \makeatother%
  \begin{picture}(1,0.27195916)%
    \lineheight{1}%
    \setlength\tabcolsep{0pt}%
    \put(0,0){\includegraphics[width=\unitlength,page=1]{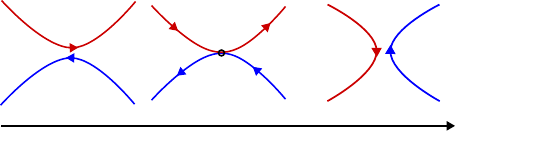}}%
    \put(0.78209045,0.00625034){\color[rgb]{0,0,0}\makebox(0,0)[lt]{\lineheight{1.25}\smash{\begin{tabular}[t]{l}$s$\end{tabular}}}}%
    \put(0.0007308,0.05350196){\color[rgb]{0,0,0}\makebox(0,0)[lt]{\lineheight{1.25}\smash{\begin{tabular}[t]{l}$t\mapsto \gamma(-0.01, t)$\end{tabular}}}}%
    \put(0.31950197,0.05350196){\color[rgb]{0,0,0}\makebox(0,0)[lt]{\lineheight{1.25}\smash{\begin{tabular}[t]{l}$t\mapsto \gamma(0, t)$\\\end{tabular}}}}%
    \put(0.58842924,0.05350196){\color[rgb]{0,0,0}\makebox(0,0)[lt]{\lineheight{1.25}\smash{\begin{tabular}[t]{l}$t\mapsto \gamma(-0.01, t)$\end{tabular}}}}%
  \end{picture}%
\endgroup%

     \caption{The homotopy $\gamma(s, t)$ as seen on $\C$.}
        \label{fig:elementary intersection}
\end{figure}
\end{proof}

\begin{rem}
From the proof we see that the sign of the intersection we studied above as local model is positive.
\end{rem}
\begin{rem}
The homotopy $s+it\mapsto \sqrt{(s+it)}\in \Sym^2\C$ does not lift to a function to $\C^2$. If we, in more generality, suppose that an intersection is transverse, the capping cannot be lifted to $\C^2$. Assume in fact that an intersection counting towards $[u]\cdot\Delta$ does lift to $\C^2$ (we take $k=2$ in light of Lemma \ref{lem:locality}). Then it cannot be transverse: up to time-translation, assume the intersection appears at $(s, t)=(0,0)$. Define
\begin{equation*}
    (\gamma_1, \gamma_2): (-\varepsilon, \varepsilon)\times(-\varepsilon, \varepsilon)\rightarrow \C^2
\end{equation*}
such that if $\pi: \C^2\rightarrow \Sym^2\C$ is the quotient projection, we locally have
\begin{equation*}
    \pi\circ (\gamma_1, \gamma_2)=u
\end{equation*}
Then $(\gamma_1, \gamma_2)(0, 0)\in \pi^{-1}(\Delta)$. The differential of $\pi$ being 0 there, so is the differential of $u$. This proves that the intersection of $u$ with $\Delta$ at $(0, 0)$ is not transverse.
\end{rem}

\subsection{Construction of the Hofer-Lipschitz function}\label{sec:constrFunction}

Let $\zeta_1,..., \zeta_p$ be the boundary components of $\Sigma_{g,p}$. Gluing discs of area $s_{i,j}$ along $\zeta_j$ for $i=1,2$, $j=1,...,p$ gives rise to two embeddings of $\Sigma_{g,p}$ into a closed surface of area $1+s_i$, $i=1,2$, denoted $\Sigma_g(1+s_i)$, where \begin{equation*}
    s_i=s_{i,1}+...+s_{i,p}\in(0, (k+1)A-1)
\end{equation*}
In the following, by $v_i$ we denote the vector
\begin{equation*}
    v_i:=(s_{i, 1}, \dots, s_{i, p})
\end{equation*}
and the set of possible values $v_1, v_2$ may take is
\begin{equation*}
    V:=\{(s_1,...,s_p)\in (\R_{\geq 0})^p|s_1+...+s_p\leq (k+1)A-1\}
\end{equation*}
Choose a diffeomorphism between the two closed surfaces: up to changing the complex structure on one of them it becomes a biholomorphism. We also assume this diffeomorphism to be the identity between small neighbourhoods of $\Sigma_{g, p}$ in $\Sigma_g(1+s_i)$.

This operation induces a bijective correspondence $\Phi$ between capping classes: it is the content of \cite[Lemma 3.2]{M} which still holds here (we need for this step the generalisation of monotonicity as proved in Section \ref{sec:monotonicity}). In our setting we may also prove, (see \cite[Lemmata 3.3, 3.4]{M} ), that gluing discs of these areas gives an isomorphism at the chain level of Floer complexes, and that this isomorphism commutes with the PSS maps. In \cite{M} the conventions are slightly different, and for the differential one counts holomorphic strips, while here the strips have to solve the Floer equation. This does not represent a problem in our context: since the Hamiltonian is 0 on the discs we glue, the biholomorphism between surfaces $\Sigma_g(1+s_i)$ still defines a chain map since it sends the Hamiltonian vector field on $\Sigma_g(1+s_1)$ to the one on $\Sigma_g(1+s_2)$.

We still need to compute the action shift induced by the change in the areas: we do this in the following lemma. Before stating it, we recall the basic notations and notions the computation involves.

Let $\varphi\in\Ham_{\underline L}(\Sigma_{g,p})$, and let $H$ be a Hamiltonian generating $\varphi$, supported in the interior of $\Sigma_{g,p}$. Let $\underline L_{v_i}$ be the image of $\underline L$ inside $\Sigma_g(1+s_i)$, $i=1,2$, and $H_{v_i}$ the extension by 0 of $H$ in the closed surfaces.
After a small perturbation of $H$, we can assume that the Heegaard-Floer complexes for $(\underline L_{v_i},H_{v_i})$, $i=1,2$ are well defined.
Using the bijective correspondence between cappings above, $[\hat y_1]$ corresponds to a generator $[\hat y_2]:=\Phi([\hat y_1])$ of $CF^*(\Sym^{k+g}H_{v_2},\Sym\underline L_{v_2})$.

\begin{lem}
\label{lem:action_difference}
    The difference of action $\mathcal A_{H_{v_1}}([\hat y_1])-\mathcal A_{H_{v_2}}([\hat y_2])$ does not depend on the choice of the generator $[\hat y_1]$.
\end{lem}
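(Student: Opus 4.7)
The plan is to rewrite the difference $D([\hat y_1]):=\mathcal A_{H_{v_1}}([\hat y_1])-\mathcal A_{H_{v_2}}([\hat y_2])$ explicitly, and then conclude invariance from the monotonicity identity of Proposition \ref{prp:monotonicity}, applied in each of the two settings with the common value of $\lambda$ supplied by Definition \ref{defn:pre-monLink}(\textit{iii}).

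First I would simplify $D$ by exploiting that the biholomorphism $\Psi:\Sigma_g(1+s_1)\to\Sigma_g(1+s_2)$ is the identity on a neighbourhood of $\Sigma_{g,p}$, and therefore $\Sym\Psi:X_1\to X_2$ is the identity on $\Sym^{k+g}(\Sigma_{g,p})$. Three consequences follow. The Hamiltonian integrals $\int_0^1\Sym H_{v_i,t}(y_i(t))\,dt$ agree in the two settings, because $H_{v_i}$ is supported in $\Sigma_{g,p}$ and $y_2=\Sym\Psi\circ y_1$. The diagonal intersection numbers satisfy $[\hat y_1]\cdot\Delta_1=[\hat y_2]\cdot\Delta_2=:[\hat y_1]\cdot\Delta$, because $\Sym\Psi$ maps $\Delta_1$ to $\Delta_2$ and $\Phi$ identifies cappings. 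Finally the difference $(\Sym\Psi)^*\omega_{X_2}-\omega_{X_1}$ is a 2-form supported near the symmetric products of the $p$ added discs. Combining these,
\[
D([\hat y_1])=\bigl((\Sym\Psi)^*\omega_{X_2}-\omega_{X_1}\bigr)([\hat y_1])+(\eta_2-\eta_1)\,[\hat y_1]\cdot\Delta.
\]

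Next I would argue that this expression does not depend on $[\hat y_1]$. Given another capped generator $[\hat y_1']$, the formal difference is represented by a class $[\alpha]$ in the image of $\pi_2(X_1,\Sym\underline L_{v_1})\to H_2(X_1,\Sym\underline L_{v_1})$: if both cap the same Hamiltonian chord then $[\alpha]$ is a sphere class, while if the underlying chords differ one closes up the difference by a topological strip between them, available because both chords are homotopic to the reference constant path $x$ by definition of a generator. Applying Proposition \ref{prp:monotonicity} to $[\alpha]$ in each setting gives
\[
\omega_{X_i}([\alpha])+\eta_i\,[\alpha]\cdot\Delta=\frac{\lambda}{2}\mu([\alpha]),\qquad i=1,2.
\]
Because $\lambda$ is the same in both settings, subtracting yields
\[
\bigl((\Sym\Psi)^*\omega_{X_2}-\omega_{X_1}\bigr)([\alpha])+(\eta_2-\eta_1)\,[\alpha]\cdot\Delta=0,
\]
which is exactly $D([\hat y_1])-D([\hat y_1'])$. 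Hence $D$ is constant on the set of capped generators, proving the claim.

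The main obstacle I expect is the topological step of representing $[\hat y_1]-[\hat y_1']$ as a class in the image of $\pi_2(X_1,\Sym\underline L_{v_1})\to H_2(X_1,\Sym\underline L_{v_1})$ when the underlying chords differ, since the naive glueing of the two cappings yields a 2-chain whose boundary also meets $\Sym\varphi(\underline L_{v_1})$. I would handle this by choosing the connecting topological strip inside $\Sym^{k+g}(\Sigma_{g,p})$ (where both chords live, and where $\omega_{X_1}$ and $(\Sym\Psi)^*\omega_{X_2}$ coincide) and invoking a mild extension of Proposition \ref{prp:monotonicity} to such boundary data, provable by the very same Perutz branched-cover argument used in Section \ref{sec:monotonicity}.
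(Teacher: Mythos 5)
Your overall route is the same as the paper's: cancel the Hamiltonian terms using that the biholomorphism is the identity on $\Sigma_{g,p}$, apply Proposition \ref{prp:monotonicity} in both settings with the common $\lambda$ to get independence of the capping, and then connect two different chords by a strip inside $\Sym^{k+g}(\Sigma_{g,p})$. The capping-independence step is correct; only note that the glued-up difference of two cappings of the \emph{same} chord is a disc class with boundary on $\Sym\underline L$ (the two constant sides collapse, the other two sides lie in the Lagrangian), not a sphere class — harmless, since Proposition \ref{prp:monotonicity} applies to the whole image of $\pi_2(X,\Sym\underline L)$.

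The genuine gap is in the chord-independence step. Once you insert a connecting strip $w_1$ from $y_1$ to $y_1'$ whose endpoint arcs lie in $\Sym\underline L$, the glued class already has boundary on $\Sym\underline L$, so Proposition \ref{prp:monotonicity} applies verbatim and no ``mild extension'' is needed; but the subtraction then produces, up to sign,
\[
D([\hat y_1])-D([\hat y_1'])=\bigl((\Sym\Psi)^*\omega_{X_2}-\omega_{X_1}\bigr)([w_1])+(\eta_2-\eta_1)\,[w_1]\cdot\Delta ,
\]
and your choices only kill the first term (by taking $w_1$ inside $\Sym^{k+g}(\Sigma_{g,p})$, where the two forms agree). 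The second term is not controlled: the diagonal meets $\Sym^{k+g}(\Sigma_{g,p})$, and a generic strip there can intersect $\Delta$ with nonzero algebraic count. The missing input — which is exactly the point the paper's proof hinges on — is that $y_1$ and $y_1'$, after sliding their endpoints inside the circles of $\underline L$, represent the \emph{same braid type} (both close up to $b(\varphi)$), so $w_1$ may be chosen to be a braid isotopy, i.e.\ disjoint from $\Delta$, which makes $[w_1]\cdot\Delta=0$ as well. Without this observation the argument does not close, and the Perutz branched-cover construction of Section \ref{sec:monotonicity} will not substitute for it, since it computes invariants of the basic disc classes rather than of an arbitrary connecting strip.
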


\begin{proof}
    Start by fixing a generator $[\hat y_1]$.
    Since $H_{v_i}$, $i=1,2$ are supported in $\Sigma_{g,p}$, we get that $y_i$ are paths in $\Sym^{k+g}(\Sigma_{g,p})$. The biholomorphism between $\Sigma_g(1+s_1)$ and $\Sigma_g(1+s_2)$ restricts to the identity on $\Sigma_{g,p}$, so the two paths $y_1$ and $y_2$ coincide, and $H_{v_1}|_{y_1}=H_{v_2}|_{y_2}=H|_{y_1}$.
    Therefore, using the formula for the action, the terms containing the integral of the Hamiltonian will cancel out, leaving us with:
    \[\mathcal A_{H_{v_1}}([\hat y_1])-\mathcal A_{H_{v_2}}([\hat y_2])=\eta_{s_2}[\hat y_2]\cdot \Delta - \eta_{s_1}[\hat y_1]\cdot \Delta + \omega_{s_2}([\hat y_2])-\omega_{s_1}([\hat y_1])\]
    We first show that this quantity does not depend on the choice of capping for the path $y_1$. Let $[\hat y'_1]$ be another capping. We want to show that \begin{align*}
     \eta_{s_2}([\hat y_2]\#\Phi[\hat y'_1]^{-1})\cdot \Delta + &\omega_{s_2}([\hat y_2]\#\Phi[\hat y'_1]^{-1})-\\-&\left(\eta_{s_1}([\hat y_1]\#[\hat y'_1]^{-1})\cdot \Delta + \omega_{s_1}([\hat y_1]\#[\hat y'_1]^{-1})\right)=0   
    \end{align*}
    Since $[\hat y_1]\#[\hat y'_1]^{-1}$ and $\Phi[\hat y_1]\#\Phi[\hat y'_1]^{-1}$  are homotopies between $x$ and itself, it is in fact a disc with boundary on $\Sym\underline L$, so by monotonicity (Proposition \ref{prp:monotonicity}), \begin{align*}
        &\eta_{s_1}([\hat y_1]\#[\hat y'_1]^{-1})\cdot \Delta + \omega_{s_1}([\hat y_1]\#[\hat y'_1]^{-1})=\frac \lambda 2 \mu([\hat y_1]\#[\hat y'_1]^{-1})\\
        \eta_{s_2}(&\Phi[\hat y_1]\#\Phi[\hat y'_1]^{-1})\cdot \Delta + \omega_{s_2}(\Phi[\hat y_1]\#\Phi[\hat y'_1]^{-1})=\frac \lambda 2 \mu(\Phi[\hat y_1]\#\Phi[\hat y'_1]^{-1})
    \end{align*}Since the Maslov index is preserved by the bijective correspondence, the difference above vanishes, which proves that the difference of action does not depend on the choice of capping.
    Now we show that it does not depend on the path $y_1$ either. If $y'_1$ is another trajectory of $H_{v_1}$ between $\Sym\underline L_{v_1}$ and itself, then we can define a capping for $y'_1$ by choosing any capping for $y_1$ and concatenating it with any homotopy $[w_1]$ from $y_1$ to $y'_1$. Since we showed that the action difference does not depend on the capping, it is enough to find a single homotopy $[w_1]$ for which \begin{equation*}
        \eta_{s_2}(\Phi[w_1])\cdot \Delta + \omega_{s_2}(\Phi[w_1])-\left(\eta_{s_1}([w_1])\cdot \Delta + \omega_{s_1}([w_1])\right)=0
    \end{equation*}
    Such a homotopy exists: we can first find a homotopy sliding the end points of $y_1$ and $y_1'$ inside the circles so that they coincide (and therefore define braids), and then we can take a homotopy inside $\Sym^{k+g}(\Sigma_{g,p})$ that does not intersect $\Delta$, because $y_1$ and $y'_1$ have the same braid type.
\end{proof}

The previous Lemma implies that for any $\varphi\in\Ham_{\underline L}(\Sigma_{g,p})$, and any choice of generator $[\hat y_1]$, we have $c_{\underline L_{v_1}}(\varphi)-c_{\underline L_{v_2}}(\varphi)=\frac 1 {k+g}(\mathcal A_{H_{v_1}}([\hat y_1])-\mathcal A_{H_{v_2}}([\hat y_2]))$ as in \cite[Lemma 3.6]{M}. \begin{defn}
    We define $f_{v_1,v_2}:\Ham_c(\Sigma_{g, p})\rightarrow \R$ as
    \begin{equation*}
        f_{v_1,v_2}(\varphi):=c_{\underline L_{v_1}}(\varphi)-c_{\underline L_{v_2}}(\varphi)=\frac 1 {k+g}(\mathcal A_{H_{v_1}}([\hat y_1])-\mathcal A_{H_{v_2}}([\hat y_2]))
    \end{equation*}
\end{defn}

Moreover, we have:

\begin{lem} The map $f_{v_1,v_2}:\Ham_{\underline L}(\Sigma_{g,p})\to\R$ is a group homomorphism, which factorises over $b:\Ham_{\underline L}(\Sigma_{g,p})\to\mathcal B_{\underline L}$, i.e. there exists a group homomorphism $\mathfrak f_{v_1,v_2}:\mathcal B_{\underline L}\to\R$ such that $f_{v_1,v_2}= \mathfrak f_{v_1,v_2}\circ b$.
\end{lem}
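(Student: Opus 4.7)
The plan is to prove homomorphism and vanishing on $\ker b$ simultaneously, after which the factorisation through $b$ follows formally. By Lemma \ref{lem:action_difference}, for any capped generator $\hat y_1$ (and $\hat y_2 := \Phi(\hat y_1)$),
\[
(k+g)\,f_{v_1,v_2}(\varphi) \;=\; \delta(\varphi) \;:=\; (\eta_{s_2}-\eta_{s_1})[\hat y_1]\cdot\Delta + \omega_{s_2}([\hat y_2])-\omega_{s_1}([\hat y_1]),
\]
so $\delta$ is a purely topological quantity; both claims will be read off this formula.

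\emph{Step 1 (Homomorphism).} Given $\varphi,\psi\in\Ham_{\underline L,c}(\Sigma_{g,p})$, pick Hamiltonians $H^\varphi,H^\psi$ supported in $\Sigma_{g,p}$ and let $K$ be a concatenated-and-rescaled Hamiltonian generating $\varphi\psi$. A capped generator of $CF(K_{v_i})$ may be obtained by choosing matching capped generators $\hat y^\varphi_i, \hat y^\psi_i$ for $H^\varphi_{v_i}, H^\psi_{v_i}$ (using the freedom granted by Lemma \ref{lem:action_difference} to adjust endpoints inside $\Sym\underline L$) and gluing their chords and cappings along the common endpoint. The action splits additively,
\[
\mathcal A_{K_{v_i}}(\hat z_i) = \mathcal A_{H^\varphi_{v_i}}(\hat y^\varphi_i)+\mathcal A_{H^\psi_{v_i}}(\hat y^\psi_i),
\]
because $[\hat z_i]\cdot\Delta$ and $\omega_{s_i}(\hat z_i)$ are both additive under gluing of cappings, and the $\Sym H$-integrals decompose after the time rescaling. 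Subtracting for $i=1,2$ yields $\delta(\varphi\psi)=\delta(\varphi)+\delta(\psi)$.

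\emph{Step 2 (Vanishing on $\ker b$).} If $b(\varphi)=1$, any Hamiltonian isotopy from $\mathrm{id}$ to $\varphi$ traces a nullhomotopic braid on the base points in the configuration space of $\tilde\Sigma_{g,p}$ (as in Lemma \ref{lem:imb}). After a generic perturbation ensuring a transverse intersection point, the corresponding chord $y_1$ remains a nullhomotopic braid, and so admits a capping $\hat y_1$ realised inside $\Sym^{k+g}\Sigma_{g,p}\setminus\Delta$. Then $[\hat y_1]\cdot\Delta=0$, and since $\Phi$ restricts to the identity on $\Sigma_{g,p}$ (and hence on $\Sym^{k+g}\Sigma_{g,p}$) we have $\omega_{s_1}([\hat y_1])=\omega_{s_2}([\hat y_2])$. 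Therefore $\delta(\varphi)=0$.

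\emph{Conclusion and main obstacle.} Combining the two steps, $f_{v_1,v_2}$ is a group homomorphism vanishing on $\ker b$; setting $\mathfrak f_{v_1,v_2}(g) := f_{v_1,v_2}(\varphi)$ for any preimage $\varphi$ of $g$ yields a well-defined group homomorphism $\mathfrak f_{v_1,v_2}\colon\mathcal B_{\underline L}\to\R$ with $f_{v_1,v_2}=\mathfrak f_{v_1,v_2}\circ b$. The most delicate point is the concatenation in Step 1: chords of $H^\varphi$ and $H^\psi$ generically have endpoints on $\Sym\underline L$ that do not match, so one must pair them up using the freedom to change the generator (Lemma \ref{lem:action_difference}) and, if needed, deform the Hamiltonians. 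The cleanest way to sidestep this is to observe that Step 2 extends by the same capping argument to show that $\delta(\varphi)$ depends only on $b(\varphi)$: for then additivity of $\delta$ is automatic from the topological decomposition of cappings for composed braids, reducing Step 1 to a check on the generators $\sigma_j$, $a_i$, $b_i^{-1}a_ib_i$, $z_l$ of $\mathcal B_{\underline L}$.
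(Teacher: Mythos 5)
Your proposal is correct and follows essentially the same route as the paper: the paper also proves additivity by building a capping for a chord of the concatenated Hamiltonian out of $\hat y_1$ and $\hat y'_1$, connector homotopies inside the path-connected torus $\Sym\underline L_{v_1}$ (your ``endpoint matching''), and a braid isotopy inside $\Sym^{k+g}(\Sigma_{g,p})\setminus\Delta$, then observes that only the two original cappings contribute to the action difference; your Step 2 is the paper's closing remark that braid isotopies do not contribute, so $f_{v_1,v_2}$ depends only on $b(\varphi)$. The one point where the paper is more explicit than your Step 1 is that the glued object is not literally a chord of the concatenated Hamiltonian, so the braid-isotopy piece (on $[0,\frac12]\times[0,1]$ in Figure \ref{fig:capping_^z}) is needed to pass from the actual generator $z$ to the concatenation --- an ingredient your own concluding paragraph already supplies.
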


\begin{proof}
    Let $\varphi$, $\psi$ be in $\Ham_{\underline L}(\Sigma_{g,p})$. Let $H$ (resp. $H'$) be a Hamiltonian supported inside $\Sigma_{g,p}$ generating $\varphi$ (resp. $\psi$). Let $[\hat y_1]$ (resp. $[\hat y'_1]$) be a generator of $CF^*(\Sym^{k+g}H_{v_1},\Sym\underline L_{v_1})$ (resp. $CF^*(\Sym^{k+g}H'_{v_1},\Sym\underline L_{v_1})$.
Let $z:[0,1]\to \Sym^{k+g}\Sigma_{g,p}$ be a trajectory of $H\#H'$ from $\Sym\underline L$ to itself. We want to construct a capping $\hat {z}_1: [0,1]\times [0,1]\to \Sym^{k+g}\Sigma_{g}(1+s_1)$ from $z$ to the constant path $x$, using the cappings $\hat y_1$ and $\hat y'_1$.
We define it as follows (see Fig. \ref{fig:capping_^z}):

    \begin{figure}
	\centering
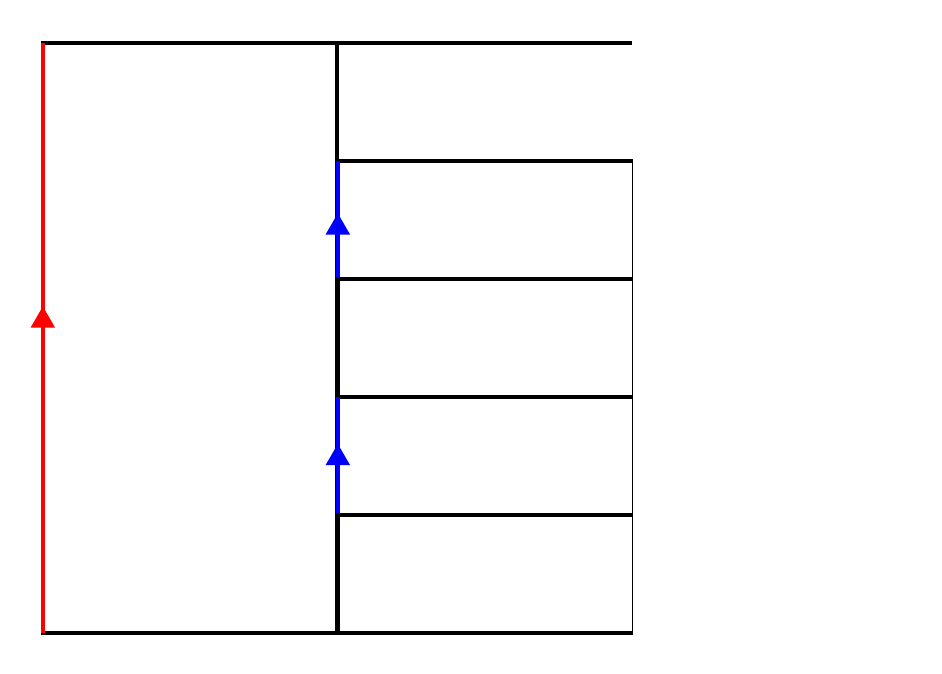
	\caption{The capping $\hat z_1$}
	\label{fig:capping_^z}
    \end{figure}

\begin{itemize}
    \item On $[\frac 1 2,1]\times [0,\frac 1 5]$:
    \begin{itemize}
        \item $\hat {z}_1([\frac 1 2,1]\times [0,\frac 1 5])\subset \Sym \underline L_{v_1}$
        \item for $0\leq t \leq \frac 1 5$, $\hat {z}_1(1,t)=x$
        \item $\hat {z}_1(\frac 1 2,0) = z(0)$
        \item for $\frac 1 2\leq s \leq 1$, $\hat {z}_1(s,\frac 1 5)=\hat {y}_1(2s-1,0)$
    \end{itemize}
    (Such a homotopy exists because $\Sym\underline L_{v_1}$ is path-connected, it is in fact isotopic to a Clifford torus, cf. \cite{CGHMSS22})
    \item On $[\frac 1 2,1]\times [\frac 1 5,\frac 2 5]$, $\hat {z}_1(s,t)=\hat {y}_1(2s-1,5t-1)$
    \item On $[\frac 1 2,1]\times [\frac 2 5,\frac 3 5]$:
    \begin{itemize}
        \item $\hat {z}_1([\frac 1 2,1]\times [\frac 2 5,\frac 3 5])\subset \Sym \underline L_{v_1}$
        \item for $\frac 2 5\leq t \leq \frac 3 5$, $\hat {z}_1(1,t)=x$
        \item for $\frac 1 2 \leq s \leq 1$, $\hat {z}_1(s,\frac 2 5)=\hat {y}_1(2s-1,1)$
        \item for $\frac 1 2 \leq s \leq 1$, $\hat {z}_1(s,\frac 3 5)=\hat y'_1(2s-1,0)$
    \end{itemize}
    \item On $[\frac 1 2,1]\times [\frac 3 5,\frac 4 5]$, $\hat {z}_1(s,t)=\hat y'_1(2s-1,5t-3)$
    \item On $[\frac 1 2,1]\times [\frac 4 5,1]$:
    \begin{itemize}
        \item $\hat {z}_1([\frac 1 2,1]\times [\frac 4 5,1])\subset \Sym \underline L_{v_1}$
        \item for $\frac 4 5\leq t \leq 1$, $\hat {z}_1(1,t)=x$
        \item for $\frac 1 2\leq s \leq 1$, $\hat {z}_1(s,\frac 4 5)=\hat y'_1(2s-1,1)$
        \item $\hat {z}_1(\frac 1 2,1) = z(1)$
    \end{itemize}
    \item On $[0,\frac 1 2]\times [0,1]$, $\hat {z}_1$ is a homotopy of braids between $z$ and $\hat {z}_1(\frac 1 2,\cdot)$, contained in $\Sym^{k+g}(\Sigma_{g,p})$. Indeed, since $b$ is a group homomorphism, those two paths are isotopic as braids.
\end{itemize}

Now, the capping $\hat {z}_1$ consists of a concatenation of $\hat {y}_1$, $\hat y'_1$, homotopies contained in $\Sym\underline L_{v_1}$, and a braid isotopy contained in $\Sym^{k+g}(\Sigma_{g,p})$. Homotopies contained in $\Sym^{k+g}(\Sigma_{g,p})$ do not contribute to the difference of symplectic area, and as $\Sym\underline L_{v_1}$ is away from the diagonal, and (by definition) a braid isotopy does not cross the diagonal, the only contribution to the difference of action comes from $\hat {y}_1$ and $\hat y'_1$. Since the intersection number and the symplectic area are additive, we get that $f_{v_1,v_2}(\varphi\psi) = f_{v_1,v_2}(\varphi)+f_{v_1,v_2}(\psi)$. Moreover, since braid isotopies do not contribute to the action difference, $f_{v_1,v_2}(\varphi)$ only depends on the braid type of $\varphi$.
\end{proof}

Therefore, it is enough to compute the value of $\mathfrak f_{v_1,v_2}$ on the generators of $\mathcal B_{\underline L}$ to express $f_{v_1,v_2}(\varphi)$ (granted that we know how to decompose the braid type of $\varphi$ as a product of generators).

\begin{lem}
    The values of $\mathfrak f_{v_1,v_2}$ are the following:\begin{align*}
    \mathfrak f_{v_1,v_2}(a_i)=2\frac {\eta_2-\eta_1}{k+g}=-f_{v_1,v_2}(b_i^{-1}a_ib_i),\,\, \,\mathfrak f_{v_1,v_2}(\sigma_j)=-\frac {\eta_2-\eta_1}{k+g},\\\mathfrak f_{v_1,v_2}(z_j)=\frac {s_{2,j}-s_{1,j}}{k+g}=-2(\eta_2-\eta_1)\frac{k+2g-1}{k+g}\frac {s_{2,j}-s_{1,j}}{s_2-s_1}
    \end{align*}
\end{lem}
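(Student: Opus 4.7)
The plan is to compute $\mathfrak{f}_{v_1,v_2}(g)$ directly on each of the four families of generators of $\mathcal{B}_{\underline L}$, using Lemma \ref{lem:action_difference}. Combining that lemma with the observation that the bijective correspondence $\Phi$ preserves the topological intersection number with $\Delta$, for any representative $\varphi\in\Ham_{\underline L,c}(\Sigma_{g,p})$ of a generator $g$ and any capped chord $[\hat y_1]$ with image $[\hat y_2]:=\Phi([\hat y_1])$,
\[
\mathfrak{f}_{v_1,v_2}(g) = \frac{1}{k+g}\bigl[(\eta_{s_2}-\eta_{s_1})\,[\hat y]\cdot\Delta + \omega_{s_2}([\hat y_2])-\omega_{s_1}([\hat y_1])\bigr].
\]
Applying Proposition \ref{prp:monotonicity} to $\Sigma_g(1+s_1)$ and $\Sigma_g(1+s_2)$ (which share the same $\lambda$), the right-hand side is invariant under recapping by a spherical class. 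Thus for each generator it is enough to produce a convenient Hamiltonian representative, identify one capped intersection point, and read off the two contributions.

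For $\sigma_j$ I would use the clockwise $\pi$-rotation supported in a small disc containing $L_j\cup L_{j+1}$; the resulting chord admits a capping contained in $\Sym^{k+g}\Sigma_{g,p}$, so the area difference vanishes, and the local analysis of Section \ref{sec:intersections} provides a single elementary intersection with $\Delta$, of sign $-1$ due to the clockwise convention for $\sigma_j$. For $z_j$ I would use a full rotation of $P_1$ in an annular collar of $\zeta_j$; the capping then covers precisely the disc glued at $\zeta_j$ (of area $s_{i,j}$ in $\Sigma_g(1+s_i)$) and involves no strand crossing, giving $[\hat y]\cdot\Delta=0$ and $\omega_{s_2}-\omega_{s_1}=s_{2,j}-s_{1,j}$. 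The identity $s_2-s_1=-2(k+2g-1)(\eta_2-\eta_1)$ then rewrites the answer in the second form stated for $z_j$.

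For the handle generators $a_i$ and $b_i^{-1}a_ib_i$ the representative drags $P_1$ once around a loop parallel to $\alpha_i=L_{k+i}$ through the $i$-th handle. Since $\alpha_i$ remains non-contractible in $\Sigma_g(1+s_i)$, the loop is not capped by a disc in $\Sigma_g$; the key observation is that the homology class of $a_i$ coincides with $[\alpha_i]\in H_1(\Sigma_g)$, which is already realised by the Lagrangian factor $L_{k+i}$. There is therefore a 2-chain in $\Sym^{k+g}\Sigma_{g,p}$ with boundary in $\Sym\underline L$ that transfers the winding of $P_1$ onto a loop carried by $L_{k+i}$. This transfer forces the first and $(k+i)$-th strands to pass through each other exactly twice, producing two elementary intersections with $\Delta$; the sign is positive for $a_i$ and negative for $b_i^{-1}a_ib_i$ because the two loops encircle $\alpha_i$ from opposite sides (compare Figure \ref{fig:b-1ab_generator}). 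The capping avoids the glued discs, so the area contribution vanishes, yielding the claimed values.

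The main obstacle is the last step: constructing the explicit capping for $a_i$ and $b_i^{-1}a_ib_i$ with controlled diagonal multiplicity $\pm 2$ and verifying the signs. This is handled by a careful application of the elementary intersection analysis of Section \ref{sec:intersections}, together with the observation that moving a strand of multiplicity one around a non-contractible cycle in a symmetric product must involve two coincidence events with a companion strand realising the cycle, one per direction of the round trip. The other three cases reduce to transparent area bookkeeping and one elementary intersection count; once the values on the generators are established, the formulas propagate because $\mathfrak{f}_{v_1,v_2}$ is a group homomorphism.
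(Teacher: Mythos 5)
Your proposal follows essentially the same route as the paper: invoke Lemma \ref{lem:action_difference} to reduce to a single convenient representative and capping per generator, exhibit homotopies inside $\Sym^{k+g}\Sigma_{g,p}$ for $\sigma_j$, $a_i$, $b_i^{-1}a_ib_i$ so that only the (elementary, transverse) diagonal intersections of Section \ref{sec:intersections} contribute, and for $z_j$ sweep the glued disc so that only the area difference $s_{2,j}-s_{1,j}$ contributes, with the monotonicity relation $\eta_2-\eta_1=\frac{s_1-s_2}{2(k+2g-1)}$ giving the second form. The paper carries out the intersection counts ($-1$ for $\sigma_j$, $\pm 2$ for the handle generators) by explicit pictures rather than your verbal ``one crossing per direction of the round trip'' argument, but the substance is the same.
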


\begin{proof}
    To do these computations, according to Lemma \ref{lem:action_difference} it is enough to produce explicit homotopies between the reference path $x$ (which is a trivial braid) and the braid for which we know how to compute the action difference. For $a_i$, $b_i^{-1}a_ib_i$ and $\sigma_j$, we exhibit homotopies contained in $\Sigma_{g,p}$, so that the only contribution to the action difference comes from intersections with the diagonal. We also choose our homotopies so that the intersections with the diagonal are transverse. To be sure of their transversality, we make use the content of Section \ref{sec:intersections}. We are going to show there that the intersections we count here are transverse, and give their signs.
    
    \begin{figure}
	\centering
	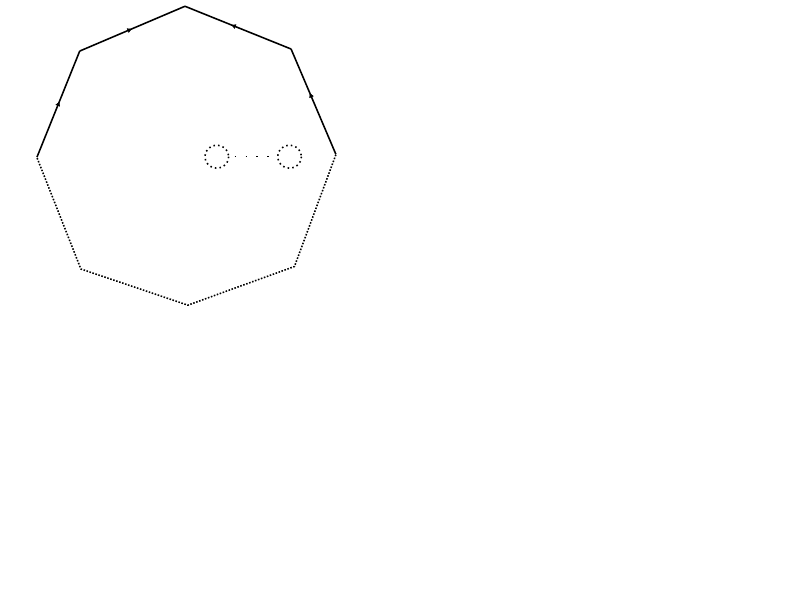
	\caption{Homotopy between $a_i$ and the constant path}
	\label{fig:a_homotopy}
    \end{figure}

    \begin{figure}
	\centering
	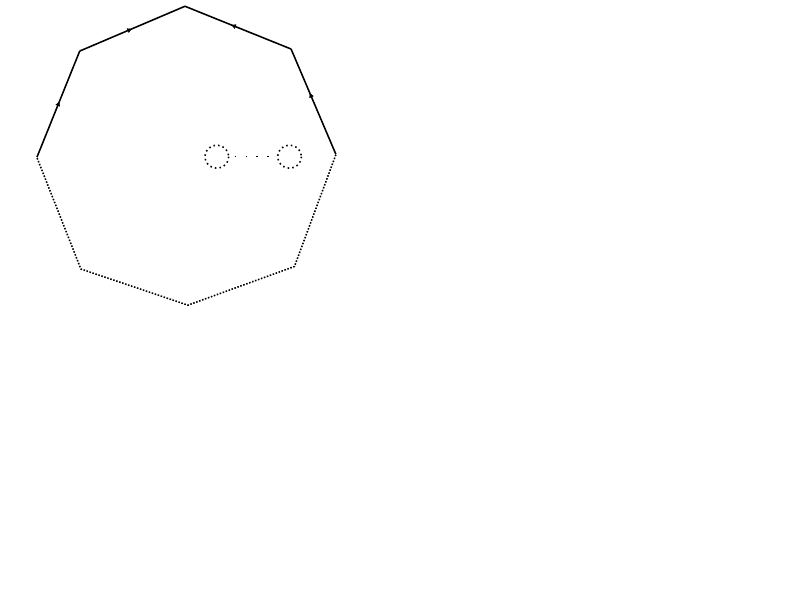
	\caption{Homotopy between $b_i^{-1}a_ib_i$ and the constant path}
	\label{fig:b-1ab_homotopy}
    \end{figure}

    \begin{figure}
	\centering
	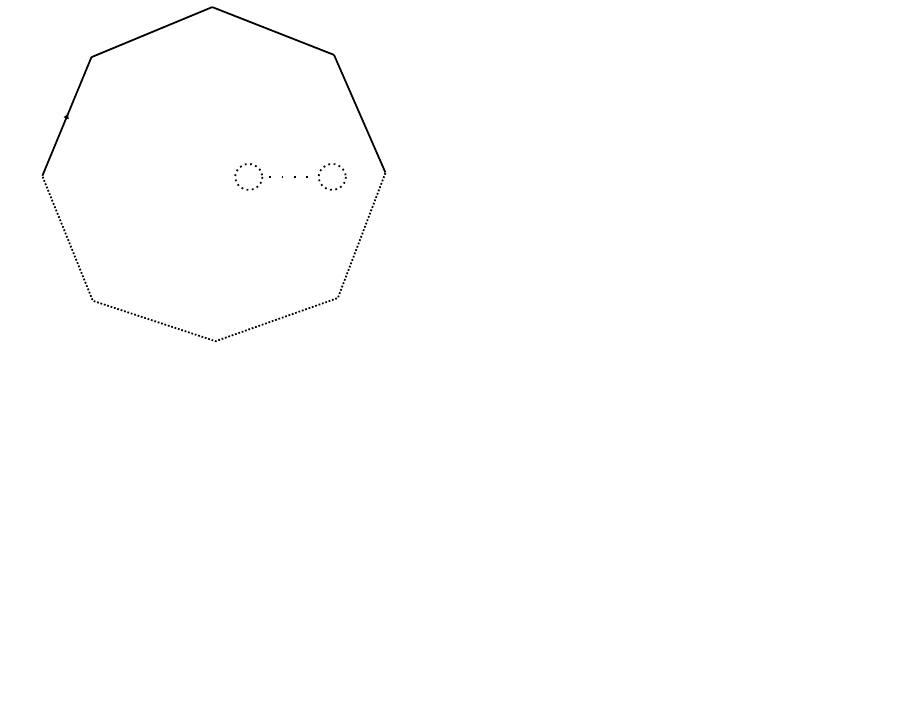
	\caption{Homotopy between $\sigma_j$ and the constant path}
	\label{fig:sigma_homotopy}
    \end{figure}

    The result comes from counting such intersections (with sign) on Figures \ref{fig:a_homotopy}, \ref{fig:b-1ab_homotopy} and \ref{fig:sigma_homotopy}. All the intersections appearing in those homotopies are modelled on the example of Figure \ref{fig:elementary intersection}, therefore we know they are transverse and can compute their signs.

    For $z_j$, the braid becomes trivial after embedding $\Sigma_{g, p}$ into $\Sigma_g(1+s_i)$, and we can choose an isotopy to the trivial braid (without crossing) which sweeps the disc glued to the boundary component $\zeta_j$. Therefore, the only contribution to the action difference comes from the difference of symplectic area between the two embeddings:
    \[\mathfrak f_{v_1,v_2}(z_j)=\frac {s_{2,j}-s_{1,j}}{k+g}\]
    By the monotonicity property applied to the only non-contractible connected component of $\Sigma_g(1+s_i)$, we have
    \[A = 1+s_i - kA + 2\eta_{i}(k+2g-1)\]
    and therefore $\eta_{2}-\eta_{1}=\frac {s_1 - s_2}{2(k+2g-1)}$, which implies that
    \[\mathfrak f_{v_1,v_2}(z_j)=-2(\eta_2-\eta_1)\frac{k+2g-1}{k+g}\frac {s_{2,j}-s_{1,j}}{s_2-s_1}\]
\end{proof}
\begin{rem}
    With this last expression for $\mathfrak f_{v_1,v_2}(z_j)$, it is easy to check that the values of $\mathfrak f_{v_1,v_2}$ are consistent with the relation of Remark \ref{rem:relation}.
\end{rem}
\begin{rem}
The reader might be surprised by the fact that \begin{equation*}
    \mathfrak{f}_{(v_1, v_2)}(b_i a_i b_i^{-1})\neq \mathfrak{f}_{(v_1, v_2)}(a_i)
\end{equation*}Now, remark that the $\mathfrak f_{(v_1, v_2)}$ are not defined on the $b_i$, which indeed are not elements in $\mathcal{B}_{\underline L}$.
\end{rem}

By the Hofer-Lipschitz property of link spectral invariants, we have that for all $\varphi\in\Ham_{\underline L}(\Sigma_{g,p})$, and for any choice of $(v_1,v_2) = (s_{i,j})_{i=1,2;1\leq j \leq p}\in V\times V$:

\[\norm{\varphi}\geq\frac 1 2 \abs{\mathfrak f_{v_1,v_2}(\varphi)}\]

To get the best estimate, we compute the maximum of $\abs{\mathfrak f_{(s_{i,j})}(\varphi)}$ over the choice of $(v_1,v_2)$.
We have 
\[(k+g)\mathfrak f_{v_1,v_2}(\varphi)=\frac {s_1-s_2}{2(k+2g-1)}(2k_{gen}-k_{\sigma})+\sum\limits_{j=1}^{p-1}k_j(s_{2,j}-s_{1,j})\]
where we have decomposed $b(\varphi)$ as a product of the generators $a_i$, $(b_i^{-1}a_ib_i)^{-1}$, $\sigma_j$ and $z_j$, and $k_{gen}$ is the sum of the exponents of all the $a_i$ and $(b_i^{-1}a_ib_i)^{-1}$ in this decomposition; $k_{\sigma}$ is the sum of the exponents of all the $\sigma_j$, and $k_j$ is the sum of the exponents of $z_j$.

\begin{lem}
    $k_{gen}$, $k_\sigma$ and the $k_j$ do not depend on the decomposition of $b(\varphi)$ as a product of generators.
\end{lem}

\begin{proof}
    $k_{gen}$, $k_\sigma$ and the $k_j$ can be seen as well-defined group homomorphisms $\mathcal B_{\underline L}\to\R$. Indeed, one can define them on the free group generated by the $a_j$'s, $b_j^{-1}a_jb_j$'s, $\sigma_i$'s and $z_l$'s, and check that they vanish on the set of relations defining $\mathcal B_{\underline L}$ (see \cite[Theorem 1.1]{bel04} for the full list of relations). Therefore, they descend to group homomorphisms $\mathcal B_{\underline L}\to\R$ and hence do not depend on the choice of decomposition of $b(\varphi)$ as a product of generators.
\end{proof}

Observe that this expression is homogeneous in $(v_1,v_2)$, i.e. $|\mathfrak f_{\kappa v_1,\kappa v_2}(\varphi)|=\abs{\kappa}\abs{\mathfrak f_{v_1,v_2}(\varphi)}$. Therefore, the maximum has to be attained when $s_1$ or $s_2$ is maximal, i.e. equal to $(k+1)A-1$. Since permuting $v_1$ and $v_2$ does not change $\abs{\mathfrak f_{v_1,v_2}(\varphi)}$, we can assume that $0\leq s_1\leq s_2=(k+1)A-1$. Let $k_{max}:= \max\{k_j\}$, $k_{min}:=\min\{k_j\}$, and let $j_{max}$ and $j_{min}$ be indices such that $k_{j_{max}}=k_{max}$, and $k_{j_{min}}=k_{min}$. Then,

\begin{align*}
    (k+g)\mathfrak f_{v_1,v_2}(\varphi)\leq \frac {s_1-s_2}{2(k+2g-1)}(2k_{gen}-k_{\sigma})+\sum\limits_{s_{2,j}-s_{1,j}\geq 0}k_{max}(s_{2,j}-s_{1,j})+\\+\sum\limits_{s_{2,j}-s_{1,j}<0}k_{min}(s_{2,j}-s_{1,j})
    \leq \frac {s_1-s_2}{2(k+2g-1)}(2k_{gen}-k_{\sigma})+s_2 k_{max} - s_1 k_{min}\\
    = s_2\left(k_{max}-\frac {2k_{gen}-k_{\sigma}}{2(k+2g-1)}\right)+s_1\left(\frac {2k_{gen}-k_{\sigma}}{2(k+2g-1)}-k_{min}\right)
\end{align*}
with equality when $s_{1,j}=\delta_{j,j_{min}}s_1$ and $s_{2,j}=\delta_{j,j_{max}}s_2$.

Similarly,
\begin{align*}
    (k+g)\mathfrak f_{v_1,v_2}(\varphi)\geq \frac {s_1-s_2}{2(k+2g-1)}(2k_{gen}-k_{\sigma})+\sum\limits_{s_{2,j}-s_{1,j}\geq 0}k_{min}(s_{2,j}-s_{1,j})+\\+\sum\limits_{s_{2,j}-s_{1,j}<0}k_{max}(s_{2,j}-s_{1,j})\geq \frac {s_1-s_2}{2(k+2g-1)}(2k_{gen}-k_{\sigma})+s_2 k_{min} - s_1 k_{max}\\
    = s_2\left(k_{min}-\frac {2k_{gen}-k_{\sigma}}{2(k+2g-1)}\right)+s_1\left(\frac {2k_{gen}-k_{\sigma}}{2(k+2g-1)}-k_{max}\right)
\end{align*}
with equality when $s_{1,j}=\delta_{j,j_{max}}s_1$ and $s_{2,j}=\delta_{j,j_{min}}s_2$.
Since those expressions are linear in $s_1$, the extremal values are attained for $s_1=0$ or $s_1=s_2$. Set:
\begin{equation*}
    R= k_{max}-k_{min},\, S=k_{max}-\frac {2k_{gen}-k_{\sigma}}{2(k+2g-1)},\, T=\frac {2k_{gen}-k_{\sigma}}{2(k+2g-1)}-k_{min} 
\end{equation*}
In the end, we get:
\begin{align}\label{eqn:f}\max\limits_{(v_1,v_2)\in V^2}\abs{\mathfrak f_{v_1,v_2}(\varphi)}=\frac{(k+1)A-1}{k+g}\max\left\{R, S, T \right\}
\end{align}
and therefore
\begin{align*}
    \norm{\varphi}\geq \frac {(k+1)A-1}{2(k+g)}\max\left\{R, S, T \right\}
\end{align*}

\section{Hofer norms for  braid groups on surfaces with boundary}\label{sec:chenAdapt}

Chen, in \cite{chen23}, proved non-degeneracy of the pseudonorms defined by the first author in \cite{M}. We aim to show how Chen's proof may be adapted to extend his theorem to:
\begin{thm}\label{thm:non-degeneracy}
    Let $\underline{L}$ be a pre-monotone link on $\Sigma_{g, p}$, a compact symplectic surface of genus $g$ and with $p$ boundary components. Then there exists an $\varepsilon>0$, only depending on $\underline{L}$ , such that if $\varphi, \psi\in\Ham_{\underline{L}}(\Sigma_{g, p})$ are such that $d_H(\varphi, \psi)<\varepsilon$, then $b(\varphi)=b(\psi)$.
\end{thm}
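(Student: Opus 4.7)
Setting $\phi := \varphi\psi^{-1}$, we have $\|\phi\| < \varepsilon$ and the goal is to show $b(\phi) = 1$ whenever $\varepsilon$ is smaller than some explicit constant depending only on $\underline{L}$. Theorem \ref{thm:braidPersistenceGenus} applied directly gives $|\mathfrak{f}_{(v_1,v_2)}(b(\phi))| < 2\varepsilon$ for all admissible $(v_1,v_2)$, but as explicitly noted earlier in the paper, the family $\mathfrak{f}_{(v_1,v_2)}$ fails to separate all elements of $\mathcal{B}_{\underline L}$ as soon as $k+g \geq 2$. The plan, adapting Chen's approach in \cite{chen23}, is to apply the main theorem not just to $\underline{L}$ but to a whole family of premonotone \emph{enrichments} of $\underline{L}$, obtained by adding auxiliary parallel circles to subdivide the discs $B_j$ (and similarly by adding circles parallel to the non-contractible $L_{k+i}$), rescaling $\lambda$ so that the enriched link remains premonotone in the sense of Definition \ref{defn:pre-monLink}. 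Each enrichment $\underline{L}'$ yields its own Hofer-Lipschitz homomorphisms; together, these detect every generator of $\mathcal{B}_{\underline{L}}$ because any non-trivial braid must, under some sufficiently fine enrichment, cross a newly added circle and contribute non-trivially through the formula \eqref{eqn:f}.

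\textbf{Approximation step.} The subtlety is that $\phi$ preserves $\underline{L}$ but generically not $\underline{L}'$. To bypass this, one approximates $\phi$ in Hofer norm by an element $\phi' \in \Ham_{\underline{L}'}(\Sigma_{g,p})$: since each auxiliary circle of $\underline{L}'$ can be chosen arbitrarily close (as a submanifold, and through an isotopy of arbitrarily small Hofer energy) to a component of $\underline{L}$ that $\phi$ does preserve, the cost $\|\phi - \phi'\|$ can be made arbitrarily small. Moreover, the natural inclusion $\mathcal{B}_{\underline{L}} \hookrightarrow \mathcal{B}_{\underline{L}'}$ sends $b(\phi)$ to $b'(\phi')$, so that applying Theorem \ref{thm:braidPersistenceGenus} to the enriched data $(\underline{L}', \phi')$ provides constraints of the form $|\mathfrak{f}'_{(v_1,v_2)}(b'(\phi'))| < 2(\varepsilon + \delta)$ for arbitrarily small $\delta > 0$. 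Since the collection of such homomorphisms, ranging over all admissible enrichments, separates the image of the natural inclusion, this forces $b(\phi)$ to be trivial provided $\varepsilon$ is smaller than half the infimum of the non-zero absolute values of the $\mathfrak{f}'_{(v_1,v_2)}$ across the family of enrichments considered.

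\textbf{Main obstacle.} The hard part is establishing that this infimum is uniformly bounded below by an explicit positive constant depending only on $\underline{L}$. This is where Chen's combinatorial argument enters: one must verify that the formula \eqref{eqn:f} for $\mathfrak{f}'$ produces values bounded below uniformly, which in turn requires a careful choice of how the enrichments are constructed so as to keep the ratio $\frac{(k'+1)\lambda' - 1}{k' + 2g - 1}$ (appearing as the characteristic scale in \eqref{eqn:f}) bounded away from zero as $k'$ grows. A secondary difficulty specific to $g \geq 1$ is handling the generators $a_i$ and $b_i^{-1}a_ib_i$, which cannot be detected by enrichments of the disc pieces alone; these require introducing additional non-contractible auxiliary circles parallel to the $L_{k+i}$ and reapplying Proposition \ref{prp:monotonicity} in the generalised form discussed in Section \ref{sec:monotonicity} to confirm monotonicity of the enlarged link. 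Once these uniform lower bounds are in place, the conclusion $b(\phi)=1$ follows as above, yielding the required $\varepsilon$.
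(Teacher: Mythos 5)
Your proposal does not follow the paper's argument, and it contains a gap that I do not see how to repair. The central step is the claim that the collection of homomorphisms $\mathfrak f'_{(v_1,v_2)}$, taken over all premonotone enrichments $\underline L'$ of $\underline L$, ``separates the image of the natural inclusion'' and therefore forces $b(\phi)=1$. Every $\mathfrak f'_{(v_1,v_2)}$ is a group homomorphism into $\R$, and the map $\mathcal B_{\underline L}\to\mathcal B_{\underline L'}$ you invoke (a cabling/inclusion of braid groups) is also a group homomorphism; hence each composite $\mathcal B_{\underline L}\to\R$ factors through the abelianisation of $\mathcal B_{\underline L}\cong\mathcal B_{k,0,p+2g}$. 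No family of such maps, however large, can detect elements of the commutator subgroup: for instance $\sigma_1\sigma_2^{-1}$ (for $k\geq 3$), or any genuine commutator of generators, lies in the common kernel of all of them, for every enrichment. Being ``non-trivial on every generator'' is not the same as separating elements of a non-abelian group, and this is precisely why the paper states that the estimates of Theorem \ref{thm:braidPersistenceGenus} alone cannot yield non-degeneracy when $k+g\geq 2$. A secondary problem is the approximation step: to transfer information from $b'(\phi')$ back to $b(\phi)$ you would need to know that Hofer-small corrections do not change the braid type, which is essentially the statement being proved, so the argument as written is also circular at that point.

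The paper's proof (following \cite{chen23}) is of a different nature and does not rely on the homomorphisms $\mathfrak f_{(v_1,v_2)}$ at all. One takes $\mathcal C^0$-close generating Hamiltonians for $\varphi$ and $\psi$, glues discs to the boundary so that $\underline L$ becomes monotone, and decomposes the Floer differential and the continuation map according to their intersection numbers with the diagonal $\Delta$ and with the divisors $D_i=\zeta_i+\Sym^{k+g-1}(\Sigma_g)$ centred in the glued discs. The pieces $\partial_{00}$ and $h_{00}$, which count curves avoiding $\Delta$ and all $D_i$, still define a complex and a chain homotopy equivalence when the Hofer distance is small (this is where monotonicity, Proposition \ref{prp:monotonicity}, enters); a pseudo-holomorphic curve contributing to $h_{00}$, glued to a disc inside $\Sym^{k+g}\underline L$, then produces an explicit braid isotopy between representatives of $b(\varphi)$ and $b(\psi)$. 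This detects arbitrary braids, not only their images in the abelianisation, which is exactly what your approach cannot do.
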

This theorem in turn implies that the pseudonorms we defined are non-degenerate.
\begin{cor}
    If $(\varphi_i)\subset \Ham_{\underline{L}}(\Sigma_{g, p})$ is a sequence of Hamiltonian diffeomorphisms preserving $\underline{L}$, and $\norm{\varphi_i}\to 0$, then there exists a positive integer $n$ such that for every $i>n$ the braid $b(\varphi_i)$ is trivial.
\end{cor}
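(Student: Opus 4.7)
The corollary follows directly from Theorem \ref{thm:non-degeneracy}: for $i$ sufficiently large we have $\norm{\varphi_i} = d_H(\varphi_i,\mathrm{id}) < \varepsilon$, and applying the theorem with $\psi=\mathrm{id}$ gives $b(\varphi_i) = b(\mathrm{id}) = e$. The substantive content is in the theorem, whose proof strategy I sketch below.

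For Theorem \ref{thm:non-degeneracy}, by bi-invariance of the Hofer metric together with the homomorphism property of $b$, it suffices to produce $\varepsilon>0$ depending only on $\underline{L}$ such that $\norm{\varphi}<\varepsilon$ forces $b(\varphi)$ to be trivial (apply the conclusion to $\varphi\psi^{-1}$). The plan is to adapt Chen's argument from \cite{chen23}. Chen's approach in the disc case builds a sufficiently rich family of Hofer-Lipschitz functions on the braid group by varying the underlying Lagrangian configuration (sub-links, perturbations of the areas, choices of complex structures), and then shows that the joint vanishing of all members of this family characterises the trivial braid. A small-Hofer-norm element must therefore lie in the common kernel and hence be trivial in $\mathcal B_{\underline L}$.

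The adaptation proceeds in two steps. First, the family $\mathfrak f_{(v_1,v_2)}$ constructed in Section \ref{sec:constrFunction} already provides non-zero values on each individual generator of $\mathcal B_{\underline L}$, as shown in the lemma computing $\mathfrak f_{(v_1,v_2)}(\sigma_j)$, $\mathfrak f_{(v_1,v_2)}(a_i)$, $\mathfrak f_{(v_1,v_2)}(b_i^{-1}a_ib_i)$ and $\mathfrak f_{(v_1,v_2)}(z_j)$. Second, following Chen, enlarge this family by considering pre-monotone sub-links of $\underline L$: using the remark after Proposition \ref{prp:monotonicity}, one obtains many new pre-monotone links on $\Sigma_{g,p}$ by deleting some of the contractible circles and readjusting the areas consistently with the monotonicity equation. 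Each such sub-link produces its own parametric family of Hofer-Lipschitz homomorphisms on the ambient group $\Ham_{\underline L}(\Sigma_{g,p})$, and the joint use of all of them, exactly as in Chen, is expected to separate every non-trivial braid from the identity with a quantitative lower bound depending only on $\underline L$.

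The main obstacle will be that, unlike in the disc case, the non-contractible components of $\underline L$ cannot be removed when passing to sub-links without destroying Hamiltonian non-displaceability, so the parameter space of auxiliary configurations is more restricted. The K\"unneth isomorphism of Section \ref{sec:kunneth} is the key tool for getting around this: by decomposing $\Sigma_{g,p}$ as a connected sum of $\Sigma_{0,p+2g}$ with $g$ tori, each carrying one non-contractible circle, the Floer complexes (and hence the relevant spectral invariants) factor compatibly with the action filtration. The analysis on the planar piece then falls directly under Chen's argument, while the torus factors contribute trivially thanks to the identification $\mathcal B_{\underline L}\cong\mathcal B_{k,0,p+2g}$ of Lemma \ref{lem:imb}. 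Granted this reduction, non-degeneracy on $\mathcal B_{\underline L}$ follows from Chen's non-degeneracy on the braid group of the punctured sphere, which completes the proof of the theorem and hence of the corollary.
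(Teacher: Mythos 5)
Your derivation of the corollary itself is correct and is exactly the paper's (implicit) argument: since $\norm{\varphi_i}=d_H(\varphi_i,\mathrm{id})\to 0$, eventually $d_H(\varphi_i,\mathrm{id})<\varepsilon$ and Theorem \ref{thm:non-degeneracy} with $\psi=\mathrm{id}$ gives $b(\varphi_i)=b(\mathrm{id})$, the trivial braid. Nothing more is needed for the corollary.

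However, the strategy you sketch for Theorem \ref{thm:non-degeneracy} itself misrepresents Chen's method and would not work. You propose to prove non-degeneracy by enlarging the family of Hofer--Lipschitz homomorphisms (via sub-links, area perturbations, the K\"unneth splitting) until their joint vanishing characterises the trivial braid. The paper explicitly rules this out: right after Corollary \ref{cor:hofNormBraids} it states that the estimate coming from the family $\mathfrak f_{(v_1,v_2)}$ ``is not enough to conclude non-degeneracy as soon as $k+g\geq 2$.'' These functions are group homomorphisms $\mathcal B_{\underline L}\to\R$, so they all vanish on the commutator subgroup, and no enlargement of this kind can separate a non-trivial commutator from the identity. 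Chen's actual argument, and the paper's adaptation of it, is of a different nature: one shows that if $\varphi$ and $\psi$ are Hofer-close, their generating Hamiltonians can be taken close, the pseudo-holomorphic curves contributing to the continuation map can be forced to have zero intersection with the diagonal $\Delta$ and with the divisors $D_i$ (this uses monotonicity, Proposition \ref{prp:monotonicity}, and positivity of intersection), and gluing such a curve with a disc inside $\Sym^{k+g}\underline L$ produces an \emph{explicit braid isotopy} between representatives of $b(\varphi)$ and $b(\psi)$. So the non-degeneracy comes from a geometric construction of an isotopy, not from a separating family of homomorphisms. If you intend your write-up to stand on its own, replace the second through fourth paragraphs with this continuation-map argument (or simply cite the theorem as a black box, since the corollary only needs its statement).
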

We are now going to adapt Chen's proof to our setting.

In \cite{chen23}, the author considers Hofer-close Hamiltonian diffeomorphisms $\varphi$ and $\psi$. Choosing generating Hamiltonians for both of them and perturbing them gives rise to the associated Floer complexes, and continuation maps between them. If the generating Hamiltonians are $\mathcal{C}^0$-close, one may furthermore prove that the pseudo-holomorphic curves appearing in the continuation maps do not intersect the diagonal of the symmetric product, nor the divisor $D=z+\Sym^{k-1}(\sphere^2)$, where $z$ is the north pole of the sphere (outside the image of the embedding $\D\hookrightarrow \sphere^2$). The fact that the two Hamiltonians may be chosen to be close of course follows from the definition of the Hofer norm. One may then use these pseudo-holmorphic curves to produce a braid isotopy between chosen representatives of $b(\varphi)$ and $b(\psi)$ using the pseudo-holomorphic curves of the continuation maps.

In our case we wish to prevent pseudo-holomorphic curves from intersecting the diagonal and exiting the surface $\Sigma_{g, p}$: we glue discs of appropriate areas to the boundary components, so that $\underline{L}$ becomes monotone, and consider the centres $\zeta_i$, $i=1,\dots, p$ of the discs we just glued. Define the divisors $D_i=\zeta_i+\Sym^{k+g_1}(\Sigma_g)$: if $u:\D\rightarrow \Sym^k(\Sigma_g)$ is a pseudoholomorphic disc with Lagrangian boundary conditions on the link $\underline{L}$, if $[u]\cdot D_i=0$ for each $i$ then the image does not leave the image of the embedding $\Sigma_{g, p}\hookrightarrow \Sigma_g$, by positivity of the intersections between (pseudo)holomorphic submanifolds.

As in \cite{chen23}, we then decompose the differential in the Floer complex as a sum of contributions, each of them counting pseudo-holomorphic discs with fixed intersection number with the diagonal $\Delta$ and the divisors $D_i$. The function $\partial_{00}$, counting pseudo-holomorphic discs not intersecting $\Delta$ or any of the divisors $D_i$ is a differential, and we consider the complex $(CF(\underline{L}, H), \partial_{00})$. As Chen points out, it is possible to do the same with continuation maps: if $K$ is a compactly supported Hamiltonian generating $\psi\in \Ham_{\underline{L}}(\Sigma_{g, p})$, let $h: CF(\underline{L}, H)\rightarrow CF(\underline{L}, K)$ be the continuation map associated to a regular homotopy between $H$ and $K$. One may consider $h_{00}$, obtained from $h$ counting contribution of pseudo-holomorphic curves not intersecting $\Delta$ or any $D_i$: it turns out to be a chain homotopy  between the two complexes $(CF(\underline{L}, H), \partial_{00})$ and $(CF(\underline{L}, K), \partial_{00})$. The arguments used in \cite{chen23} to prove existence of pseudo-holomorphic curves contributing to $h_{00}$ (assuming that $\norm{H-K}_{(1, \infty)}$ is small) carry over to our case: they in fact either involve local considerations around the Lagrangians, or use monotonicity of the link which we have now proved (see Proposition \ref{prp:monotonicity}). As a consequence, one may follow through the proof of Theorem 1 in \cite{chen23}, which provides the braid isotopy we are looking for: such an isotopy is given by gluing a pseudo-holomorphic curve contributing to $h_{00}$ with a disc whose image does not leave the torus $\Sym^{k+g}\underline{L}$. This completes the proof of Theorem \ref{thm:non-degeneracy}.

\section{A proof using Künneth formula}
\label{sec:proof_Kunneth}
In this section we are going to present how, assuming the Hamiltonian diffeomorphisms at play are supported in a disc contained in the surface, one can prove Theorem \ref{thm:braidPersistenceGenus} combining directly the results from \cite{M} and \cite{MT}. The former reference contains a statement about the Hofer geometry on the disc, and the latter one shows how to deduce results on the Hofer geometry of a surface with genus and boundary.

Assume a disc $D$ embedded in $\Sigma_{g,p}$, ($g, p\geq 1$) contains all the contractible components of a pre-monotone link $\underline L$, and does not intersect the non-contractible ones.

Denote by $\Ham_{\underline L, D}(\Sigma_{g,p})$ the set of Hamiltonian diffeomorphisms $\varphi$ satisfying:
\begin{itemize}
    \item $\varphi(\underline L) = \underline L$;
    \item the support of $\varphi$ is contained in $D$.
\end{itemize}

We have a map $\Ham_{\underline L, D}(\Sigma_{g,p})\rightarrow \mathcal{B}_{k, 0,1}$ which is a surjective morphism of groups. 

\begin{thm}
\label{thm:inequality}
Let $\varphi\in\Ham_{\underline L, D}(\Sigma_{g,p})$; denote by $h$ its braid type. Then,
\[\norm{\varphi}\geq \frac 1 {4(k+g)}\frac {(k+1)A-1}{(k+2g-1)}\abs{\mathrm{lk}(h)}\]
\end{thm}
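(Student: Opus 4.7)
The plan is to apply the K\"unneth isomorphism of Section~\ref{sec:kunneth} iteratively to split off each handle of $\Sigma_{g,p}$, reducing the problem to the disc estimate of \cite{M}. Since $\varphi$ is supported in $D$ and $D$ is disjoint from the non-contractible components $\alpha_1,\dots,\alpha_g$ of $\underline L$, the Hamiltonian acts trivially on the torus factors produced by the splitting, so all the dynamical information concentrates in the sphere factor containing $D$.

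First I would close $\Sigma_{g,p}$ by gluing discs of total area $s_i$, $i=1,2$, forming $\Sigma_g(1+s_i)$ as in Section~\ref{sec:constrFunction}, extend $H$ by zero, and let $\underline L_{v_i}$ be the resulting $\eta_i$-monotone link, with $\eta_2-\eta_1=(s_1-s_2)/(2(k+2g-1))$. Iterating the K\"unneth formula $g$ times, and choosing each connected-sum point in the pair-of-pants region $B_{k+1}$ away from $\underline L$, $D$, and the remaining non-contractible circles, yields a filtered chain isomorphism
\[
CF(\underline L_{v_i},H_{v_i})\xrightarrow{\sim} CF(\underline L'_{s_i},H_{s_i})\otimes\bigotimes_{j=1}^g CF(\alpha_j,\alpha_j'),
\]
where $\underline L'_{s_i}$ is the image of the contractible circles on a sphere factor $S_i$ of area $(1+s_i)+4g\eta_i$ containing $D$.

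Because $H$ vanishes on each torus factor, the unit of $CF(\alpha_j,\alpha_j')$ is represented by a constant chord with trivial capping and zero action. The unit of $HF(\underline L_{v_i},H_{v_i})$ therefore corresponds under K\"unneth to the unit on $S_i$ tensored with these trivial torus classes, and the corresponding generator-action equals the sphere-factor action, yielding
\[
(k+g)\bigl(c_{\underline L_{v_1}}(\varphi)-c_{\underline L_{v_2}}(\varphi)\bigr)=k\bigl(c_{\underline L'_{s_1}}(\varphi)-c_{\underline L'_{s_2}}(\varphi)\bigr).
\]
The right-hand side is precisely the quantity analysed in \cite{M}: monotonicity of $\underline L'_{s_i}$ on $S_i$ lets one recap so the capping lies in $\Sym^k D$, making the symplectic-area contributions to the action difference cancel and leaving only $(\eta_2-\eta_1)\,[\hat y']\cdot\Delta$ with $[\hat y']\cdot\Delta=-2\,\mathrm{lk}(h)$. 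Optimizing $|s_1-s_2|$ over $[0,(k+1)A-1]$ and combining with the Hofer-Lipschitz estimate $\|\varphi\|\geq \frac{1}{2}|c_{\underline L_{v_1}}(\varphi)-c_{\underline L_{v_2}}(\varphi)|$ delivers the claimed bound.

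The main obstacle is verifying that $\underline L'_{s_i}$ is $\eta_i$-monotone on $S_i$, so that the disc estimate of \cite{M} applies with the correct parameters: since the area of $S_i$ is shifted to $(1+s_i)+4g\eta_i$ by the K\"unneth condition, one must check the algebraic identity
\[
(k+1)\lambda-((1+s_i)+4g\eta_i)=2\eta_i(k-1),
\]
which is equivalent to $(k+1)\lambda-(1+s_i)=2\eta_i(k+2g-1)$, the monotonicity condition on the original surface. So monotonicity transfers cleanly, and the rest is bookkeeping of constants.
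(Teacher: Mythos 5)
Your proposal follows essentially the same route as the paper's proof: embed $\Sigma_{g,p}$ into closed surfaces of two different areas, apply the K\"unneth isomorphism of Section~\ref{sec:kunneth} $g$ times to strip off the handles carrying the $\alpha_j$, observe that the spectral invariant of the full link equals $\frac{k}{k+g}$ times that of $\underline K=L_1\cup\dots\cup L_k$ on the resulting sphere, and feed the difference of the two embeddings into the disc estimate of \cite{M}; your monotonicity bookkeeping $(k+1)\lambda-(1+s_i)-4g\eta_i=2\eta_i(k-1)$ is exactly the check the paper performs implicitly. Two small points to tighten. First, on the sphere factor the Lagrangian spectral invariant is not an invariant of the time-$1$ map (only of the Hamiltonian, since $\pi_1$ of the Hamiltonian group is nontrivial there), so writing $c_{\underline L'_{s_i}}(\varphi)$ is not literally meaningful; the paper resolves this by passing to the renormalised homogenised invariants $\mu^s$ and citing \cite{CGHMSS22} for their independence of the generating Hamiltonian, and you should either do the same or keep the whole identity at the level of a fixed $H$. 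Second, with the normalisation $\mathrm{lk}(\sigma_i)=1$ used in this paper (and in the statement of the theorem), a half-twist contributes a single transverse crossing of the diagonal in the local model of Section~4.2, so the relevant relation is $[\hat y']\cdot\Delta=-\mathrm{lk}(h)$ rather than $-2\,\mathrm{lk}(h)$; as written your chain of constants would produce a bound twice as strong as the stated one, and the discrepancy is purely a linking-number convention (the factor $-2$ corresponds to the half-integer pairwise-winding convention). With those two adjustments the argument is the paper's.
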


\begin{proof}
    For $s\geq 0$, if $\Sigma_g$ has non-empty boundary, we choose a symplectic embedding of $(\Sigma_{g,p},\omega)$ into a closed surface $(\Sigma_g(s), \omega^s)$ of genus $g$ and area $1+s$, so that $\underline L$ becomes an $\eta_s$-monotone link, where $$\eta_s = \frac {(k+1)A-(1+s)}{2(k+2g-1)}$$
    We get a Hofer-Lipschitz spectral invariant $c^s_{\underline L}$, defined for a Hamiltonian $H:S^1\times \Sigma_g(s)\to\R$ by
    \[c^s_{\underline L}(H)=\frac{1}{k+g}c^s_{\Sym \underline L}(\Sym^{k+g} H)\]
    where $c^s_{\Sym \underline L}$ is the Lagrangian spectral invariant of $\Sym \underline L\subset \Sym^{k+g}(\Sigma_g(s))$.
    We write $\underline L = L_1\cup...\cup L_k\cup \alpha_1\cup...\cup \alpha_g$, where the $\alpha_i$'s are the non-contractible components, and define $\underline K:= L_1\cup...\cup L_k \subset D$.
    One can view $\Sigma_g(s)$ as a connected sum $\sphere^2\#E_1\#...\#E_g$, where the sphere $\sphere^2$ contains $D$ (and therefore $\underline K$), and the $E_i$ are copies of the 2-dimensional torus, with $\alpha_i\subset E_i$.
    Now, suppose $H$ is supported in $D$, and denote by $\underline L'$ (resp. $\underline K'$) the image of $\underline L$ (resp. $\underline K$) by $\phi^1_H$. By taking a small perturbation of $H$ sending the $\alpha_i$ to some transverse $\alpha_i'$, and applying $g$ times the K\"unneth formula \cite[Theorem 6]{MT}, we get an isomorphism of filtered chain complexes
    \[CF^*(\Sym \underline L,\Sym \underline L')\cong CF^*(\Sym \underline K, \Sym \underline K')\otimes \bigotimes\limits_{1\leq i \leq g} CF^*(\alpha_i,\alpha_i')\]
   On the right hand side, $\underline K$ and $\underline K'$ are links in $(\sphere^2,\omega^s_{\sphere^2})$, and the $\alpha_i, \alpha_i'$ are circles in $(E_i,\omega^s_i)$, where the symplectic forms satisfy the following:
   \begin{itemize}
       \item $\omega^s_{\sphere^2}$ coincides with $\omega^s$ on $D$ (which is away from the connected sum region), and has total area $1+s+4g\eta_s$;
       \item $\omega^s_i$ coincides with $\omega^s$ on a neighbourhood of the homotopy between $\alpha_i$ and $\alpha_i'$.
   \end{itemize}
   Hence, we get that $c^s_{\Sym \underline L}(\Sym^{k+g} H)=c^s_{\Sym \underline K}(\Sym^{k} H)$, and therefore
   \[c^s_{\underline L}(H)=\frac k {k+g} c^s_{\underline K}(H)\]
    where the right hand side is computed in $(\sphere^2,\omega^s_{\sphere^2})$.
    Since this holds for any $H$ supported in $D$, it holds for a Hamiltonian generating $\varphi$ and for its iterates, so we get after homogenisation (and renormalisation) \begin{align*}
        \mu^s_{\underline L}(\varphi):=\lim\limits_{n\to\infty}\left(\frac 1 n c^s_{\underline L}(H^{\#n})-n\int_{S^1}\int_{\Sigma_g,p}H_t\omega dt\right)&=\\=\frac {k}{k+g}\lim\limits_{n\to\infty}\frac 1 n c^s_{\underline K}(H^{\#n})-\int_{S^1}\int_{\sphere^2}H_t\omega dt=\frac k {k+g} &\mu^s_{\underline K}(\varphi)
    \end{align*}
    (it is shown in \cite[Section 7.3]{CGHMSS22} that the (renormalised) homogenised spectral invariants do not depend on the choice of $H$ generating $\varphi$).
    By the Hofer-Lipschitz property of spectral invariants, we obtain
    \[2\norm{\varphi}\geq\abs{ \mu^0_{\underline L}(\varphi)-\mu^{(k+1)A-1}_{\underline L}(\varphi)}=\frac k {k+g} \left(\abs{\mu^0_{\underline K}(\varphi)-\mu^{(k+1)A-1}_{\underline K}(\varphi)}\right)\]
    Now, according to \cite[Theorem 1.4]{M}, \[\abs{\mu^0_{\underline K}(\varphi)-\mu^{(k+1)A-1}_{\underline K}(\varphi)}=\frac 1 k \eta_0\abs{\mathrm{lk}(h)}=\frac 1 {2k}\frac {(k+1)A-1}{(k+2g-1)}\abs{\mathrm{lk}(h)}\]and therefore we get the theorem. 
\end{proof}

\printbibliography
\end{document}